\numberwithin{equation}{section} % Number equations within sections (i.e. 1.1, 1.2, 2.1, 2.2 instead of 1, 2, 3, 4)
\numberwithin{figure}{section} % Number figures within sections (i.e. 1.1, 1.2, 2.1, 2.2 instead of 1, 2, 3, 4)
\numberwithin{table}{section} % Number tables within sections (i.e. 1.1, 1.2, 2.1, 2.2 instead of 1, 2, 3, 4)
\theoremstyle{definition}
\newtheorem{remark}[equation]{Remark}
\newtheorem{notation}[equation]{Notation}
\crefname{notation}{notation}{notation}
\newtheorem{example}[equation]{Example}
\newtheorem{definition}[equation]{Definition}
\theoremstyle{plain}
\newtheorem{theorem}[equation]{Theorem}
\newtheorem{lemma}[equation]{Lemma}
\newtheorem{corollary}[equation]{Corollary}
\newtheorem{proposition}[equation]{Proposition}
\let\c@equation\c@figure
\newtheoremstyle{named}{}{}{\itshape}{}{\bfseries}{.}{.5em}{#1 \thmnote{#3}}
\theoremstyle{named}
\newtheorem*{theorem*}{Theorem}
\theoremstyle{named}
\newtheorem*{corollary*}{Corollary}
\DeclareMathOperator{\lcm}{lcm} %least common multiple
\newcommand{\R}{\mathbb{R}}
\newcommand{\Q}{\mathbb{Q}}
\newcommand{\C}{\mathbb{C}}
\newcommand{\N}{\mathbb{N}}
\newcommand{\Z}{\mathbb{Z}}
\newcommand{\Si}{\Sigma}
\newcommand{\G}{\Gamma}
\newcommand{\tat}{t\^ete-\`a-t\^ete }
\newcommand{\Tat}{T\^ete-\`a-t\^ete }
\newcommand{\MS}{\mathbb{S}}
\newcommand{\TG}{\widetilde{\G}}
\newcommand{\cont}{\subseteq}
\newcommand{\comp}{\circ}
\newcommand{\calC}{\mathcal{C}}
\newcommand{\calA}{\mathcal{A}}
\newcommand{\calB}{\mathcal{B}}
\newcommand{\calD}{\mathcal{D}}
\newcommand{\calU}{\mathcal{U}}
\begin{document}
\title[T\^ete-\`a-t\^ete twists]{{T\^ete-\`a-t\^ete twists, monodromies and representation of elements of Mapping Class Group}}
\author{N. A'Campo}
\address{University of Basel. Spiegelgasse 1, CH-4051 Basel, Switzerland}
\email{Norbert.ACampo@unibas.ch}
\author{J. Fern\'andez de Bobadilla}
\address{ IKERBASQUE, Basque Foundation for Science, Maria Diaz de Haro 3, 
48013, Bilbao, Basque Country, Spain
(2) BCAM - Basque Center for Applied Mathematics, Mazarredo 14, 48009 Bilbao, Basque Country,  Spain}
\email{jbobadilla@bcamath.org} 
\author{M. Pe Pereira}
\address{Facultad de Ciencias Matematicas- UCM, Plaza de Ciencias, 3,  Ciudad 
Universitaria, 28040 Madrid, Spain} 
\email{maria.pe@mat.ucm.es}
\author{P. Portilla Cuadrado}
\address{(1)ICMAT,  Campus Cantoblanco UAM, C/ Nicol\'as Cabrera, 13-15, 28049 Madrid, Spain
(2) BCAM,  Basque Center for Applied Mathematics, Mazarredo 14, 48009 Bilbao, Basque Country,  Spain}
\email{p.portilla89@gmail.com}
\thanks{{First author thanks Professors Makoto Sakuma and Ichiro Shimada for their invitation to expose about monodromy and t\^ete-\`a-t\^ete twists at the conference "Branched Coverings, Degenerations and Related Topics, 1916," at the Graduate School of Sciences, Hiroshima University. Together with second author we thank Professors Yakov Eliashberg, David Nadler and Laura Paul Starkston for their invitation to present in the conference "Arborealization of singularities of Lagrangian skeleta, 1918," at the American Institut for Mathematics, San Jose, higher dimensional symplectic extensions. } Second author supported by ERCEA 615655 NMST Consolidator Grant, MINECO by the project reference MTM2013-45710-C2-2-P, by the Basque Government through the 
BERC 2014-2017 program, by Spanish Ministry of Economy and Competitiveness MINECO: BCAM Severo Ochoa excellence accreditation SEV-2013-0323 and by Bolsa Pesquisador 
Visitante Especial (PVE) - Ci\^encia sem Fronteiras/CNPq Project number: 401947/2013-0. Part of this research was developed while being a member at IAS; he thanks IAS for 
excellent working conditions and AMIAS for support.}
\thanks{Third author is supported by MINECO by the project with reference MTM2017-89420-P and  by ERCEA 615655 NMST Consolidator Grant  and by Bolsa Pesquisador 
Visitante Especial (PVE) - Ci\^encias sem Fronteiras/CNPq Project number: 401947/2013-0.  Part of this research was developed while being supported by a Juan de la Cierva Incorporaci\'on Grant from MINECO at ICMAT,  by ICMAT Severo Ochoa excellence accreditation SEV-2015-0554 and by MINECO project MTM2013-45710-C2-2-P.  }
\thanks{Fourth author supported by SVP-2013-067644 Severo Ochoa FPI grant and by project by MTM2013-45710-C2-2-P, the two of them by MINECO; also supported by the project ERCEA 615655 NMST Consolidator Grant}
%\subjclass[2010]{20B25, 57R50, 57R52 (primary), and 32S55, 58K10 (secondary)} 

\subjclass[2010]{32S40, 57M50 (primary), and
32B10, 32S05 (secondary)}

%\commby{editor}

%Articulo

\begin{abstract}
We study monodromies of plane curve singularities and pseudo-periodic homeomorphisms of oriented
	surfaces with boundary, following an original idea of the first author: \tat graphs and
	twists. We completely characterize mapping classes that can be represented
	by \tat twists, and generalize the notion
	to be able to represent any class of the mapping class group relative to the
	boundary which is boundary-free periodic. This improves previous work on the subject by C. Graf. Furthermore, we introduce the class
	of mixed \tat graphs and twists, and prove that mixed \tat twists
	contain monodromies of %{\color{red} WHY irreducible, with Eisenbud-Neumann twist numbers are also positive  }
	irreducible plane curve singularities. In a sequel paper, the fourth author and B. Sigurdsson have extended this to the reducible case.
\end{abstract}

\maketitle

\addtocontents{toc}{\protect\sloppy}
\tableofcontents

\section{Introduction}

Max Dehn has introduced the so called Dehn twist. More precisely, given an embedded copy $\alpha$ of the circle in the interior an oriented surface $S$, he has defined a mapping class $D_\alpha$ of the surface $S$.
The mapping class $D_\alpha$ has for every open subset $U$ that contains $\alpha$ a representative $\delta_U$ with support in $U$, that leaves $\alpha$ invariant and whose restriction to $\alpha$ has order $2$.

The first author has observed that the geometric monodromy of a Pham singularity $x^a+y^b$ in two complex variables is a "twist" attached in  rather similar manner to the Pham graph $Ph_{a,b}$ in the Milnor fiber $F_{a,b}$. More precisely, the graph $\Gamma=Ph_{a,b}$ is the complete bi-coloured graph with $a$ vertices of one colour and $b$ vertices of the other colour and is embedded
into the oriented surface $F_{a,b}$ such that the
t\^ete-\`a-t\^ete property holds with respect to a metric. This property (see \Cref{def:tat}), allows to define a mapping class $D_\Gamma$ of $F_{a,b}$ with a representative that is the identity away from $\Gamma$ and whose restriction to $\Gamma$ has finite order (\Cref{def:phi_Gtau}). This class is called a \tat twist and is a generalization of Dehn twists.

A first optimism suggests that all geometric monodromies of plane curve singularities are t\^ete-\`a-t\^ete twists attached to a graph in the Milnor fiber. This is particularly nice since the \tat graph is a vanishing spine of the Milnor fibre: is the part of the Milnor fibre that collapses approaching the singular fibre. Then the vanishing spine, together with an additional metric structure codifies the geometric monodromy. However, this is only true for finite order monodromies. 

In fact it is natural to explore which mapping classes are \tat twists. This 
task is completed in the present paper, where the following result is proved 
(\Cref{cor:original_def}): given any oriented surface $\Sigma$ with boundary, 
any mapping class $\phi\in MCG(\Sigma,\partial\Sigma)$ is a \tat twist if and 
only if $\phi$ is boundary-free isotopic to a periodic homeomorphism, and the 
fractional Dehn twist coefficients (see \Cref{def:gen_rot}) at each of the 
components of the boundary are positive. In fact a more general result is 
proven (see \Cref{thm:000} and \Cref{thm:rel_signed}): by extending the 
definitions to that of \emph{signed relative \tat graphs and twits}, if 
$\partial^1\Sigma$ is a non-empty union of boundary components, any mapping 
class $\phi\in MCG(\Sigma,\partial^1\Sigma)$ is a signed relative \tat twist if 
and only if $\phi$ is boundary-free isotopic to a periodic homeomorphism. This 
results improve results of Graf~\cite{Graf1,Graf}, who proved the analogous 
realization, but using a class of graphs and twists that strictly contains 
ours. We also care of representing all periodic boundary-free mapping classes 
by \tat twists (\Cref{thm:rel}).  

The above optimism on geometric monodromies becomes a Theorem, if one works 
with graphs that satisfy the mixed t\^ete-\`a-t\^ete property 
(\Cref{def:relative_mix_sim}), and with the mapping classes defined by them, 
which are called mixed \tat twists and are also introduced in this paper. { The 
proof of this Theorem is the conjunction of the results of the second part of 
this paper and a subsequent paper by the fourth author and B. 
Sigurdsson~\cite{Bal}}. Here we prove that the monodromy of any irreducible 
plane branch is a mixed \tat twist (this is a consequence of the more general 
\Cref{thm:mixed_model}): 

{ we introduce a filtered vanishing spine on a surface, together with a metric 
structure (which is called a mixed t\^ete-\`a-t\^ete graph), associate to it a 
mixed t\^ete-\`a-t\^ete twist. In the case of irreducible plane curve 
singularities we find a mixed t\^ete-\`a-t\^ete graph on the Milnor fibre, 
whose associated mixed t\^ete-\`a-t\^ete twist is the geometric monodromy. 
In~\cite{Bal}, mixed \tat twists has been characterized as the geometric 
monodromy for an isolated curve singularity $f:X\to \mathbb{C}$ on a normal 
surface germ, covering in particular the reducible plane curve case.} 

{The plane curve case suggested the following higher dimensional generalization to the first author}. The Pham spine $Ph_{a_0, \cdots ,a_n},\, n>1,$ in the Milnor fiber of $z_0^{a_0}+ \cdots + z_n^{a_n}$ has also a t\^ete-\`a-t\^ete property if one realizes the simplices
of the spine as orthogonal spherical simplices. {
$\pi$-Walks are broken spherical geodesics }. Again the monodromy is the corresponding twist. Taking into account the natural symplectic structure on the Milnor fibre one finds that the Pham spine is a Lagrangian vanishing spine, in the sense explained above. Such a  symplectic interpretation allows an optimism about the geometric monodromy of complex hypersurface singularities. This topic needs however further investigation in future.

The gist for the ideas in this paper is contained in the first author preprint~\cite{Camp1}. The present paper fully develop the ideas contained there for the surface case and systematically studies mapping classes in terms of \tat graphs. 

We have tried to make the paper as self-contained as possible. The structure is as follows. In \Cref{sec:thick} we introduce the basic definitions on ribbon graphs and thickening surfaces, and set up the notation and terminology for the rest. In \Cref{00puretat} we define \tat graphs and prove their basic properties. In \Cref{sec:iso} we recall basic facts on periodic mapping classes, and introduce the fractional Dehn twist coefficients. In \Cref{sec:tatatat} \tat twists are introduced, and the characterization of \tat twists explained above is proved. In \Cref{sec:pseudo} we recall the facts we need on pseudo-periodic homeomorphisms and set up the notation for the rest of the paper. In \Cref{sec:mixed} mixed \tat graphs and twists are introduced, and the realization theorem explained above, which implies that monodromies of 
%{\color{red} Again WHY irreducible}
irreducible branches are \tat twists, is  proved. In this last section, for pedagogical reasons, we start by analyzing a class of pseudo-periodic homeomorphisms, and constructing a spine on the underlying surface with special properties. This motivates the definition of mixed \tat graphs and twists. After this previous analysis the proof of main \Cref{thm:mixed_model} follows.

\section{Graphs, spines and regular thickenings}\label{sec:thick}

A \emph{graph} $\G$ is a $1$-dimensional finite CW-complex. We denote by $v(\G)$ the \emph{set of vertices} and by 
$e(\G)$  the \emph{set of edges}. We allow  \emph{loops} (edges starting and ending in the same vertex) 
and also several edges connecting two vertices. 
For a vertex $v$ we denote by $e(v)$ the set of edges adjacent to $v$, where an edge $e$ appears twice in case it is a 
loop joining $v$ with $v$. The \emph{valency} of a vertex is the cardinality of $e(v)$. Unless we state the contrary we assume that there are no vertices of valency $1$. 

A {\em ribbon graph} is a graph $\G$ such that for every vertex we fix a cyclic order in the set of edges $e(v)$. %The surface $\Si$ constructed above is the {\em thickening of} $\G$. 

A \emph{regular thickening} of a ribbon graph $\G$ is a piecewise smooth embedding $\G\hookrightarrow \Si$  of the 
graph, as a deformation retract of an
oriented surface with boundary, such that the cyclic ordering of the incoming edges at each vertex is induced 
counterclockwise by the orientation of the surface. The thickening surface $(\Si, \G)$ is unique only up to orientation 
preserving 
homeomorphism of the surface.

Reciprocally, every oriented surface with finite topology and non-empty boundary has a \emph{spine} $\G$
(i.e. an embedded graph in $\Si\setminus\partial \Si$ that is
a \emph{regular retract} of $\Si$) 
with a ribbon graph structure whose regular thickening is $(\Sigma,\G)$. 

\begin{example}\label{ex:Kpq}
Let $K_{p,q}$ be the bipartite graph $(p,q)$. The set of vertices is the union of two sets $A$ and $B$ of $p$ and $q$ vertices respectively. The edges are exactly all the 
possible non-ordered pairs of points one in $A$ and one in $B$.  

Now we fix cyclic orderings in $A$ and $B$. These give cyclic orderings in the sets of edges adjacent to vertices in $B$ and $A$ respectively. 

One can check that the thickening surface has as many  boundary components as $\gcd(p,q)$ and genus equal to $\frac{1}{2}[(p-1)(q-1)- \gcd(p,q)+1]$. 

In \Cref{fig:bipartite} we have the example of $K_{2,3}$. 

\begin{figure}[!ht]
\centering
\includegraphics[scale=0.5]{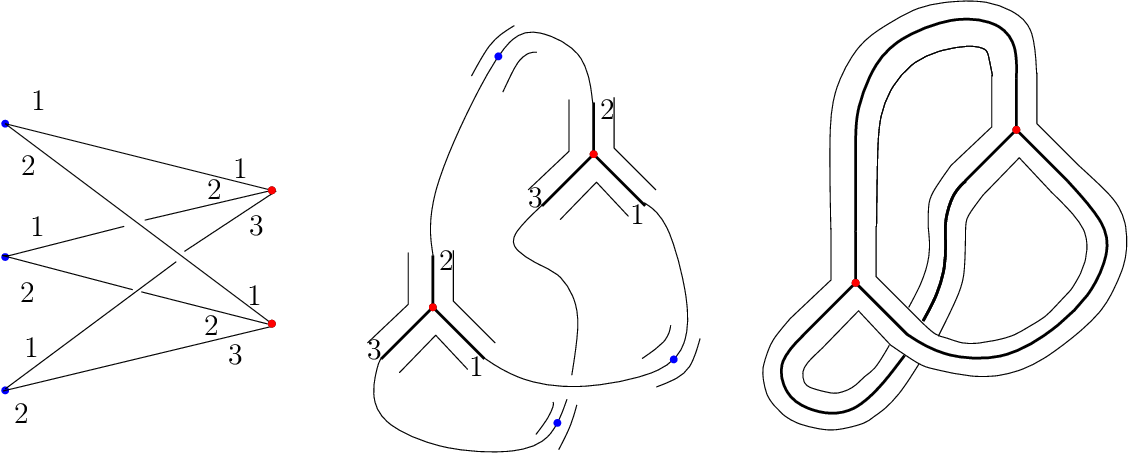}%{bipartite}
\caption{Thickening of the graph $K_{2,3}$ in three steps. First we have a planar projection of $K_{2,3}$ where the two 
subsets of 2 and 3 vertices are vertically ordered in 
different parallel lines. Then we thicken a neighbourhood of every vertex and finally we glue the pieces. The resulting surface is homeomorphic to the once-punctured torus.} 
\label{fig:bipartite}
\end{figure}
\end{example}

We introduce a generalization of the notion of spine of a surface with boundary $\Si$, 
which treats in a special way a certain union of boundary components. Let us start by the corresponding graph theoretic notion.

\begin{definition}
\label{def:rel_ribbon} 
Let $(\G,A)$ be a pair formed by a graph $\G$ and an oriented subgraph $A$ such that each of its connected components $A_i$ is homeomorphic to the oriented circle
$\mathbb{S}^1$. The pair $(\G,A)$ is a \emph{relative ribbon graph} if for any vertex $v \in A$ the set of incident edges
$e(v)$ is endowed with a cyclic ordering $e(v)=\{e_1,...,e_k\}$ \textit{compatible} with the orientation of $A$, which means that 
\begin{itemize}
 \item $e_i<e_{i+1}$ and $e_k<e_1$,
 \item the edges belonging to $A$ (that necessarily belong to a single component $A_i$) are $e_1$ and $e_k$,
 \item if we consider a small interval in $A_i$ around $v$ and we parametrize it in the direction induced by the orientation of $A_i$, then  we pass first by $e_1$ and after by 
 $e_k$.
 \end{itemize}
 \end{definition}

%\begin{definition}
%\label{def:rel_ribbon} 
%Let $(\G,A)$ be a pair formed by a graph $\G$ and an oriented subgraph $A$ as above. The pair is a \emph{relative ribbon graph} if for any vertex $v$ the set of incident edges
%$e(v)$ is endowed with a cyclic ordering compatible with the orientation of $A$.
%\end{definition}

%\red{Given a relative ribbon graph $(\Gamma,A)$ we consider  a \emph{regular thickening} surface $(\Si, \G, A)$ as follows.}  
A \emph{regular thickening} of a relative ribbon graph $(\Gamma,A)$
 is a piecewise smooth embedding of pairs
$(\Gamma,A)\hookrightarrow (\Sigma,\partial\Sigma)$ into a surface $\Sigma$ with boundary
such that each $A_i$ is sent homeomorphically (orientation reversing) to a component of $\partial\Sigma$, the subgraph $\Gamma\setminus A$ is 
sent to the interior of $\Sigma$, and the image of $\Gamma$ is a deformation retract of $\Sigma$. 
The cyclic ordering of the incoming edges at each vertex is induced counterclockwise by the orientation of the surface. 
The thickening surface $(\Sigma,\G,A)$ is unique only up to orientation preserving homeomorphism of the surface.

%
%We define the \emph{thickening surface of a relative ribbon graph $(\G,A)$} as follows.
%
%\begin{definition}
%\label{thickrelribbon}
%We consider star shaped pieces as in \Cref{fig:tat1} for every vertex in $\G\setminus A$. For  every component $A_i$  we consider a partial thickening of 
%$A_i$ with as many arms as edges with a vertex in $A_i$ as in \Cref{fig:rel}. We glue arms of different pieces corresponding to the same edges analogously to 
%\Cref{sec:thick}, in such a way that the resulting surface admits an orientation that induces on $A$ its opposite orientation. This is the orientation that we had previously fixed on $\Si$. The output of this procedure is a pair $(\Si,A)$ given by an oriented surface and a union $A$ of its boundary components. 
%    
%\end{definition}

\begin{figure}[!ht]
\centering
\includegraphics[scale=0.5]{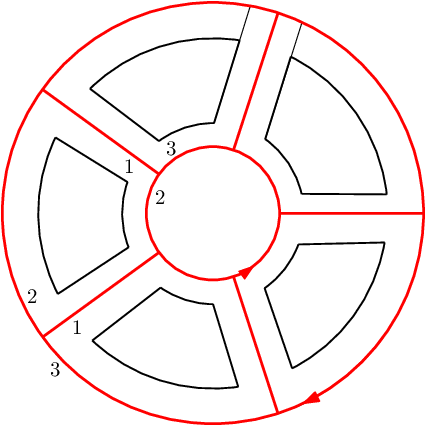}%{relative}
\caption{Thickening of a relative ribbon graph with two relative components. The arrows in the picture indicate the direction that safe walk takes in the relative components. The numbers indicate the cyclic ordering of the edges adjacent to two different vertices. The surface is a sphere with $7$ holes .}
\label{fig:rel}
\end{figure}

%It is clear that $\G$ is a regular retract of the thickening surface. 
Reciprocally: for any pair $(\Si,A)$, given by an oriented surface $\Sigma$ and a union of some boundary 
components $A$, if there is a graph $\G$ embedded in $\Si$ with $A\subset \G$ %and containing $A$,
 such that $\G$ is a regular retract of $\Si$, then $(\Si,\G, A)$ is a thickening of $(\G,A)$.

\begin{notation}\label{not:cut}
From now on the letter $I$ denotes an interval, unless  otherwise specified, it denotes the unit interval.
Let $(\G,A)$ be a relative ribbon graph and $(\Si,\G,A)$ a thickening. 
There is a connected component of $\Si\setminus\G$ for each boundary component $C_i$ of $\Si$ not contained in $A$, this component is homeomorphic to $C_i\times (0,1]$.
We denote by $\widetilde{\Si}_i$ the compactification of $C_i\times (0,1]$ to $C_i\times I$.
We denote by $\Si_\G$ the surface obtained 
by cutting $\Si$ along $\G$, that is taking the disjoint union of the $\widetilde{\Si}_i$. Let
$$g_\G: \Si_\G \rightarrow \Si$$
be the gluing map.  We denote by $\widetilde \G_i$ the boundary component of the cylinder $\widetilde{\Si}_i$ that comes 
from the graph (that is $g_\G(\widetilde \G_i)\subset \G$) and by $C_i$ the one coming from a boundary component of $\Si$ (that is $g_\G(C_i)\subset \partial \Si$). From now on, we take the convention that $C_i$ is identified with $C_i \times \{1\}$ and that $\TG_i$ is identified with $C_{i} \times \{0\}$.
We set $\Si_i:=g_\G(\widetilde{\Si}_i)$ and $\G_i:=g_\G(\widetilde{\G}_i)$. Finally we denote $g_\G(C_i)$ also by $C_i$ since $g_\G|_{C_i}$ is bijective.
The orientation of $\Si$ induces an orientation on every cylinder $\widetilde{\Si}_i$ and on its boundary components. 
\end{notation}

\section{T\^ete -\`a-t\^ete graphs}
\label{00puretat}

We now consider metric relative ribbon graphs $(\G,A, d)$. 
A metric graph is a graph $\G$ together with lengths $l(e)\in \R$ for every edge $e\in e(\G)$. In an edge $e$, we take a homogeneous
metric that gives $e$ total length $l(e)$. 
We consider the distance $d(x,y)$ on $\G$ given by the minimum of the lengths of the paths joining $x$ and $y$. 
%It is a complete metric space. 

\begin{definition}
A {\em walk} in a graph $\G$ is a continuous mapping 
$$\gamma:I\to \G,$$
from an interval $I$, possibly infinite, and such that for any $t\in I$ there exists a neighbourhood around $t$ where $\gamma$ is injective.
\end{definition}

%Now we give the first definitions, following essentially \cite{Camp1}. 
The notion of safe walk is central in this paper. We start by a purely graph theoretical definition.

\begin{definition}[Safe walk]\label{def:safe}
Let $(\G,A)$ be a metric relative ribbon graph. A safe walk for a point $p$ in the interior of some edge is a walk 
$\gamma_p:\R_{\geq 0} \rightarrow \Gamma$ with $\gamma_p(0)=p$ and such that:

\begin{enumerate}
\item[(1)] The absolute value of the speed $|\gamma_p'|$ measured with the metric of $\G$ is constant and equal to  $1$. Equivalently, the safe walk is parametrized by arc length, 
i.e. for $s$ small enough $d(p,\gamma_p(s))=s$. 
\item[(2)] when $\gamma_p$ gets to a vertex, it continues along the next edge in the given cyclic order.
\item[(3)] If $p$ is in an edge of $A$, the walk $\gamma_p$ starts running in the direction prescribed by the orientation of $A$.
\end{enumerate}
In this paper, we always work with restrictions of safe walks to intervals. An $\ell$-safe walk is the restriction of a safe walk to the interval $[0,\ell]$.
If a length is not specified when referring to a safe walk, we will understand that its length is $\pi$.
\end{definition}

The notion in (2) of \emph{continuing along the next edge in the order} of $e(v)$ is equivalent to the notion of \emph{turning to the right} in every vertex for paths
parallel to $\G$ in any thickening surface $(\Si,\G,A)$ as it is explained in \cite{Camp1}.

Following the first author choice in \cite{Camp1}, in the first part of the paper, we adopt the convention of working mainly with  $\pi$-safe walks. In \Cref{sec:signed} we will need safe walks of different lengths, 
that is why we introduce it in such a generality.

\begin{remark}[Safe walk via cylinder decomposition.]\label{re:safe_cylinder} Taking a thickening $(\Si,\G,A)$, the previous definition extends to safe walks $\gamma_p$ starting also at $p \in v(\G)$ by replacing 
condition \emph{(2)} by  
\begin{enumerate}
 \item[\emph{(2')}] the path $\gamma_p$ admits a lifting $\tilde{\gamma}_p:\R \rightarrow \Si_\G$ in the cylinder 
decomposition of $\Si_\G$ (see  \Cref{not:cut}), which runs in the opposite direction to the one indicated by the orientation induced at the boundary of the cylinder.
\end{enumerate}
\end{remark}

\begin{remark}[Definition of $\gamma_p$ and $\omega_p$.]\label{re:cut}
Given a relative ribbon graph $(\G,A)$ and a thickening $(\Si,\G, A)$, we make some observations in order to help fixing ideas: 
\begin{enumerate}

\item[(a)] every vertex $v\in v(\G)$ has as many preimages by $g_\G$ as its valency $e(v)$. These preimages belong to certain $\widetilde{\G}_i\cont \widetilde{\Si}_i$ for 
certain cylinders $\widetilde{\Si}_i$ which could occasionally be the same. An interior point of an edge not included in $A$ has always two preimages. 
An interior point of an edge included in $A$ has always one preimage.

\item[(b)] for every point $p\in \G$ and every oriented direction from $p$ along $\G$ compatible with the orientation of $A$ there is a safe walk starting on $p$ following 
that direction. This safe walk admits a lifting to one of the cylinders $\widetilde{\Si}_i$. 

In particular:
\begin{enumerate}
\item[(b1)] For $p$ an interior point of an edge not belonging to $A$, that is for $p\in \G\setminus (v(\G)\cup A)$, only $2$ starting directions for a safe walk are 
possible, corresponding to the two different preimages of $p$ by $g_\G$. We will denote the corresponding safe walks by $\gamma_p$ and $\omega_p$. If $p$ is at the
interior of an edge contained in $A$ only one starting direction for a safe walk at $p$ is possible.

\item[(b2)] For a vertex $v$, not belonging to $A$ there are as many starting directions as edges in $e(v)$, and for any vertex $v$ belonging to $A$, there are as many starting 
directions as edges in $e(v)$ minus $1$ (the edge in $A$ whose orientation arrives to $v$ does not count).
\end{enumerate}
\end{enumerate}

\end{remark} 

\begin{definition}[\Tat property and \tat graph] \label{def:tat}
Let $(\G,A,d)$ be a metric relative ribbon graph without univalent vertices. 
We say that $\G$ satisfies the \emph{$\ell$-\tat property}, or that $\G$ is an \emph{$\ell$-\tat graph} if
\begin{itemize}

\item[$\bullet$] For any point $p\in \G\setminus (A\cup v(\G))$ the two different $\ell$-safe walks starting at $p$ (see \Cref{re:cut}), that we denote 
by $\gamma_p$ and $\omega_p$, satisfy $\gamma_p(\ell)=\omega_p(\ell)$.
\item for a point $p$ in $A\setminus v(\G)$, the end point of the unique $\ell$-safe walk starting at $p$ belongs to $A$.

\end{itemize}

If $(\Si,\G,A)$ is the regular thickening of the graph $(\Gamma,A)$ where $A$ denotes the corresponding union of boundary components, 
we say that $(\G,A)$ \emph{gives a relative $\ell$-\tat structure to} $(\Si,\G,A)$ or that $(\G,A)$ is a \emph{relative $\ell$-\tat graph or spine for} $(\Si,\G,A)$.  

If $A=\emptyset$, we call it a \emph{pure} $\ell$-\tat structure or graph.
\end{definition}

\begin{lemma}[Lemma and Definition]
\label{lem:sigma} 
For an $\ell$-\tat graph $(\G,A,d)$, the mapping $\sigma_\G:\G\to \G$ defined by 
$\sigma_\G(p)=\gamma_p(\ell)$ is a well defined homeomorphism.
\end{lemma}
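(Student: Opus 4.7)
The plan is to build $\sigma_\G$ first on the open edges of $\G$, then extend it by continuity to the vertices, and finally exhibit a continuous inverse. On the interior of an edge not in $A$, well-definedness is immediate from the \tat property $\gamma_p(\ell)=\omega_p(\ell)$, so $\sigma_\G(p)=\gamma_p(\ell)$ does not depend on the chosen starting direction; on the interior of an edge of $A$ there is only one safe walk and the definition is trivially unambiguous. Continuity on the open edges follows because, on a sufficiently small arc around $p$ on which the safe walk of length $\ell$ does not pass through a vertex, safe walks from nearby points are obtained from $\gamma_p$ by a shift of parameter, so $\sigma_\G$ restricts there to an isometric translation along the trace of $\gamma_p$.

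To define $\sigma_\G$ at a vertex $v$, I would approach $v$ along each incident edge and take limits. If $e_1<\dots<e_k$ is the cyclic order at $v$ and $p_n$ lies in the interior of $e_i$ with $p_n\to v$, then one of the two safe walks from $p_n$ converges to the safe walk $\gamma_{v,e_i}$ issuing from $v$ along $e_i$, while the other, arriving at $v$ via $e_i$ and then leaving along $e_{i+1}$ by the cyclic rule, converges to $\gamma_{v,e_{i+1}}$. Passing to the limit in $\gamma_{p_n}(\ell)=\omega_{p_n}(\ell)$ gives $\gamma_{v,e_i}(\ell)=\gamma_{v,e_{i+1}}(\ell)$, and cyclic induction identifies all these endpoints with a single point, which I take as $\sigma_\G(v)$. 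For $v\in A$ only the directions compatible with the orientation of $A$ are considered, and the same induction applies; continuity of $\sigma_\G$ at $v$ is built into this limit construction.

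For bijectivity and a continuous inverse I would lift to the cylinder decomposition of \Cref{not:cut}. The safe walk on $\G$ lifts to rotation by $\ell$ on each oriented boundary circle $\widetilde{\G}_i$, giving a homeomorphism $\widetilde{\sigma}$ of $\widetilde{\G}=\bigsqcup_i\widetilde{\G}_i$; the \tat property together with its vertex extension above is exactly the statement that $\widetilde{\sigma}$ descends through the quotient $g_\G$ to $\sigma_\G$, whence $\sigma_\G$ is surjective. To invert, I would verify a ``reverse \tat property'' at every interior point: given $r\in\G$, surjectivity produces $p$ with $\sigma_\G(p)=r$, and the two reverse safe walks from $r$ of length $\ell$ simply retrace $\gamma_p$ and $\omega_p$, hence both end at $p$. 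Running the previous two paragraphs for the metric ribbon graph with reversed cyclic orders and reversed $A$-orientation then produces a continuous $\tau_\G$ on $\G$ with $\sigma_\G\circ\tau_\G=\tau_\G\circ\sigma_\G=\mathrm{id}_\G$. The main obstacle throughout is the vertex extension: it is what ties the two safe walks at $p$ to the walks issuing from $v$ at time $0$, and it is also what guarantees that the reversed construction really descends to an inverse of $\sigma_\G$.
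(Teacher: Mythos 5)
Your proof is correct, and its engine is the same as the paper's: lift to the cylinder decomposition of \Cref{not:cut} and view the assignment $p\mapsto\gamma_p(\ell)$ as the negative rotation of amplitude $\ell$ on each circle $\widetilde{\G}_i$, which the \tat property makes compatible with the gluing $g_\G$. The difference is in how much you do by hand. The paper checks compatibility of the rotation with $g_\G$ only at points not lying over vertices (this is literally the \tat property) and then extends compatibility to the vertex fibres ``by continuity''; your edge-by-edge definition and the limit/cyclic-induction argument at vertices is a spelled-out version of that continuity step, so it is fine but redundant once the descent is in place. More substantially, for invertibility the paper does not need your ``reverse \tat property'' or the reversed ribbon graph: since the map upstairs is a rotation, hence a homeomorphism whose inverse (the positive rotation) is compatible with $g_\G$ by the same argument, the descended map $\sigma_\G$ is automatically a homeomorphism of the compact space $\G$. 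Your reverse-walk route works, but note it silently uses the cylinder picture anyway, to see that the reversals of $\gamma_p$ and $\omega_p$ reach $r$ from opposite sides and hence are the two distinct reverse safe walks at $r$; once you are willing to argue upstairs, it is shorter to take the inverse rotation there and descend it, as the paper does.
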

\begin{proof}
Let 
$$\sigma:\coprod_i \widetilde \G_i\to \coprod_i \widetilde \G_i$$
be the homeomorphism which restricts to the metric circle $\widetilde \G_i$ to the negative rotation of amplitude $\ell$ (move each point to a point which is at distance 
$l$ in the negative sense with respect to the orientation induced as boundary of the cylinder). 
The \tat property implies that $\sigma$ is compatible with the gluing $g_\G$ at any point which is not the preimage of a vertex. By continuity
the compatibility extends to all the points. The mapping $\sigma$ descends to the mapping $\sigma_\G$. This proves the assertion.
\end{proof} 

\begin{remark}\label{re:S1_segm} There is a special and easy case for $\pi$-\tat graphs: when $\G$ is homeomorphic to 
$\MS^1$. The thickening surface is in this case the cylinder. 

If $\G$ is $\MS^1$, then the only possibilities for $\sigma_\G$ are the identity or the $\pi$ rotation (for the homogenous metric). Then $\G$ has total length of $2\pi/n$ for some $n \in \N$.
\end{remark}

\begin{corollary}\label{cor:sigma}The homeomorphism $\sigma_\G$ has the following properties:
\begin{enumerate}
\item it is an isometry,
\item it preserves the cyclic orders of $e(v)$ for every $v\in v(\G)$,
\item it takes vertices of valency $k>2$ to vertices of the same valency, 
\item it has finite order.
\end{enumerate}
\end{corollary}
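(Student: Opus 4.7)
All four properties flow from the lift $\sigma:\coprod_i \widetilde\G_i\to \coprod_i \widetilde\G_i$ used in the proof of \Cref{lem:sigma}, which is the negative rotation of amplitude $\ell$ on each circle.

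For (1), each circle rotation is an isometry of $\widetilde\G_i$ with its arc-length metric, and the gluing $g_\G$ identifies points with locally compatible metric neighborhoods, so the relation $\sigma_\G\circ g_\G=g_\G\circ\sigma$ makes $\sigma_\G$ locally an isometry on $\G\setminus v(\G)$; by continuity it is locally an isometry on all of $\G$, and since $\G$ is a compact length space and $\sigma_\G$ is a bijection, this upgrades to a global isometry. For (2), at a vertex $v$ of valency $k$ the preimages $\{v_1,\dots,v_k\}$ under $g_\G$ lie in the $\widetilde\G_i$'s and their cyclic arrangement mirrors the cyclic order of $e(v)$ (each $v_j$ sits in the corner between two consecutive edges). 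Since $\sigma$ is orientation-preserving on each circle, consecutive preimages of $v$ are sent to consecutive preimages of $\sigma_\G(v)$, so $\sigma_\G$ preserves the cyclic order. For (3), an isometry preserves local topology: the $\varepsilon$-ball at a vertex of valency $k>2$ is isometric to a star with $k\geq 3$ arms, which cannot be mapped isometrically onto an interval (the $\varepsilon$-ball at a bivalent vertex or at an interior edge point); thus $\sigma_\G$ sends such a vertex to a vertex of the same valency.

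For (4), by (3) the finite set $V_{\geq 3}$ of vertices of valency at least $3$ is permuted by $\sigma_\G$, so some $\sigma_\G^a$ fixes $V_{\geq 3}$ pointwise. The components of $\G\setminus V_{\geq 3}$ fall into two finite classes: \emph{arc-type}, whose closure is a segment (or loop) with endpoints in $V_{\geq 3}$, and \emph{circle-type}, which are entire connected components of $\G$ homeomorphic to $\MS^1$ and disjoint from $V_{\geq 3}$. Some further power $\sigma_\G^{ab}$ fixes each component setwise. On an arc-type component both endpoints are fixed by $\sigma_\G^{ab}$, so the restriction is an isometry of a closed interval (or of $\MS^1$ with a fixed point, in the loop case) fixing the endpoint(s), hence either the identity or the involutive midpoint reflection; in either case $\sigma_\G^{2ab}$ restricts to the identity there. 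On a circle-type component, $\sigma_\G$ restricts to the \tat map of a pure \tat graph homeomorphic to $\MS^1$, so by \Cref{re:S1_segm} its order divides $2$. Taking $N$ to be any common multiple of $2ab$ and $2$ gives $\sigma_\G^N=\mathrm{id}_\G$.

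\textbf{Main obstacle.} Items (1)--(3) are essentially local: they reduce to the fact that $\sigma$ is an orientation-preserving isometric rotation. The substantive part of the argument is (4), where I must carry out the case split for components of $\G\setminus V_{\geq 3}$ and rule out infinite-order behavior on pieces that have no $V_{\geq 3}$-vertex to pin them down; this is precisely where \Cref{re:S1_segm} does work that local isometry reasoning cannot.
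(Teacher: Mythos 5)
Your proposal is correct and follows essentially the same route as the paper: items (1)--(3) are read off from the fact that $\sigma_\G$ is a homeomorphism assembled from the rotations of the circles $\widetilde{\G}_i$ (hence edgewise isometric, orientation/cyclic-order preserving, valency preserving), and (4) is deduced from the finiteness of the induced permutation of vertices and edges together with the isometry property, with the circle case delegated to \Cref{re:S1_segm}, exactly as in the paper. Your case analysis in (4) (arcs, loops, whole circle components) is simply a more detailed unwinding of the paper's compressed argument, so there is no essential difference in approach.
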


\begin{proof}
Point $(1)$ follows from \Cref{lem:sigma} because $\sigma_\G$ is a homeomorphism that is an isometry 
restricted to the edges.
Point $(2)$ follows from the fact that $\gamma_p(t+t')=\gamma_{\gamma_p(t)}(t')$. More precisely, take $e'$ appearing 
after $e$ in the cyclic order of $e(v)$. Take $p\in e$ and $\gamma_p(t)\in e'$ for $t$ small. Then, it is clear that 
the next edge of $\sigma_\G(e)$ in the cyclic order of  $v(\sigma_\G(v))$ is the edge of $\gamma_p(l+t)$.  

Point $(3)$ is immediate since $\sigma_\G$ is a homeomorphism. 

To see that $\sigma_\G$ has finite order when it is not $\MS^1$, we observe that $\sigma_\G$ induces a permutation between edges and vertices of $\G'$ and is an isometry. Then, it has finite order. When the graph is homeomorphic to $\MS^1$ it is not clear, a priori, that $\sigma_\G$ permutes vertices but the result follows from the observation in \Cref{re:S1_segm}.
\end{proof}

Note that condition (2) implies that $\sigma_\G$ can be extended to a homeomorphism of a thickening surface. 
The induced \tat homeomorphisms that we will define (see  \Cref{def:phi_Gtau} or \ref{def:phi_Gper}) are certain extensions of it.

\begin{corollary}
\label{cor:sigma_fix}
The following assertions hold:
\begin{enumerate}\item If $\sigma_\G|_e=id$  for some edge $e$, then $\sigma_\G$ is the identity. 
\item for every $m\in \N$ the homeomorphism $\sigma_\G^m$ is also induced by a \tat graph,
\item If $\sigma_\G^m|e=id$ for some edge $e$, then $\sigma^m_\G$ is the identity. 
\end{enumerate}
\end{corollary}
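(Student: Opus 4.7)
The three assertions are interlinked and it is natural to deduce (3) from (1) and (2), so the real work is (1) and (2).

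For (1), the plan is to show that the fixed-point set $F := \{p \in \G : \sigma_\G(p)=p\}$ is both closed and open in $\G$ and then invoke connectedness of $\G$. Closedness is immediate from continuity of $\sigma_\G$ (\Cref{lem:sigma}). For openness at an endpoint $v$ of the fixed edge $e$, \Cref{cor:sigma}(2) says that $\sigma_\G$ induces a cyclic-order-preserving permutation on $e(v)$ that fixes the element $e$; but a cyclic-order-preserving permutation of a finite set with a fixed element is the identity, so every edge $e'$ at $v$ is sent to itself with orientation preserved at $v$. Then $\sigma_\G|_{e'}$ is an isometry of $e'$ fixing one endpoint together with its attaching half-edge direction, hence $\sigma_\G|_{e'}=\mathrm{id}$. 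Propagating along $\G$, which is connected as the spine of a connected surface, gives $F=\G$.

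For (2), the plan is to exhibit the required \tat graph directly by rescaling the metric: consider $(\G,A)$ with the new metric $d' := d/m$, in which every edge $e$ has length $l(e)/m$. A $\pi$-safe walk in $(\G,A,d')$ traces the same path as an $(m\pi)$-safe walk in $(\G,A,d)$. Using the semigroup relation $\gamma_p(t+t')=\gamma_{\gamma_p(t)}(t')$ invoked in the proof of \Cref{cor:sigma}, iterating $m$ times yields $\gamma^{d'}_p(\pi)=\sigma_\G^m(p)$; applying the original $\pi$-\tat property at each intermediate time $k\pi$, $k=1,\dots,m-1$, shows that the corresponding iteration starting in the opposite direction also ends at $\sigma_\G^m(p)$. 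Hence $(\G,A,d')$ satisfies the $\pi$-\tat property and its induced homeomorphism is exactly $\sigma_\G^m$. Part (3) is then immediate: since $\sigma_\G^m=\sigma_{(\G,A,d/m)}$, an edge on which $\sigma_\G^m$ restricts to the identity forces the whole homeomorphism to be the identity by applying (1) to this new \tat graph.

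The main obstacle I anticipate is in the propagation step of (1) at vertices incident to loops, where a single edge $e'$ contributes two half-edges to $e(v)$: one must check that once the cyclic-order argument has fixed both half-edges of $e'$ individually, any isometry of the loop $e'$ fixing $v$ together with both attaching directions is forced to be the identity, ruling out the otherwise admissible orientation-reversing involution of the loop.
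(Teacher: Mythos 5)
Your proposal is correct and follows essentially the same route as the paper's proof: for (1) the cyclic-order-preservation of $\sigma_\G$ at the endpoints of $e$ forces all adjacent edges to be fixed and connectedness propagates this to all of $\G$; for (2) one rescales the metric, taking the same combinatorial graph with edge lengths divided by $m$; and (3) follows by combining (1) and (2). Your version merely adds detail the paper leaves implicit (the clopen formulation, the loop case, and the verification that the rescaled graph is indeed \tat with induced homeomorphism $\sigma_\G^m$).
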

\begin{proof} Given $\sigma_\G$ as in $(1)$, since it preserves the cyclic order at every $v$, then it fixes all the edges
adjacent to the vertices of $e$. Since the graph
is connected, this argument extends to the whole graph and the statement follows. 

To see $(2)$ and find a \tat graph for $\sigma_\G^m$ one can take the same combinatorial graph $\G$ with edge lengths equal to the ones of $\G$ divided by $m$.

Assertion $(3)$ follows from $(1)$ and $(2)$. % that the homeomorphism $\sigma_\G^m$ is also induced by a \tat graph and then we are in the case of point $(1)$. 
\end{proof}

\begin{example}\label{ex:Kpq2}
Given a ribbon graph $K_{p,q}$ as in  \Cref{ex:Kpq}, we endow it with a metric that gives length $\pi/2$ to any edge. 
 Then we have a \tat graph. Moreover, the homeomorphism $\sigma_\G$ has order $\lcm(p,q)$. There are two special orbits, 
 one of $p$ vertices and another one of $q$ vertices.

%Then, knowing that $\Si^{\phi_\G}$ has also $D=\gcd(p,q)$ boundary components and using Hurwitz formula we get that the genus of the quotient surface $\Si^{\phi_\G}$ is $0$. 

\end{example}

When dealing with \tat graphs and $\sigma_\G$, we can assume that we have
particularly simple combinatorics as the following lemma shows:

\begin{lemma}
\label{lem:assum} 
If $(\G,A, d)$ is a \tat graph, only modifying the underlying combinatorics (without changing the topological type of $\G)$, we can ensure we are in one of the following cases: 
\begin{enumerate}
\item unless $\G$ is either homeomorphic to $\MS^1$ or contractible, all the vertices have valency $\geq 3$,
\item there are no edges joining a vertex with itself and there is at most one edge joining two vertices. In this case the restriction $\sigma_\G|_{v(\G)}$ 
determines $\sigma_\G$.
\item all the edges have the same length,
\item the graph satisfies properties in (2) and (3) simultaneously. 
\end{enumerate} 
\end{lemma}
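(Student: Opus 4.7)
I treat the four items separately but observe that (1) and (2) are achieved by dual combinatorial moves (smoothing and subdividing valency-$2$ vertices), while (3) is the item that carries real content, and (4) follows by combining (2) and (3).

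For (1), I iteratively smooth every valency-$2$ vertex: if $v$ has valency $2$ with incident edges $e_1,e_2$, I replace the segment $e_1\cup\{v\}\cup e_2$ by a single edge $e$ with $l(e)=l(e_1)+l(e_2)$, inheriting the ribbon cyclic order from the neighbours. If $v\in A$ then both $e_1,e_2$ lie in $A$ by the compatibility clause in \Cref{def:rel_ribbon}, so the relative structure is preserved. Safe walks and $\sigma_\G$ are unchanged because at a valency-$2$ vertex the cyclic order forces the walk to continue straight through. The process terminates; the only way it fails to produce a graph of minimum valency $\ge 3$ is that every remaining vertex has valency $2$, which topologically means $\G\simeq\MS^1$ or $\G$ is contractible, the two listed exceptions. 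For (2), I apply the inverse move: every loop $e$ at a vertex $v$ is subdivided by inserting one or two valency-$2$ vertices in its interior, so that the resulting edges each connect distinct vertices; every pair of parallel edges is likewise broken by a single insertion on each. Finitely many such insertions remove all loops and multiedges without altering the topological type of $\G$. In this reduced combinatorics any ribbon-isomorphism of $\G$ is determined by its restriction to $v(\G)$: once $\sigma_\G(v)$ and the induced permutation of the cyclic order on $e(v)$ are known, the image of each incident edge is forced, because that edge is the unique edge between its (now distinct) endpoints.

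For (3), the point I need is that the edge lengths are commensurable; the subdivision is then easy. Since by \Cref{cor:sigma} $\sigma_\G$ is an isometry of finite order $n$, the orbit of any edge under $\sigma_\G$ consists of edges of equal length. Picking a vertex $v$ and following the (lifted) safe walk iteratively $n$ times one obtains a closed trajectory in $\Si_\G$ of total length $n\ell$ that consists of an integer sequence of entire edges of $\G$; this expresses $n\ell$ as a positive integer combination of edge lengths. Doing this at every vertex orbit and combining the linear relations so obtained with the equality of lengths inside orbits, one deduces that all edge lengths are rational multiples of $\ell$, hence commensurable. Pick $L>0$ dividing every $l(e)$, and subdivide each edge $e$ into $l(e)/L$ equal pieces of length $L$; the subdivision points are already canonical (they are the orbit of vertices under the uniform arclength parametrisation), hence $\sigma_\G$-equivariant, so the ribbon structure, the relative structure and the $\ell$-\tat property descend to the refined graph, in which every edge has length $L$.

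For (4), I first apply (3) to make all edges the same length $L$. If any loops or parallel edges persist, I further subdivide every edge into, say, three equal pieces of length $L/3$; this is still $\sigma_\G$-equivariant, preserves equal lengths, and, exactly as in (2), destroys all loops and multiple edges. The main obstacle, and the only step that is not formal, is the commensurability argument in (3); once that is in hand the remaining content is bookkeeping about how safe walks and $\sigma_\G$ behave under smoothing and subdivision of valency-$2$ vertices, which is immediate from the fact that a valency-$2$ vertex imposes no non-trivial constraint on the cyclic order.
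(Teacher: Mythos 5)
Your item (1) and the first half of (2) coincide with what the paper does. The fatal problem is item (3): the commensurability claim is false, so the lemma cannot be obtained by subdividing the \emph{given} metric. Take the figure-eight spine of the once-punctured torus: one vertex $v$ of valency $4$, two loops $\alpha,\beta$ of lengths $a,b$, with the interleaved cyclic order at $v$. Cutting along $\G$ gives a single cylinder whose graph-side boundary circle has length $2a+2b$ and traverses $\alpha\beta\alpha^{-1}\beta^{-1}$. If $a+b=\pi$, rotation by $\pi$ (half the circle) sends each gluing fibre to a gluing fibre: the fibre over the point of $\alpha$ at distance $s$ from $v$ goes to the fibre over the point at distance $a-s$, and likewise on $\beta$. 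Hence the $\pi$-\tat property holds for every $a=t$, $b=\pi-t$ with $0<t<\pi$, e.g.\ $t=1$, where the two edge lengths are incommensurable with $\pi$ and with each other, although every vertex has valency $\geq 3$. In this example your procedure yields from the vertex-based safe walks only the relation $a+b=\pi$ (each such walk crosses $\alpha$ once and $\beta$ once), and $\sigma_\G$ fixes each loop, so ``equality of lengths inside orbits'' adds nothing: the linear relations typically have a positive-dimensional solution set and cannot force rationality. This is exactly why the paper's proof of (3) does not attempt to prove commensurability: it collects the integer-coefficient linear equations given by the lengths of the safe walks starting at vertices, notes that the system has a positive real solution (the given lengths divided by $\pi$) and therefore a positive \emph{rational} solution, replaces the metric by such a solution rescaled by $\pi$ (one may average over $\sigma_\G$-orbits to keep the lengths $\sigma_\G$-invariant), and only then subdivides every edge into pieces of length $\pi/N$. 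So (3) is achieved by changing the lengths to a commensurable solution of the same system, not by showing the original lengths were commensurable; your (3), and with it your (4), fails on the example above.

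There is also a gap in the second half of (2). After inserting subdivision points chosen only ``so that the resulting edges connect distinct vertices'', the assertion that $\sigma_\G|_{v(\G)}$ determines $\sigma_\G$ needs $\sigma_\G$ to be an automorphism of the new graph structure, i.e.\ the inserted points must form a set invariant under $\sigma_\G$ (and under $\sigma_\G^{-1}$), so that images of vertices are vertices and images of edges are edges; for arbitrary insertion points $\sigma_\G$ sends a new vertex into the interior of an edge, and your ``unique edge between the images of the endpoints'' argument does not apply. The paper avoids this by inserting midpoints of edges and closing up under $\sigma_\G$: since $\sigma_\G$ maps edges isometrically onto edges (\Cref{cor:sigma}), midpoints go to midpoints, so at worst one adds all midpoints. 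You invoke precisely this equivariance in (3) and (4), but it is missing where it is needed, in (2); note also that a single interior point on a loop destroys the loop but leaves a double edge, so in general one needs two equivariantly placed points per edge (say at one third and two thirds of its length).
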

\begin{proof}
If $\G$ is either homeomorphic to $\MS^1$ or contractible, after \Cref{re:S1_segm}, the proof is trivial. 
 
Let's see the case where $\G$ is neither homeomorphic to $\MS^1$ nor contractible. 
To get a graph as in $(1)$ we can consider the graph $\G'$ forgetting the valency-$2$-vertices of $\G$ but keeping distances. It is clearly a \tat graph. 

To get a graph as in $(2)$ we consider the graph $\G$ as in $(1)$. We add as vertices some mid-point-edges $q_1$,...,$q_m$ to $\G'$ in order the new graph  has no loops 
and no more than one edge between any pair of vertices. Now, we have to add as vertices any other point $p$ for which $\gamma_p(\pi)$, the end of the safe walk for $\G$,
is one of these new vertices $q_i$. Since $\sigma_{\G}$ takes isometrically edges to edges, it will take midpoint edges to midpoint edges of $\G$. Then we have to add at
the most all the mid point edges of $\G$ as vertices to reach the desired graph.

Moreover, we note that in a graph as in $(2)$, the image $\sigma_\G(e)$ of an edge $e$ joining $v_i$ and $v_j$, has to be the only edge joining $\sigma_\G(v_i)$ and 
$\sigma_\G(v_j)$. Then, $\sigma_\G|_{v(\G)}$ determines $\sigma_\G$. 

To find a \tat graph as in $(3)$, we start with a graph  $\G$ as in $(1)$. 
The homeomorphism $\sigma_\G$ permutes edges. Moreover the \tat condition says that certain summations of the lengths $\{l(e)\}_{e\in e(\G)}$ are equal to $\pi$. 
We consider only the summations that come from measuring the lengths of the safe walks that start in vertices of $\G$, which are a finite 
number. We collect all these linear equations in the variables $l(e)$ in a system $S$. We consider the system of equations $S'$ by replacing the independent term $\pi$ in 
the equations of $S$ by $1$. 
It is clear that there exist positive rational solutions $l(e)$ of the system $S'$. Let $N$ be a common denominator. We consider the graph $\G'$ by subdividing every edge 
$e$ of $\G$ into $N\cdot l(e)$ edges of length $\pi/N$ obtaining the desired graph. 

If $\G'$ does not satisfy properties in $(2)$, replacing $N$ by $2N$ it does. You can also add the middle points of all the edges as vertices and finish as in the proof of $(2)$. 
\end{proof}

\begin{remark}This lemma shows that, without loss of generality, we can assume that a \tat graph has all the edges of the same length 1 and safe walks of integer length $\ell$. Anyway, we keep the original definition by historical reasons.
\end{remark}

A way to obtain relative \tat graphs from pure ones is the notion of $\epsilon$-blow up.

\begin{definition}[\textbf{$\epsilon$-Blow up of $(\Sigma,\G)$ at a vertex of $\G$}]
\label{def:blow}
Let $\G$ be a pure $\ell$-\tat graph and $(\Sigma,\G)$ be a thickening surface. Let $v$ be a vertex of valency $p$. We consider the oriented real blow up of $\Si$ at $v$. 
We denote by $\Si'$ and $\G'$ the transformations of $\Si$ and $\G$. Note that $\Si$ has one more boundary component and $\G'$ has changed the vertex $v$ by a circle 
$A\cong \MS^1$ with $p$ edges attached. Away from $v$, we consider the metric  in $\G'$ as in $\G$. We assign the length  $2\epsilon$ to the new edges in $\G'$ along 
$\MS^1$ and redefine the length of the edges corresponding to each $e \in e(v)$ by $length(e)-\epsilon$. 
(See \Cref{fig:blowup} below). We do this at every vertex on the orbit of $v$ by the $\sigma_\G$ and denote the resulting space by $Bl_v(\G, \epsilon)$. We say that it is the result of performing the $\epsilon$-blowing up of $\G$ at $v$.
\end{definition}

It is immediate to check that $(\G',A)$ is a relative \tat graph and that $(\Si',\G',A)$ is a thickening. %we denote it by $(\Si',(\G',A))$. 

\begin{figure}[!ht]
\centering
\includegraphics[scale=0.5]{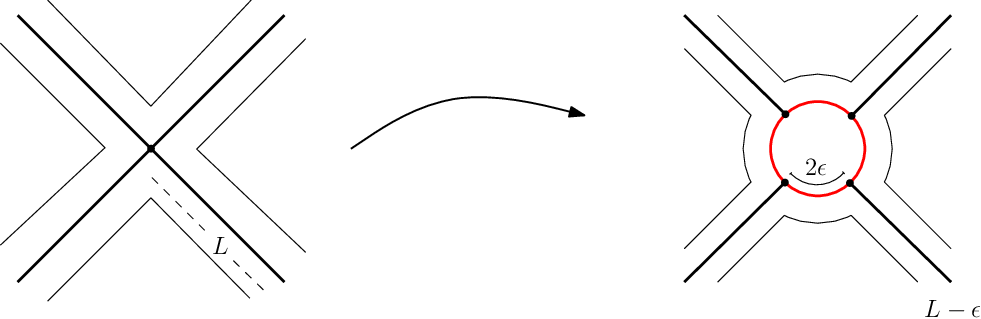}%{blowup}
\caption{Blow-up some vertex $v$ of valency $4$.}
\label{fig:blowup}
\end{figure}

%%%%%%%%%%%%%%%%%%%%%%%%%%%

\section{Periodic homeomorphisms up to isotopy}
\label{sec:iso}

In this section, for the reader convenience, we recall some elementary known facts about periodic homeomorphisms both in $MCG(\Si)$ and in
$MCG(\Si, \partial \Si)$. We fix notation and conventions for the rest of the work.

\begin{notation}\label{not:iso}
Let $\Si$ be a fixed surface with non-empty boundary.

We denote by $[\phi]$ the class of homeomorphisms of $\phi$ up to isotopy. The group, with the composition, of these equivalence classes is called the Mapping Class Group $MCG(\Sigma)$.  Two homeomorphisms of the same class are said to be \emph{boundary-free isotopic} or simply \emph{isotopic}.

A homeomorphism of a surface is \emph{periodic in} $MCG(\Sigma)$ or \emph{periodic up to boundary-free isotopy} if there exists $n\in \mathbb{N}$ such that $[\phi^n]=[id]$.

Let $B$ be a subset of $\partial\Sigma$.
We denote by  $[\phi]_{B, \phi|_B}$ the class of homeomorphisms that are isotopic to $\phi$ by an isotopy that coincides with $\phi|_B$ at $B$ all along the family.  We denote by $MCG(\Si, B, \phi|_B)$ the set of these classes. If $\phi$ and $\psi$ are in the same class we say they are \emph{isotopic relative to the action $\phi|_B$}. 
If the action is the identity on $B$, we omit the action in the notation and recover the classical notion of \emph{isotopy relative to} $B$, that means that all the homeomorphisms in the isotopy fix $B$ pointwise. We write these classes simply by $[\phi]_B$. 
We always consider the case in which $B$ is a union of connected components of $\partial\Sigma$ and denote it by $B=\partial^1\Si$. In the case $B=\partial\Si$ we will simply write $[\phi]_{\partial,\phi|_{\partial}}$ or $[\phi]_\partial$. Note that in this last case we recover the classical notion of mapping classes fixing pointwise the boundary.

We denote by $MCG^+(\Si)$,  $MCG^+(\Si, B, \phi|_B)$ or $MCG^+(\Si, B)$ the corresponding restrictions to homeomorphisms preserving orientation. 
\end{notation}

\begin{remark}
Observe that $MCG(\Si, \partial, \phi|_{\partial})$ is not a group. However, the group $MCG(\Si, \partial)$ acts  transitive and freely on it.
\end{remark}

\subsection{Periodic mapping classes in $MCG(\Si)$}

In this subsection we assume all isotopies are boundary-free. We focus on periodic elements of $MCG^+(\Si)$. 

A key result, only true in dimension 2 (see \cite{RayScott}), is the following classical theorem: 

\begin{theorem}[Nielsen's Realization Theorem \cite{Niel}, also see Theorem 7.1 in \cite{Farb}]\label{theo:niel} If $\phi^n$ is isotopic to the identity, then there exists $\hat{\phi}\in [\phi]$ such that 
$\hat{\phi}^n=Id$. Moreover, there exists a metric on $\Si$ such that $\hat{\phi}$ is an isometry.
\end{theorem}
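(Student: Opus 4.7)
The plan is to produce a hyperbolic metric on $\Si$ (with geodesic boundary) for which some representative of $[\phi]$ acts as an isometry, and then upgrade ``periodic up to isotopy'' to ``periodic on the nose'' via a rigidity argument. The easy cases in which $\Si$ is a disc, annulus, or has non--negative Euler characteristic would be disposed of by direct inspection, so I would assume $\chi(\Si)<0$ throughout.

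The mapping class group $MCG(\Si)$ acts on the Teichm\"uller space $\mathcal{T}(\Si)$ of marked hyperbolic structures on $\Si$ with geodesic boundary by pullback. Since $[\phi^n]=[\mathrm{Id}]$, the cyclic subgroup $G:=\langle[\phi]\rangle\subset MCG(\Si)$ is finite. The first key step is to find a point of $\mathcal{T}(\Si)$ fixed by $G$. For this I would use that $\mathcal{T}(\Si)$ endowed with the Weil--Petersson metric is a (non--positively curved, indeed CAT($0$)) geodesic metric space on which $MCG(\Si)$ acts by isometries; any finite group of isometries of a complete CAT($0$) space has a fixed point, obtained for example as the circumcenter of any orbit. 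Alternatively, Kerckhoff's convexity of length functions along earthquake paths yields the same conclusion and is in fact the route used for the full Nielsen realization of arbitrary finite subgroups.

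At a fixed hyperbolic structure $X$, the class $[\phi]$ admits a representative $\hat\phi$ that is an isometry of $(\Si,X)$. To conclude it suffices to show $\hat\phi^n=\mathrm{Id}$ as a map, not merely up to isotopy. Since $\hat\phi^n$ is an isometry isotopic to the identity, this reduces to the following rigidity fact: an isometry of a hyperbolic surface with $\chi<0$ that is isotopic to the identity is the identity. Indeed, such an isometry induces the identity on $\pi_1(\Si)$, lifts to an isometry of $\mathbb{H}^2$ commuting with every deck transformation, hence fixes pointwise the limit set of the deck group at $\partial_\infty\mathbb{H}^2$; that limit set has more than two points, so the lift is the identity. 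The hyperbolic metric $X$ is then the invariant metric claimed in the second part of the statement.

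The principal obstacle is the fixed--point theorem on $\mathcal{T}(\Si)$: the Teichm\"uller metric itself is only Finsler, so a naive averaging of base points does not produce a canonical $G$--fixed point, and one must either pass to a genuinely CAT($0$) structure (Weil--Petersson) or invoke Kerckhoff's convexity argument. Once that step is in place, the remaining pieces --- existence of the isometric representative and rigidity of isometries isotopic to the identity --- are standard consequences of hyperbolic geometry in dimension two.
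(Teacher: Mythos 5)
The paper does not prove this statement at all: it is quoted as a classical theorem, with references to Nielsen \cite{Niel} and to Theorem 7.1 of \cite{Farb}, and is then used as a black box (for instance in the proof of \Cref{lem:invariant}). So there is no in-paper proof to compare against; what you have written is a sketch of the standard modern proof, which in fact establishes the stronger Nielsen realization theorem for arbitrary finite subgroups (Kerckhoff, Wolpert), whereas only the cyclic case is needed here. Your architecture is right: a fixed point of the finite cyclic group $\langle[\phi]\rangle$ in Teichm\"uller space yields an isometric representative $\hat\phi$, and the rigidity fact that an isometry of a hyperbolic surface with $\chi(\Si)<0$ isotopic to the identity is the identity upgrades ``$\hat\phi^n$ isotopic to $\mathrm{Id}$'' to $\hat\phi^n=\mathrm{Id}$ (note also that the order of $[\phi]$ only divides $n$, which still suffices). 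The rigidity argument via lifting and the limit set is correct, with the small caveat that for a surface with geodesic boundary the universal cover is a convex subset of $\mathbb{H}^2$ rather than all of it, so you should either double $\Si$ along its boundary first or observe that an isometry of such a convex domain is the restriction of an isometry of $\mathbb{H}^2$.

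The one step you should not state as you did is the fixed-point argument via Weil--Petersson: the WP metric is geodesically convex and negatively curved but famously \emph{incomplete} (pinching a curve has finite WP length), so ``any finite group of isometries of a complete CAT($0$) space has a fixed point'' does not apply verbatim. The standard repair (Wolpert, Yamada, Daskalopoulos--Wentworth) is to pass to the WP completion, the augmented Teichm\"uller space, which is a complete CAT($0$) space, take the circumcenter of an orbit there, and then argue separately that this circumcenter lies in the interior (convexity of the boundary strata and non-refraction of WP geodesics); that last step carries real content and cannot be waved away. Alternatively, Kerckhoff's convexity of geodesic length functions along earthquake paths, which you mention, gives a complete argument with no completion issues. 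Since $\Si$ has boundary you should also fix once and for all which Teichm\"uller space you use (hyperbolic metrics with geodesic boundary, or pass equivariantly to the double or to the punctured surface); with either convention the argument goes through. With these repairs your proposal is a correct proof of the cited theorem---indeed of the stronger finite-subgroup version---but it is logically much heavier than what the paper needs, which is precisely why the paper cites \cite{Niel} and \cite{Farb} instead of proving it.
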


We will use the following well-known fact:

\begin{lemma}\label{lem:fixed}
Let $\phi:\Si \rightarrow \Si$ be an orientation-preserving isometry of $\Si$. Then either the fixed points are isolated and disjoint from the boundary or $\phi$ is the identity. Moreover, if $\phi$ is 
a periodic homeomorphism, then the points with non trivial isotropy for the action generated by $\phi$ are also isolated and disjoint from the boundary. 
As a consequence, given a periodic homeomorphism $\phi$ of a surface that leaves all the boundary components
invariant, the restriction to any boundary component has the 
same order than $\phi$.
\end{lemma}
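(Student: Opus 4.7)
The plan is to analyze the differential $d\phi_p$ at each fixed point, using that an isometry of a Riemannian surface is automatically smooth (Myers--Steenrod) and, in a geodesic normal chart around a fixed point $p$, takes the form $\phi = \exp_p\circ\, d\phi_p\circ\exp_p^{-1}$. At an interior fixed point, orientation preservation forces $d\phi_p\in SO(T_p\Si)$ to be a rotation by some angle $\theta$. If $\theta\ne 0$, this formula shows $p$ is the unique fixed point of $\phi$ in a neighbourhood, hence isolated; if $\theta=0$ at some fixed point, then $\phi$ is the identity on a neighbourhood, and the locus $\{q:\phi(q)=q\text{ and }d\phi_q=\mathrm{id}\}$ is open by the same local argument and closed by continuity, so by connectedness $\phi=\mathrm{id}$ on $\Si$. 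For a boundary fixed point $p$, the differential $d\phi_p$ must preserve both the tangent line to $\partial\Si$ (since $\phi$ maps $\partial\Si$ to itself) and the inward half-plane of $T_p\Si$ (since $\phi$ sends $\Si$ into $\Si$). The only orientation-preserving orthogonal maps of $\R^2$ preserving a line setwise are the identity and rotation by $\pi$; the latter exchanges the two half-planes, so $d\phi_p=\mathrm{id}$ and the interior argument forces $\phi=\mathrm{id}$ globally. This establishes the first dichotomy.

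For the moreover clause, a ramification point of the branched covering $\Si\to\Si/\langle\phi\rangle$ is by definition a point with non-trivial stabilizer, hence a fixed point of some power $\phi^k$ with $1\leq k<n$, where $n$ denotes the order of $\phi$. Each such $\phi^k$ is again an orientation-preserving isometry distinct from the identity, so the first part applies and its fixed set is isolated and disjoint from $\partial\Si$. The ramification locus, being a finite union of such sets, retains both properties.

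Finally, for the consequence, let $\phi$ be a periodic orientation-preserving homeomorphism of order $n$ preserving every boundary component setwise; after smoothing $\phi$ and averaging a Riemannian metric over $\langle\phi\rangle$, we may assume $\phi$ is an orientation-preserving isometry. For a boundary component $C$, let $m$ be the order of $\phi|_C$; then $m\mid n$ and $\phi^m$ is an orientation-preserving isometry fixing $C$ pointwise, so if $m<n$ the first part would force $\phi^m=\mathrm{id}$, contradicting $m<n$. Thus $m=n$. The main obstacle will be the boundary fixed-point case, where one must invoke both orientation preservation and the fact that $\phi$ cannot exchange the inward and outward sides of $\partial\Si$ in order to exclude the $\pi$-rotation; the remaining ingredients are routine linear algebra together with connectedness of $\Si$.
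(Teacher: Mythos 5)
Your argument is correct and is the standard proof of this statement; note that the paper itself offers no proof at all (the lemma is quoted as a ``well-known fact''), so there is no argument of the authors to compare yours against. Two small points deserve attention. First, at a boundary fixed point the identity $\phi=\exp_p\circ d\phi_p\circ\exp_p^{-1}$ is only available on the inward-pointing part of a normal neighbourhood, so after concluding $d\phi_p=\mathrm{id}$ you should justify openness of the locus $\{\phi(q)=q,\ d\phi_q=\mathrm{id}\}$ at boundary points, e.g.\ by doubling $\Si$ along $\partial\Si$, or more directly by observing that $\phi|_C$ is an orientation-preserving isometry of the boundary circle with a fixed point, hence the identity on $C$, and that $\phi$ then fixes every normal (Fermi) geodesic emanating from $C$, so it is the identity on a collar and the interior identity principle finishes the job. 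Second, in the final clause the phrase ``after smoothing $\phi$'' conceals the only genuinely non-elementary ingredient: replacing a periodic \emph{homeomorphism} by a conjugate finite-order diffeomorphism is the classical theorem of Ker\'ekj\'art\'o (the averaging of the metric is then routine), so this step should be cited rather than treated as routine; alternatively, in the way the lemma is used in the paper the relevant homeomorphisms are already isometric representatives produced by Nielsen realization, for which your order argument ($m\mid n$, $\phi^m$ fixes $C$ pointwise, hence $\phi^m=\mathrm{id}$ by the first part, hence $m=n$) applies verbatim.
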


\begin{proof}
The second paragraph of the proof of Theorem 6.8 in \cite{Farb} (page 202), proves that fixed-points of isometries are isolated. That points with non trivial isotropy for the action generated by $\phi$ are isolated follows from the fact that they are fixed points for some power $\phi^m \neq id$.
	
	The last part of the statement follows by observing that removing points with non-trivial isotropy does not disconnect the surface. 
\end{proof}

\begin{notation}
\label{not:quot} Let $\phi$ be a periodic orientation preserving homeomorphism of $\Si$. We denote by $\Si^\phi$ the orbit space (which is a surface) and 
by $$p:\Si\to \Si^\phi$$ the quotient mapping. The mapping $p$ is a Galois ramified covering map. The set of points in $\Si$ whose orbit has cardinality strictly 
smaller than the order of $\phi$ are called \emph{ramification} points. Its images by $p$ are called \emph{branching} points. %Note that $\Si^\phi$ has a natural orbifold structure.

\end{notation}

\begin{remark}\label{galois}
Since the covering map $p$ is Galois, any point at the preimge by $p$ of a branching point is a ramification point. 
\end{remark}

\begin{definition}\label{def:alex}
Let $\phi$ be a periodic homeomorphism of $\Si$ that leaves a boundary component $C_i\cont\partial \Si$ invariant. 
We cap this boundary component $C_i$ with a disk $D^2$ obtaining a new surface $\Si'$. We extend $\phi$ to a periodic orientation-preserving homeomorphism of $\Si'$ as
follows: if $\theta$ is the angular and $r$ the radial coordinates for $D^2$ then we define $\Phi:D^2 \rightarrow D^2, (\theta,r)\mapsto (r,\phi(\theta))$. 
The homeomorphisms $\Phi$ and $\phi$ glue along $C_i$.
We call this extension procedure  the \emph{Alexander trick}.
\end{definition}

\begin{remark}\label{re:iso_graph} Recall that given $\phi$ and $\psi$  two homeomorphisms of $\Si$ that both leave a spine $\G$ invariant, if $\phi|_\G$ and $\psi|_\G$ are
isotopic, then $\phi$ and $\psi$ are isotopic. In other words, the isotopy type of the restriction of a homeomorphisms to an invariant spine determines the isotopy type of 
the homeomorphism of $\Si$.
\end{remark}

\begin{lemma}\label{lem:invariant} Let $\Si$ be a surface with $\partial \Si\neq \emptyset$ which is not a disk or a cylinder.
Let $\phi:\Si \rightarrow \Si$ be an orientation preserving homeomorphism.  
Then $\phi$ is periodic up to isotopy if and only if there exists $\hat{\phi}\in [\phi]$ such that there exists a spine $\G$ of $\Si$ which is invariant by $\hat{\phi}$.
\end{lemma}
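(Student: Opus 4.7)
My plan is to handle the two implications separately. The reverse direction is short; the forward direction will rely on Nielsen's realization followed by descending the problem to the quotient of a finite-order action.

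For the reverse implication, I start from $\hat\phi\in[\phi]$ preserving a spine $\G\subset\Si$. The restriction $\hat\phi|_\G$ is a self-homeomorphism of a finite graph, and the group of isotopy classes of such maps is finite (each class is determined by its induced action on the finite set of oriented edges). Consequently some power $\hat\phi^N|_\G$ is isotopic to $\mathrm{id}_\G$, and \Cref{re:iso_graph} upgrades this to an isotopy $\hat\phi^N\simeq\mathrm{id}_\Si$. Hence $[\phi^N]=[\mathrm{id}]$ and $\phi$ is periodic in $MCG(\Si)$.

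For the forward implication, I will first apply Nielsen's realization (\Cref{theo:niel}) to replace $\phi$, within its isotopy class, by a finite-order isometry $\hat\phi$ of some metric on $\Si$, and then work with the quotient map $p:\Si\to\Si^{\hat\phi}$ of \Cref{not:quot}. By \Cref{lem:fixed} the branching points of $p$ form a finite subset of the interior of $\Si^{\hat\phi}$, and $\partial\Si^{\hat\phi}\neq\emptyset$. I will then choose a spine $\G^{\hat\phi}$ of $\Si^{\hat\phi}$ passing through all the branching points and set $\G:=p^{-1}(\G^{\hat\phi})$. By construction $\G$ is $\hat\phi$-invariant; it is a bona fide embedded graph because $p$ is an unramified covering away from the branching points, while the fiber above each branching point contributes a single vertex to $\G$; and it is a spine because
\[
\Si\setminus\G \;=\; p^{-1}\bigl(\Si^{\hat\phi}\setminus\G^{\hat\phi}\bigr)
\]
is the preimage of an open collar of $\partial\Si^{\hat\phi}$ under the unramified covering $p|_{\Si\setminus\G}$, hence a disjoint union of open collars of the boundary components of $\Si$, so $\Si$ deformation retracts onto $\G$.

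The step I expect to require most care is the production of the auxiliary spine $\G^{\hat\phi}\subset\Si^{\hat\phi}$ passing through the branching points, together with a clean treatment of the low-complexity possibilities for $\Si^{\hat\phi}$. In the generic case the existence is elementary: one starts from any spine of $\Si^{\hat\phi}$ and attaches short arcs ending at each branching point, subdividing to avoid univalent vertices. The delicate point is the possibility that $\Si^{\hat\phi}$ is a disk or a cylinder, where the standard notion of spine degenerates; in these cases I will either rule the situation out by a Riemann--Hurwitz computation, using that $\Si$ is itself neither a disk nor a cylinder, or construct the invariant spine by hand (for instance taking for $\G^{\hat\phi}$ a tree joining the branching points when $\Si^{\hat\phi}$ is a disk) and run the same pullback argument to exhibit the desired $\hat\phi$-invariant spine of $\Si$.
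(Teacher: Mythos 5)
Your proposal is correct and follows essentially the same route as the paper: the reverse direction via finiteness of the isotopy classes of self-homeomorphisms of the (essential-vertex) graph together with \Cref{re:iso_graph}, and the forward direction via Nielsen realization, a spine of the quotient surface passing through all branching points, and pullback along the branched covering $p$. One small inaccuracy worth fixing: the fiber of $p$ over a branching point is a whole orbit and may consist of several ramification points, so it contributes several vertices to $\G$ rather than a single one — this does not affect the rest of your argument.
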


\begin{proof}
Assume $\phi$ is periodic up to isotopy. By Nielsen's realization Theorem we can assume that $\phi$ is periodic. 
Let $\Si^{\phi}$ be the orbit space of $\phi$. 

The quotient map $p:\Si \rightarrow \Si^{\phi}$ 
is a branched covering map whose ramification points are isolated and are contained in the interior of $\Si$ by 
\Cref{lem:fixed}. 
Pick any spine $\G^{\phi}$ for $\Si^{\phi}$ containing all the branch points.
The set $\Gamma := p^{-1}(\Gamma^\phi)$ is invariant and it is a spine for being the pre-image of a spine containing the branching locus.
% We claim that $\G:=p^{-1}(\G^{\phi})$ is an invariant spine for $\Si$. 
%Indeed, it is invariant by construction since the preimage of every point is precisely the orbit of that point. Since $\G^{\phi}$ is a spine of $\Si^{\phi}$ then there 
%exists a regular retraction $\Si^{\phi} \times I \rightarrow \G^{\phi}$. Since $\G^{\phi}$ contains all ramification points, that retraction lifts to a 
%retraction from $\Si$ to $\G$. 

Conversely, assume that an invariant spine $\G$ exists for some $\hat{\phi}\in [\phi]$. We consider the spine  only with vertices of valency greater than $2$. Since $\hat{\phi}$ leaves the spine invariant, then $\hat{\phi}$ acts as a permutation on edges and vertices. Then, there is a power of $\hat{\phi}$, say $\hat{\phi}^m$ that 
leaves all the edges and vertices invariant.
Thus, $\hat{\phi}^m|_\Gamma$ is isotopic to the identity, and hence $\hat{\phi}^m$ too.
 %Then, we just observe that given any two intervals $I, I'$, any two orientation preserving homeomorphisms $h,g:I \rightarrow I'$ are 
%isotopic relative to the boundary of $I$. Thus, $\hat{\phi}^m|_{\G}$ is isotopic to the identity. Since $\G$ is a spine, by the previous remark we have that $\hat{\phi}^m$ 
%is isotopic to the identity.  
\end{proof}

\begin{remark}In the theorem above we excluded the cases when $\Si$ is a cylinder or a disk for being trivial. In these cases every homeomorphism is isotopic to a periodic homeomorphism.
\end{remark}

Not every spine obtained in the proof of the previous lemma accepts a \tat structure such that $\sigma_\G = \phi|_\G$ (see \Cref{ex:counterexample0}). In \Cref{thm:000} we will see show how to find
one that accepts it.

\begin{notation}
\label{not:cut_aut}
If a homeomorphism $\phi$ of $\Si$ leaves a spine $\G$ invariant, then the homeomorphism lifts to a homeomorphism of $\Si_\G$ that we denote by $\widetilde{\phi}$. 
\end{notation}

If $\phi$ is a periodic homeomorphism of $\Si$ that leaves a boundary component $C_i$ invariant, then $\phi|_{C_i}$ is a 
periodic homeomorphism of $\MS^1$. Then, we can consider the usual Poincare's rotation number $rot(\phi|_{C_i})$ 
with values in $(0,1]$.

It is well known that rotation numbers classify periodic homeomorphisms of the circle up to conjugation. As an easy 
consequence we have the following:
\begin{remark}\label{re:cyl} 
Any orientation preserving periodic homeomorphism of the cylinder which leaves invariant each boundary component is conjugate to a rotation.
\end{remark}
%For a periodic homeomorphism that leaves a boundary component invariant we have a notion of rotation number associated to that boundary component using the following: 
%
%\begin{definition}\label{def:rot}
%Let $\phi:\MS^1 \rightarrow \MS^1$ be an orientation preserving periodic homeomorphism of the circle of order $q$. Let $x \in \MS^1$ and let $x^\phi$ be the orbit of $x$ by 
%$\phi$ and let $p$ be the number of points in $x^\phi$ that lie on the arc connecting $x$ with $\phi(x)$ in the positive direction. We define the rotation number of $\phi$ 
%as the quotient $\frac{p}{q}$ and we denote it by $rot(\phi)$. The fraction $\frac{p}{q}$ is always reduced and the number $p/q$ is in the interval $(0,1]$. 
%\end{definition}
%
%The above definition corresponds with Poincare's classical notion of rotation number of a homeomorphism of a circle into itself. Poincare's rotation number takes values in
%$\R/\Z$, and in $\Q/\Z$ if the homeomorphism is periodic. We are taking the unique representative in the interval $(0,1]$.

%\textbf{\begin{remark}\label{rem:cyl}
%It is well known that an orientation preserving periodic homeomorphism $\phi$ of the circle  such that $rot(\phi)=p/q$, is conjugate to the rotation of $2\pi \frac{p}{q}$ 
%radians. Hence, an orientation preserving periodic homeomorphism of the cylinder which leaves invariant each boundary component is conjugate to 
%a rotation.
%\end{remark}}

\subsection{Elements of $MCG(\Si, \partial^1 \Si)$ which are periodic in $MCG(\Si)$}

Now we take a look at the Mapping Class Group where homeomorphisms and isotopies fix pointwise the union of some boundary
components, that we denote by $\partial^1\Si\subset\partial\Si$ (recall  \Cref{not:iso}).

Consider a non-empty union $\partial^1\Si$ of the boundary components. We study the elements of 
$[\phi]_{\partial^1}\in MCG^+(\Si, \partial^1\Si)$ that are boundary-free isotopic to a periodic one.

If $\Si$ is the disk, it is clear that $MCG^+(\Si, \partial\Si)\simeq 0$. If $\Si$ is the cylinder, then $MCG^+(\Si, \partial\Si)\simeq \Z$ and it is generated by 
the right (or left) Dehn twist along a curve that is parallel to the boundary components. All its elements are boundary-free isotopic to the identity.  

\begin{remark}\label{rem:convention}
In this work we take the convention that \emph{negative} Dehn twists are \emph{right-handed} Dehn twists. See \Cref{fig:right_twist}. 
\end{remark}

\begin{figure}[!ht]
\centering
\includegraphics[scale=0.5]{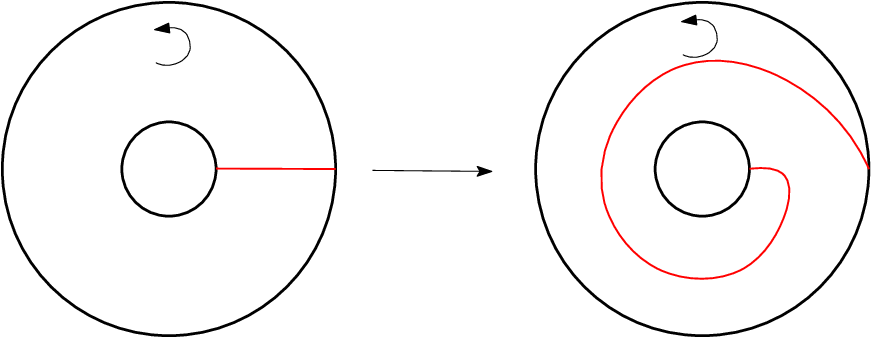}
%{right_twist}
\caption{We see an oriented annulus on the left and the image of the red curve by a right-handed Dehn twist on the right. The little curve on the top of each picture represents the orientation on the annulus.}
\label{fig:right_twist}
\end{figure}

Let $\phi$ be an orientation preserving homeomorphism of $\Si$, fixing pointwise $\partial^1\Si$ and boundary-free isotopic to
a periodic one $\hat{\phi}$.

We need the notion of fractional Dehn twist coefficients. 
The fractional Dehn twist coefficient at a boundary component can be understood as the  difference between $\phi$ and a
truly periodic representative at that boundary component. We define fractional rotation number in a slightly different way than the usual one, since it is more convenient for our applications, and allows a self contained exposition for the case that we need.

We start recalling some facts about Dehn twists. If we do not say the contrary, the letter $D$ with a subindex, denotes a negative (right-handed) Dehn twist along some curve that will be clear from the context.

\begin{lemma}\label{lem:freedehn}
Let $\Si$ be a surface with $r>0$ boundary components that is not a disk or an cylinder. Then the group generated by the Dehn twists $D_1, \ldots D_r$ along curves parallel to each boundary component is free 
abelian of rank $r$. 
\end{lemma}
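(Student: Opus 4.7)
Let $c_1,\dots,c_r$ be the boundary-parallel simple closed curves, chosen so that $c_i$ bounds, together with $C_i$, a half-open annular collar $A_i$ of $C_i$, and so that the collars $A_1,\dots,A_r$ are pairwise disjoint. Because the curves $c_i$ are pairwise disjoint, the twists $\calD_i$ pairwise commute, and hence we obtain a surjective homomorphism
\[
\Phi:\Z^r\longrightarrow \langle \calD_1,\dots,\calD_r\rangle,\qquad (n_1,\dots,n_r)\longmapsto \prod_{i=1}^r \calD_i^{n_i}.
\]
The entire proof reduces to verifying that $\Phi$ is injective, i.e.\ that if $\prod_i \calD_i^{n_i}$ is isotopic to the identity rel $\partial\Sigma$ then all the $n_i$ vanish.

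To detect $n_i$, I would construct for each $i$ an embedded arc $\alpha_i\subset \Sigma$ with endpoints on $\partial \Sigma$ (at least one endpoint on $C_i$) that meets $c_i$ transversely in exactly one point and is disjoint from every $c_j$ with $j\ne i$. This is where the hypothesis that $\Sigma$ is neither a disk nor an annulus is used: cutting $\Sigma$ along $c_i$ produces the collar $A_i$ and a complementary subsurface $\Sigma'_i$ whose boundary contains $c_i$ together with the $C_j$ for $j\ne i$. Since $\Sigma$ is not a disk or annulus, $\Sigma'_i$ is not a disk, so it admits an embedded arc $\beta_i$ from $c_i$ to some other component of $\partial\Sigma'_i$ (either some $C_j$, when $r\ge 2$, or back to $c_i$ through a handle of $\Sigma$ when $r=1$). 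The curves $c_j$ with $j\ne i$ can be pushed into tiny collars of the corresponding $C_j$, so we can choose $\beta_i$ disjoint from them; extending $\beta_i$ across the collar $A_i$ to $C_i$ yields the required $\alpha_i$.

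Once the $\alpha_i$ are in hand, the detection step is immediate: choose annular supports $N_j$ for $\calD_j$ contained in a tiny tubular neighbourhood of $c_j$, so that $N_j\cap \alpha_i=\emptyset$ whenever $j\ne i$. Then $\calD_j^{n_j}$ is literally the identity on a neighbourhood of $\alpha_i$ for $j\ne i$, so
\[
\Bigl(\prod_j \calD_j^{n_j}\Bigr)(\alpha_i)=\calD_i^{n_i}(\alpha_i).
\]
If $\prod_j \calD_j^{n_j}$ is isotopic to the identity rel $\partial\Sigma$, the arc $\calD_i^{n_i}(\alpha_i)$ is isotopic to $\alpha_i$ rel endpoints. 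But $\calD_i^{n_i}(\alpha_i)$ is obtained from $\alpha_i$ by splicing in $n_i$ full loops around $c_i$ at the unique intersection point, and in the universal cover of $\Sigma$ the two arcs have distinct lifts with the same starting point but endpoints differing by $n_i$ translates of the deck transformation dual to $c_i$. Equivalently, for any arc $\delta_i$ properly embedded in the collar $A_i$ connecting the two sides of $\alpha_i$, the geometric intersection number $i(\calD_i^{n_i}(\alpha_i),\delta_i)$ grows linearly in $|n_i|$ while $i(\alpha_i,\delta_i)$ is bounded. Either formulation forces $n_i=0$.

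The main obstacle is the small-case analysis in the construction of $\alpha_i$: ensuring an arc meeting $c_i$ exactly once exists when $r=1$ (which needs the positive genus implied by $\Sigma$ being neither a disk nor an annulus) and verifying that it can be chosen disjoint from the other $c_j$. The rigidity fact that $\calD_i^{n_i}(\alpha_i)\not\simeq \alpha_i$ rel endpoints for $n_i\ne 0$ is the standard intersection-number/universal-cover obstruction for Dehn twists and requires only that $c_i$ is not null-homotopic, which holds because $\Sigma$ is not a disk.
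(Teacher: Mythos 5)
The paper itself does not argue this lemma at all: it simply cites Lemma 3.17 of Farb--Margalit. Your attempt at a self-contained proof via arcs is therefore a different (and in principle reasonable, Alexander-method style) route, but as written it has a genuine gap in the construction of the detecting arcs, and the key displayed identity depends on it.

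The arcs you ask for do not exist. Each $c_i$ is boundary-parallel, so it separates $\Sigma$ into the collar $A_i$ (containing $C_i$) and the complementary subsurface containing all the other boundary components. Hence a properly embedded arc crosses $c_i$ with a parity dictated by which sides its endpoints lie on. If $r=1$, both endpoints of any arc lie on $C_1$, inside the collar, so every arc meets $c_1$ an \emph{even} number of times --- never exactly once; your ``back to $c_i$ through a handle'' arc meets $c_1$ twice, contradicting the ``unique intersection point'' used later in the splicing/universal-cover step. If $r\ge 2$ and the far endpoint of $\alpha_i$ lies on $C_j$ with $j\ne i$, then $\alpha_i$ must cross $c_j$ (it has to enter the collar $A_j$ to reach $C_j$), so it cannot be disjoint from all $c_j$ with $j\ne i$. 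Consequently the central equation $\bigl(\prod_j \calD_j^{n_j}\bigr)(\alpha_i)=\calD_i^{n_i}(\alpha_i)$ is false as stated: the other boundary twists do move $\alpha_i$ near its far end, and ``$\calD_j^{n_j}$ is literally the identity on a neighbourhood of $\alpha_i$'' fails.

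The strategy can be repaired, but this requires an idea you have not supplied: either detect $n_i$ \emph{locally at the collar $A_i$} (for instance by passing to the annular cover associated to $c_i$, or by a winding-number invariant of the arc inside $A_i$ rel its intersection with $\partial A_i$), accepting that the arc is also twisted near its other end; or use the multitwist intersection formula for a product of twists about disjoint curves applied to arcs, which detects each exponent separately; or avoid arcs altogether by embedding $\Sigma$ into a larger surface (say, gluing a one-holed torus to each $C_k$), using injectivity of $MCG(\Sigma,\partial\Sigma)\to MCG(S)$, and applying the free-abelian statement there, where the images of the $c_i$ are essential, pairwise non-isotopic, and detectable by closed curves --- which is essentially the content of the Farb--Margalit lemma the paper quotes. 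As it stands, the proposal's detection step would fail in every case ($r=1$ and $r\ge2$) because the arcs with the stated properties cannot be constructed.
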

\begin{proof}
Lemma 3.17 in \cite{Farb}.
\end{proof}

As a consequence, we find the following result.

\begin{lemma}\label{lem:idehn}
Let $\Si$ be a surface that is neither a disk nor a cylinder. Let $\partial^1 \Si$ be non-empty union of $r$  boundary components of $\Sigma$. 
Let $\phi$ be an orientation preserving homeomorphism of $\Si$ fixing $\partial^1 \Si$ pointwise. If $\phi$ is boundary-free isotopic to the identity then there exist 
unique integers $n_1, \ldots, n_r$ such that we have the equality
$[\phi]_{\partial^1}=[D_1^{n_1}]_{\partial^1}\cdot\ldots \cdot[D_r^{n_r}]_{\partial^1}$.
\end{lemma}

%\begin{proof}
%We can assume that $\phi$ is the identity outside a collar neighbourhood $U=\bigsqcup U_i$ of $\partial^1 \Si$ where $U_i$ is a collar for the boundary component $C_i$.  
%The restrictions $[\phi|_{U_i}]_{\partial}$ are elements in $MCG^+(U_i, \partial U_i)$. Since  the $U_i$ are cylinders, the group $MCG^+(U_i, \partial U_i)$ is generated by
%the boundary Dehn twist $[D_i]_{\partial}$ and we can use \Cref{lem:freedehn} and get that
%$[\phi]_{\partial^1}=[D_1^{n_1}]_{\partial^1}\cdot[D_2^{n_2}]_{\partial^1}\cdot\ldots\cdot [D_r^{n_r}]_{\partial^1}$ and the numbers $n_1,...,n_r$ are unique.
%\end{proof}

%In the next definition we introduce the concept of \emph{fractional Dehn twist coefficient} at each boundary component of a homeomorphism fixing pointwise some components of 
%the boundary and boundary-free isotopic to a periodic one that we are seeking for.

\begin{definition}[fractional Dehn twist coefficient]\label{def:gen_rot}
Let $\Si$ be a surface that is neither a disk nor a cylinder. Let $\partial^1 \Si$ be non-empty union of $r$  boundary components of $\Sigma$. 
Let $\phi: \Si \rightarrow \Si$ be a homeomorphism fixing pointwise $\partial^1\Si$ and boundary-free isotopic to a periodic one. 
Let $m\in\N$ such that $[\phi^{m}]=[id]$. Let $t_1, \ldots, t_r$ be integers such that 
$[\phi^{m}]_{\partial^1}=[D_1^{t_1}]_{\partial^1}\cdot\ldots\cdot [D_r^{t_r}]_{\partial^1}$. We define the %rational 
\emph{fractional Dehn twist coefficient} at $C_i$ by 
$$rot_{\partial^1}(\phi,C_i):=t_i/m.$$
\end{definition}

Note that the  fractional Dehn twist coefficients do not depend on the number $m$ we choose to compute them or the representative $\phi\in[\phi]_{\partial^1}$.

Now we describe how to compute the fractional Dehn twist coefficient in terms of an invariant spine.
First of all, we observe that in the isotopy class fixing $\partial^1\Si$ pointwise there is always a representative 
fixing a spine. Indeed, the following lemma is elementary:
\begin{lemma}
\label{lem:collar}
Let $\phi$ be an orientation preserving homeomorphism of a surface $\Si$ which fixes pointwise a non-empty union $\partial^1\Si$ of boundary components, and which is
boundary-free isotopic to a periodic homeomorphism $\hat{\phi}$ . Then there
there exists a collar $U$ of $\partial^1\Si$, a homeomorphism $f:\Si\to \overline{\Si\setminus U}$, and a homeomorphism $\psi$ of $\Si$ such that: $\psi$ is 
isotopic relative to $\partial^1\Si$ to $\phi$ and the restriction
$\psi|_{\overline{\Si\setminus U}}$ is periodic and equal to $f\comp \hat{\phi}\comp f^{-1}$.

In particular $\psi$ leaves a spine $\G$ invariant and $\psi|_{\G}$ is periodic. 
\end{lemma}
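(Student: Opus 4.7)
The plan is to produce $\psi$ by (i) conjugating a Nielsen realization of $[\phi]$ into a surface with a collar removed, (ii) extending across the collar by a twist-interpolation that keeps $\partial^1\Si$ pointwise fixed, and (iii) correcting by boundary-parallel Dehn twists supported in the collar so as to land in the class $[\phi]_{\partial^1}$. I would assume, as the hypotheses allow, that $\Si$ is neither a disk nor a cylinder, the remaining cases being trivial.

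First, apply \Cref{theo:niel} to replace $\hat\phi$ by a genuinely periodic representative in its free isotopy class; the Nielsen theorem also gives a metric in which $\hat\phi$ is an isometry. Because $\phi$ fixes each component $C_i\cont \partial^1\Si$ pointwise and $[\phi]=[\hat\phi]$ in $MCG^+(\Si)$, $\hat\phi$ preserves each $C_i$ setwise, so $\hat\phi|_{C_i}$ is a periodic circle homeomorphism, necessarily conjugate to a rotation $R_{\alpha_i}$. Using the invariant metric one obtains an $\hat\phi$-invariant collar $V_i\cong C_i\times[0,\epsilon_i]$ of each $C_i$ on which $\hat\phi$ acts as $(\theta,r)\mapsto(R_{\alpha_i}(\theta),r)$. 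Choose a smaller collar $U\cong\partial^1\Si\times[0,1]$ of $\partial^1\Si$ strictly inside $\bigcup V_i$, together with a squeezing homeomorphism $f\colon \Si\to\overline{\Si\setminus U}$ which is the identity outside a slightly larger collar.

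Next, define $\psi_0$ by setting $\psi_0:=f\comp\hat\phi\comp f^{-1}$ on $\overline{\Si\setminus U}$, and on $U$ by the interpolation $\psi_0(\theta,r):=(R_{r\alpha_i}(\theta),r)$ in the collar coordinates of each component. This is well defined and continuous at the interface $\partial U\setminus\partial^1\Si$ because the inner-boundary values are the rotations $R_{\alpha_i}$, which agree with the boundary values of $f\hat\phi f^{-1}$. By construction $\psi_0$ fixes $\partial^1\Si$ pointwise, agrees with $f\hat\phi f^{-1}$ on $\overline{\Si\setminus U}$, and is boundary-free isotopic to $\hat\phi$, hence to $\phi$. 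Therefore $\psi_0\comp\phi^{-1}$ lies in $MCG^+(\Si,\partial^1\Si)$ and is boundary-free isotopic to the identity. By the same argument used for \Cref{lem:idehn} (capping off the components of $\partial^1\Si$ and using \Cref{lem:freedehn}), any such class is a product $\prod_i[\calD_i^{n_i}]_{\partial^1}$ of Dehn twists along curves $\gamma_i\subset U$ parallel to the components $C_i\cont \partial^1\Si$. Define
\[
\psi := \Bigl(\prod_i \calD_i^{-n_i}\Bigr)\comp\psi_0,
\]
with each $\calD_i$ supported inside $U$. Then $\psi$ still agrees with $f\comp\hat\phi\comp f^{-1}$ on $\overline{\Si\setminus U}$ and $[\psi]_{\partial^1}=[\phi]_{\partial^1}$, as required.

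Finally, for the last claim: $\psi|_{\overline{\Si\setminus U}}$ is periodic, so \Cref{lem:invariant} applied to the surface $\overline{\Si\setminus U}$ (homeomorphic to $\Si$, hence not a disk or a cylinder) gives a spine $\G\cont\overline{\Si\setminus U}$ with $\psi(\G)=\G$ and $\psi|_\G$ periodic; since $\overline{\Si\setminus U}$ is a deformation retract of $\Si$, the graph $\G$ is also a spine of $\Si$, and $\psi$ leaves it invariant because it preserves $\overline{\Si\setminus U}$. The main obstacle is the correction step: one needs the fact that the kernel of $MCG^+(\Si,\partial^1\Si)\to MCG^+(\Si)$ is freely generated by Dehn twists along curves parallel to the components of $\partial^1\Si$, and then one must check that these twists can be realized with support inside the chosen collar $U$ so that the conjugated periodic map on $\overline{\Si\setminus U}$ is preserved untouched.
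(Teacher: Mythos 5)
Your argument is correct, and it is worth noting that the paper itself offers no proof of this lemma (it is dismissed as ``elementary''), so your write-up actually supplies the missing details. Your route --- Nielsen realization, an $\hat\phi$-invariant metric collar of $\partial^1\Si$ in which $\hat\phi$ is a product rotation, the radial squeeze $f$, the linear rotation interpolation on $U$, and then a correction by boundary-parallel Dehn twists supported in $U$ using \Cref{lem:idehn} --- is a complete and legitimate way to do this, and the order of composition you chose (twists on the left, supported in $U$) indeed leaves $\psi|_{\overline{\Si\setminus U}}=f\comp\hat\phi\comp f^{-1}$ untouched while adjusting the class in $MCG^+(\Si,\partial^1\Si)$. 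Two small points deserve one line each rather than ``by construction'': (i) you should take $f$ to be the radial squeeze in the Fermi coordinates of the invariant collar, so that $f\comp\hat\phi\comp f^{-1}$ really restricts to $R_{\alpha_i}$ on the interface circle in the same angular coordinate used for the interpolation; (ii) the claim $[\psi_0]=[\hat\phi]$ in $MCG^+(\Si)$ follows because $\psi_0\comp\hat\phi^{-1}$ is supported in the invariant collars, hence is a product of boundary-parallel twists, which are trivial in the boundary-free mapping class group. Also note that you invoke \Cref{lem:invariant} for the genuinely periodic map $f\comp\hat\phi\comp f^{-1}$ itself; strictly you are using the construction in its proof (preimage of a spine through the branch points of the quotient), which is exactly how the paper uses it later in the proof of \Cref{thm:000}, so this is fine. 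Finally, an alternative that avoids the twist-correction step altogether is to interpolate on $U$ by the trace on $\partial^1\Si$ of the given free isotopy from $\phi$ to $\hat\phi$ (rather than by the linear rotation), which produces a $\psi$ that is isotopic to $\phi$ relative to $\partial^1\Si$ by construction; your version instead pays for the arbitrary interpolation with \Cref{lem:idehn}, which is available in the paper, so both work, with yours requiring the (correctly handled) exclusion of the disk and cylinder.
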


%\green{Esta prueba la quitaría.}
%\begin{proof} Let $\Phi:\Si\times I\to \Si$ be the boundary-free isotopy between $\Phi_0=\hat{\phi}$ and $\Phi_1=\phi$. 
%Let $f:\Si\to \overline{\Si\setminus U}$ be some homeomorphism. Consider a trivialization of a collar $U$ of $\partial^1\Si$ as 
%$U\cong \partial^1\Si\times I$ where $\partial^1\Si\cap U$ corresponds to $\partial^1\Si\times{\{1\}}$. We define $\psi$ as follows: 
%$$\psi|_{\overline{\Si\setminus U}}=f\comp \hat{\phi}\comp f^{-1},\ \ \ \ \psi|_{U}:=\Phi|_{\partial^1\Si\times I}.$$ 
%It is easy to give an isotopy relative to the boundary between $\Phi$ and $\Psi$. 
%
%The last assertion is \Cref{lem:invariant} applied to $\psi|_{\overline{\Si\setminus U}}$.
%\end{proof}

Let $\Si$ be an oriented surface with non-empty boundary that is neither a disk nor a cylinder. 
Let $\phi$ be an orientation preserving homeomorphism of $\Si$ that fixes a non-empty union $\partial^1\Si$ of components of the boundary and which is 
boundary-free isotopic to a periodic one. Let $A$ be the union of the remaining components of the boundary. Suppose that there exists a relative spine $(\G,A)$ in $(\Si,(\G,A))$ 
which is invariant by $\phi$.

We cut $\Si$ along $\G$ into a disjoint union of cylinders, one for each component $C_i$ of $\partial^1\Si$. We use \Cref{not:cut,not:cut_aut}. 
We lift the retraction $\Si\to \G$  to a retraction $\Si_\G\to\widetilde{\G}$ and the homeomorphisms $\phi$ to a homeomorphism $\widetilde{\phi}$ of $\Si_\G$. 
Let $\frac{p_i}{n}$ be the rotation number of $\widetilde{\phi}|_{\widetilde{\G}_i}$. Choose in the cylinder $\widetilde{\Si}_i$ a retraction line $L_i$ from 
$C_i$ to $\widetilde{\G}_i$. Consider the orientation in $\widetilde{\Si}_i$ inherited from the orientation in $\Sigma$.
We take the classes $[L_i]$ and $[\phi(L_i)]$ in  $H_1(\widetilde{\Si}_i, \partial \widetilde{\Si}_i)$. The class 
$$\widetilde{\phi}|_{\widetilde{\Sigma}_i}^n([L_i])-[L_i]$$ 
belongs to  $H_1(\widetilde{\Si}_i)$ since $\widetilde{\phi}|_{\widetilde{\Sigma}_i}^n$ is the identity at the boundary.
Let 
$$k_i:=(\widetilde{\phi}|_{\widetilde{\Sigma}_i}^n([L_i])-[L_i])\cdot [L_i],$$ 
that is the oriented intersection number of the two homology classes.

%\green{Aqui va una version corta del lema 4.25 donde he evitad o la  definicion $rot(\phi, \G, C_i)$.. }
\begin{lemma}
\label{lem:equal_rot}
We have the equality $rot_{\partial^1}(\phi,C_i)= k_i/n$. In particular $k_i$ does not depend on the chosen spine $\G$ or even on the
representative of $[\phi]_{\partial^1}$.
\end{lemma}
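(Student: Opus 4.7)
The plan is to choose a representative that is literally periodic on the spine (so that $\widetilde\phi^n|_{\widetilde\G_i}=\mathrm{id}$ and the intersection number $k_i$ is unambiguous), and then to identify $[\phi^n]_{\partial^1}$ as an explicit product of Dehn twists $\prod_i[D_i^{k_i}]_{\partial^1}$; the formula then follows immediately from \Cref{def:gen_rot}.

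First I would apply \Cref{lem:collar} to obtain a representative $\psi\in[\phi]_{\partial^1}$ and a collar $U$ of $\partial^1\Si$ such that $\psi|_{\overline{\Si\setminus U}}$ is periodic (of order $N$ equal to the order of $\hat\phi$) and fixes an invariant spine $\G\subset\overline{\Si\setminus U}$. By \Cref{lem:fixed}, the order of $\psi|_{\overline{\Si\setminus U}}$ on each invariant boundary circle $\widetilde\G_i\subset\Si_\G$ equals $N$, so all rotation numbers admit $n=N$ as a common denominator and $\widetilde\psi^n|_{\widetilde\G_i}=\mathrm{id}$. Moreover $\psi^n|_{C_i}=\mathrm{id}$ since $\psi$ fixes $\partial^1\Si$ pointwise, and $[\psi^n]=[\hat\phi^n]=[\mathrm{id}]$ in $MCG(\Si)$. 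Hence by \Cref{def:gen_rot} it suffices to show $[\psi^n]_{\partial^1}=\prod_i[D_i^{k_i}]_{\partial^1}$.

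On each cylinder $\widetilde\Si_i$ the restriction $\widetilde\psi^n$ is the identity on both boundary circles and thus represents a well-defined element of $MCG^+(\widetilde\Si_i,\partial\widetilde\Si_i)\cong\Z$. I would then invoke the standard fact that, in an oriented annulus $A$, a boundary-fixing homeomorphism $\alpha$ is isotopic rel $\partial A$ to $D^k$ (for $D$ the Dehn twist along the core in the convention of \Cref{rem:convention}) where $k$ equals the signed intersection $(\alpha_*[L]-[L])\cdot[L]$, for any oriented arc $L$ between the two boundary components. Applied to $\widetilde\psi^n|_{\widetilde\Si_i}$ and $L_i$, this gives precisely $[\widetilde\psi^n|_{\widetilde\Si_i}]_\partial=[\widetilde D_i^{k_i}]_\partial$, with $\widetilde D_i$ the Dehn twist along the core of $\widetilde\Si_i$.

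Finally, because $\widetilde\psi^n$ is the identity on all of $\widetilde\G$, it descends via $g_\G$ to $\psi^n$, and the description on each piece shows that $\psi^n$ is isotopic relative to $\partial^1\Si$ to $\prod_i D_i^{k_i}$, where $D_i$ is the image under $g_\G$ of $\widetilde D_i$, a Dehn twist along a curve parallel to $C_i$. Combining this with \Cref{def:gen_rot} at $m=n$ gives $\mathrm{rot}_{\partial^1}(\phi,C_i)=k_i/n$; the independence of $k_i$ from the choice of spine and representative then follows from the manifest invariance of the left-hand side. The main obstacle will be book-keeping signs — tracking the induced orientation on $\widetilde\Si_i$, the orientation of $L_i$ from $C_i$ to $\widetilde\G_i$, and the sign convention of \Cref{rem:convention} — so that the intersection formula yields exactly $k_i$ rather than $-k_i$.
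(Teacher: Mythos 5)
The heart of your argument is exactly the paper's one\nobreakdash-line proof: since $\widetilde{\phi}^n|_{\widetilde{\Si}_i}$ fixes both boundary circles of the cylinder, the rel-boundary mapping class group of the annulus (which is $\Z$, detected by the arc $L_i$) identifies it with the $k_i$-th power of a Dehn twist along the core, these glue to give $[\phi^n]_{\partial^1}=[D_1^{k_1}]_{\partial^1}\cdots[D_r^{k_r}]_{\partial^1}$, and the formula follows from \Cref{def:gen_rot} together with the uniqueness of the exponents in \Cref{lem:idehn}. That part is correct and coincides with the paper.

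The gap is in how you handle the choice of representative. The lemma is a statement about the \emph{given} $\phi$ and the \emph{given} invariant spine $\G$: the numbers $k_i$ and $n$ are defined from that data in the paragraph preceding the statement. You instead discard it, produce via \Cref{lem:collar} a new representative $\psi$ with a new invariant spine, and prove the equality for the $k_i$ computed from \emph{that} choice. Consequently the stated equality for the original $(\phi,\G)$ is never established, and your closing inference --- that independence of $k_i$ ``follows from the manifest invariance of the left-hand side'' --- is circular: invariance of $rot_{\partial^1}(\phi,C_i)$ transfers to $k_i/n$ only once the equality is known for \emph{every} admissible spine and representative, which is precisely what is missing after you have treated a single, specially constructed one. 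The repair is simply to delete the change of representative: the standing assumption in the setup (and in every use of the lemma, e.g.\ \Cref{cor:iso_rot}, in line with \Cref{lem:collar}) is that $\phi$ is periodic on the invariant spine, so $\widetilde{\phi}^{\,n}$ is already the identity on $\partial\widetilde{\Si}_i$ and your annulus argument applies verbatim to the given data; with the equality proved for all such pairs, the ``in particular'' clause then follows honestly. (You also quietly replace the statement's $n$, the denominator of the rotation number on $\widetilde{\G}_i$, by the order $N$ of $\hat{\phi}$; this is harmless for the ratio, since the intersection number scales linearly in the chosen power, but it is another instance of proving the formula for data other than the data fixed in the statement.)
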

\begin{proof}
%Let $rot(\phi,\G, C_i)=k_i/n$ be the fractional Dehn twist coefficients as in \Cref{def:rot_G}. 
Note that $\phi^n$ fixes $\G$ and that the lifting 
$\widetilde{\phi}^n|_{\widetilde{\Si}_i}$ is isotopic relative to the boundary to the composition of $k_i$ right boundary Dehn twist if $k_i$ is positive (and $-k_i$ left Dehn 
twists if $k_i$ is negative) around the boundary component $C_i$. Then $[\phi^n]_{\partial^1}=[D_1]^{k_1}\cdot...\cdot [D_r]^{k_r}$ and the result follows. 
\end{proof}

\begin{corollary}
\label{cor:iso_rot}
Let $g,h: \Si \rightarrow \Si$ be two homeomorphisms that fix pointwise a non empty union $\partial^1\Si$ of components of the boundary $\partial\Si$. Let $A$ be the union
of the remaining boundary components.
Assume that both preserve a common relative spine $(\G,A)$ and that they coincide
and are periodic at it. Then the equality $rot_{\partial}(g,C_i)=rot_{\partial}(h,C_i)$ holds for every $i$ if and only if $h$ and $g$ are isotopic relative to 
$\partial^1\Si$.
\end{corollary}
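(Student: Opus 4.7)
The forward direction is straightforward: if $g$ and $h$ are isotopic relative to $\partial^1\Si$, then $[g^m]_{\partial^1} = [h^m]_{\partial^1}$ for every $m$. Choosing any $m$ with $[g^m]=[h^m]=[id]$ in $MCG(\Si)$ (such $m$ exists because $g$ and $h$ are periodic on the invariant spine $\G$, hence periodic up to free isotopy by \Cref{lem:invariant}), the Dehn twist exponents $t_i$ in \Cref{def:gen_rot} agree, so $rot_{\partial^1}(g,C_i)=rot_{\partial^1}(h,C_i)$ for every $i$.

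For the converse, let $n$ be the common order of $g|_\G=h|_\G$, and set $f:=g\cdot h^{-1}$. Since $g|_\G=h|_\G$, the map $f$ is the identity on $\G$, and by hypothesis it also fixes $\partial^1\Si$ pointwise. Cutting along $\G$ and using the cylinder decomposition (\Cref{not:cut}), the lifted map $\widetilde f|_{\widetilde{\Si}_i}$ fixes both boundary components of the annulus $\widetilde{\Si}_i$ pointwise. The mapping class group of an annulus rel both boundary components is freely generated by the boundary Dehn twist (\Cref{lem:freedehn} applied to the cylinder), so $\widetilde f|_{\widetilde{\Si}_i}$ is isotopic rel $\partial \widetilde{\Si}_i$ to $D_i^{m_i}$ for a unique $m_i\in\Z$. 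Gluing these isotopies yields $[f]_{\partial^1}=[D_1^{m_1}]_{\partial^1}\cdots [D_r^{m_r}]_{\partial^1}$ in $MCG(\Si,\partial^1\Si)$.

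It remains to show $m_i=0$. The boundary Dehn twists $[D_i]_{\partial^1}$ are central in $MCG(\Si,\partial^1\Si)$: indeed, every class has a representative that is the identity on a collar of $C_i\subset\partial^1\Si$, which commutes with a representative of $D_i$ supported in that collar. Writing $g=fh$ and using centrality,
\[
[g^n]_{\partial^1} = ([f]_{\partial^1}\cdot[h]_{\partial^1})^n = [f]_{\partial^1}^n\cdot[h]_{\partial^1}^n = \Big(\prod_i [D_i^{n m_i}]_{\partial^1}\Big)\cdot [h^n]_{\partial^1}.
\]
By the proof of \Cref{lem:equal_rot} applied to $g$ and $h$ with the same spine and the same $n$, we have $[g^n]_{\partial^1}=\prod_i [D_i^{k_i^g}]_{\partial^1}$ and $[h^n]_{\partial^1}=\prod_i [D_i^{k_i^h}]_{\partial^1}$ with $k_i^g=n\cdot rot_{\partial^1}(g,C_i)$ and $k_i^h=n\cdot rot_{\partial^1}(h,C_i)$. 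Comparing these two expressions and applying \Cref{lem:freedehn} to the free abelian subgroup generated by $[D_1]_{\partial^1},\dots,[D_r]_{\partial^1}$, we get $k_i^g = n m_i + k_i^h$, hence $m_i = rot_{\partial^1}(g,C_i)-rot_{\partial^1}(h,C_i)=0$ for every $i$, whence $[g]_{\partial^1}=[h]_{\partial^1}$. The main technical point is the centrality of the $[D_i]_{\partial^1}$, without which the exponent-manipulation $[fh]^n=[f]^n[h]^n$ would not be legitimate; the rest reduces to bookkeeping of Dehn twist exponents in the free abelian subgroup afforded by \Cref{lem:freedehn}.
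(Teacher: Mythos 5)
Your proof is correct and follows essentially the route the paper intends for this corollary: cut along the common invariant spine, recognize $g\circ h^{-1}$ (which is the identity on $\G$ and on $\partial^1\Si$) as a product of boundary Dehn twists on the cylinders of $\Si_\G$, and then match the twist exponents against the fractional Dehn twist coefficients via \Cref{lem:equal_rot} and the uniqueness coming from \Cref{lem:freedehn}, the centrality of boundary twists making the exponent bookkeeping legitimate. One small quibble: \Cref{lem:freedehn} explicitly excludes the annulus, so for the fact that the mapping class group of the cylinder relative to its boundary is infinite cyclic you should instead cite the paper's explicit remark to that effect in Section \ref{sec:iso}; this does not affect the argument.
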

%\green{Quitaria la siguiente prueba aunque quizas nos la vuelva a pedir el referee...}
%\begin{proof}
%The two properties we want to prove equivalent are straightforward equivalent to the equality $rot_\partial (f,\G,C_i)=rot_\partial(g,\G,C_i)$. 
%\end{proof}

\begin{corollary}
\label{cor:rot_red} 
Let $\phi:\Si\to \Si$ be a homeomorphism that fixes pointwise a non-empty union $\partial^1\Si$ of components of the boundary, and that is isotopic to a periodic 
homeomorphism $\hat{\phi}$. Let $C_i$ be a component in $\partial^1\Si$. Then the usual rotation number up to an integer $rot(\hat{\phi}|_{C_i})$ equals 
$|rot_{\partial^1}(\phi, C_i)-\left\lfloor  rot_{\partial^1}(\phi, C_i)\right\rfloor|$ where $\left\lfloor x\right\rfloor$ is the biggest integer less that $x$. 
\end{corollary}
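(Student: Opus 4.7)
The plan is to reduce the computation to a tractable representative and then apply \Cref{lem:equal_rot} on an invariant spine. Since the fractional Dehn twist coefficient depends only on the class modulo isotopy relative to $\partial^1\Si$, by \Cref{lem:collar} I can replace $\phi$ by a representative $\psi$ that leaves an invariant relative spine $(\G,A)$ and that coincides, outside a collar $U$ of $\partial^1\Si$, with a conjugate of the truly periodic map $\hat\phi$; in particular $\psi|_\G$ is periodic.

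Next I would verify that the rotation number of $\widetilde\psi|_{\widetilde\G_i}$ equals $rot(\hat\phi|_{C_i})=p/n$: the component of $\G$ bordering the cylinder $\widetilde\Si_i$ and the circle $C_i$ are both invariant circles for the periodic piece inside $\widetilde\Si_i$, and under the collar-retraction conjugation their orbit structures are identified, so the rotation numbers coincide. Applying \Cref{lem:equal_rot} then yields $rot_{\partial^1}(\phi,C_i)=k_i/n$ with
\[
k_i=\bigl(\widetilde\psi|_{\widetilde\Si_i}^n([L_i])-[L_i]\bigr)\cdot [L_i].
\]
To pin down $k_i$, I would parametrize the cylinder as $S^1\times I$ with $C_i=S^1\times\{1\}$ and $\widetilde\G_i=S^1\times\{0\}$. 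By \Cref{re:cyl}, after an isotopy supported in $\widetilde\Si_i$ that fixes both boundary components setwise, $\widetilde\psi|_{\widetilde\G_i}$ becomes the rigid rotation by $p/n$. Then $\widetilde\psi^n$ is the identity on both boundary circles and lifts to a map of the universal cover $\R\times I$ that translates by $p$ at $t=0$ and by $0$ at $t=1$. Such a homeomorphism of the annulus is, relative to the boundary, the $p$-th power of a boundary Dehn twist. Hence $|k_i|=p$ and the claimed equality $|rot_{\partial^1}(\phi,C_i)-\lfloor rot_{\partial^1}(\phi,C_i)\rfloor|=p/n$ follows by passing to the fractional part.

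The main obstacle I anticipate is tracking orientation conventions in order to verify that the absolute value of the fractional part actually recovers $p/n\in[0,1)$ rather than $1-p/n$. This requires aligning the orientation of the retraction line $L_i$ appearing in \Cref{lem:equal_rot} with the rotation direction on $\widetilde\G_i$ induced by the orientation of $\Si$, and matching the handedness convention of \Cref{rem:convention}. Once those sign choices are made consistently, the identification of $k_i/n$ with $p/n$ is immediate from the annulus computation above.
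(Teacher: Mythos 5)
Your overall strategy (pass to the representative of \Cref{lem:collar} with an invariant spine, identify the rotation number of $\widetilde\psi|_{\widetilde\G_i}$ with $rot(\hat\phi|_{C_i})=p/n$, and then compute $rot_{\partial^1}(\phi,C_i)=k_i/n$ via \Cref{lem:equal_rot} inside the cylinder $\widetilde\Si_i$) is the natural one, and it is essentially what the paper leaves implicit, since the corollary is stated there without proof as a direct consequence of \Cref{def:gen_rot} and \Cref{lem:equal_rot}. However, one step of your cylinder computation is false as written: the claim that a lift of $\widetilde\psi^n$ to $\R\times I$ ``translates by $p$ at $t=0$ and by $0$ at $t=1$'', hence $|k_i|=p$. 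The restrictions of $\widetilde\psi$ to the two boundary circles only determine the translation numbers of a lift modulo $\Z$; equivalently, they determine $k_i$ only modulo $n$. The extra integer is exactly the information carried by the fractional Dehn twist coefficient beyond the rotation number: composing $\phi$ with a full boundary Dehn twist supported in a collar of $C_i$ changes $k_i$ by $\pm n$ without changing $\widetilde\psi|_{\widetilde\G_i}$ or $\widetilde\psi|_{C_i}$ at all. So whenever $|rot_{\partial^1}(\phi,C_i)|\geq 1$ (which happens for the monodromy examples the paper cares about, e.g.\ coefficient $7/3$ with $p/n=1/3$), the equality $|k_i|=p$ fails, even though the statement of the corollary remains true.

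The gap is repairable with a small weakening: from the annulus picture you may only conclude that the two translation numbers of a lift of $\widetilde\psi^n$ are congruent to $p$ and to $0$ modulo $n$ respectively, hence $k_i\equiv \pm p \pmod{n}$, i.e.\ $rot_{\partial^1}(\phi,C_i)\equiv \pm p/n \pmod{\Z}$. That congruence is all the corollary needs, since its conclusion only concerns the fractional part $|rot_{\partial^1}(\phi,C_i)-\lfloor rot_{\partial^1}(\phi,C_i)\rfloor|$. With that correction, the remaining issue you flag yourself (aligning the orientation of $\widetilde\G_i$, of the retraction line $L_i$, and the handedness convention of \Cref{rem:convention}, so that one obtains $p/n$ rather than $1-p/n$) is genuine but routine, and is exactly the bookkeeping already fixed in \Cref{lem:equal_rot}.
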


\begin{remark}
We observe that by our convention on \Cref{rem:convention}, negative (or equivalently right-handed Dehn twists) produce positive fractional Dehn twist coefficients. This is in accordance with the previous notions of fractional Dehn twist coefficient in the literature.
\end{remark}

\section{\Tat twists and periodic classes of $MCG(\Sigma)$.}
\label{sec:tatatat}

In this section we introduce \tat twists, which are mapping classes associated with \tat graphs, and provide a wide generalization of Dehn twits. In fact we show that all periodic classes of $MCG(\Sigma)$ that leave invariant at least one boundary component is a \tat twist in two different ways:
\begin{itemize}
\item to a \tat graph we associate an element of $MCG( \Sigma, \partial^1\Sigma)$, that we call the \tat twist associated with the \tat graph (see \Cref{def:phi_Gtau}) in \Cref{sec:signed}, and prove that all elements of $MCG( \Sigma, \partial\Sigma) $ periodic in $MCG(\Si)$, with positive fractional Dehn twits coefficient are \tat twists. In fact enlarging the definition of \tat graph to that of the signed \tat graphs (see \Cref{def:tat_tau}), we can represent all elements of $MCG( \Sigma, \partial\Sigma) $ periodic in $MCG(\Si)$, both with positive and negative fractional Dehn twist coefficient. The signed \tat graphs and twists were also introduced by C. Graf in \cite{Graf1}.

\item to a \tat graph we associate a (really) periodic homeomorphisms that we call periodic \tat twist associeted with the graph (see \Cref{def:phi_Gper}) in \Cref{sub:comp_per}, and show that any periodic homeomorphism is boundary free isotopic to a periodic \tat twist.
\end{itemize} 

%In order to represent all elements of $MCG( \Sigma, \partial\Sigma) $ periodic in $MCG(\Si)$, both with positive and negative fractional Dehn twists, we need to enlarge the original definition of \tat graph in \Cref{def:tat} to the signed \tat graphs (see \Cref{def:tat_tau}). This was also introduced by C. Graf in \cite{Graf1}.

\subsection{Signed \tat graphs and representatives in $MCG(\Si, \partial^1\Si)$.}\label{sec:signed}
Our objective in this section is to represent homeomorphisms which fix pointwise a union $\partial^1\Sigma$ of components of the boundary, and that are boundary-free isotopic to a periodic one.

We start with the definition of signed \tat graph that generalizes \Cref{def:tat}. 

We will work directly for the class of relative graphs in order to avoid repetitions. The main results are the \Cref{thm:000,thm:rel}.

%\section{\Tat graphs and the group $MCG(\Si, \partial^1 \Si)$}
%\label{sec:signed}

%Our objective in this section is to represent homeomorphisms which fix pointwise some components of the boundary and that are boundary-free
%isotopic to a periodic one, as {\em signed} \tat twists. In order to model by \tat structures both right and left Dehn twists we need to enlarge the definition of a \tat 
%structure. We will work directly for the class of relative graphs in order to avoid repetitions.

Let $(\G,A)$ be a metric relative ribbon graph. Let $(\Si,\G,A)$ be a thickening, let  
$$g_\G:\Si_\G\to \Si$$
be the gluing  mapping as in \Cref{not:cut}. 

We start making an extension of \Cref{re:cut} adding point (b'): 
\begin{remark}[Definition of $\gamma_p^-$, $\omega_p^-$, $\gamma_p^0$,  $\gamma_p^+$ and $\omega_p^+$]\label{re:cut3}
%[Remark and notation] 
$\ $

\begin{itemize}
\item[(b')] for every point $p\in \G\setminus v(G)$ and every of the two possible oriented direction from $p$ along $\G$, there is a walk starting on $p$ following each of this directions, such that in every vertex $v$, the walk continues along the \emph{previous} edge in the cyclic order of $e(v)$. We denote by $\gamma^-_p$ and $\omega^-_p$ these walks of length $\pi$ and speed 1.  
Each of the oriented directions at $p$ corresponds to a point in $q\in g_\G^{-1}(p)$, which lives in a cylinder $\widetilde{\Si}_i$. These walks  are the image of the negative sense parametrization of the boundary of $\widetilde{\Si}_i$ starting at $q$.
\end{itemize}

We denote by $\gamma^+_p$ and $\omega^+_p$ the usual safe walks of \Cref{def:safe} or \Cref{re:cut} of length $\pi$ and speed 1. 
  We call $\gamma^+_p$ and $\omega^+_p$ the \emph{positive safe walks} and $\gamma^-_p$ and $\omega^-_p$ the \emph{negative safe walks}. 

In the case of points in $A$, since $A$ is oriented, we have also a positive and negative sense for a parametrization. Then, for $p\in A$,
we define $\gamma^+_p$ (respectively $\gamma^-_p$) as the parametrization from $p$ that starts along $A$ in the positive (respectively negative) sense 
and that when reaching a vertex $v$ takes the next (respectively previous) edge in the order of $e(v)$). 

We also define a \emph{safe constant walk} $\gamma_p^0:=p$.

\end{remark}

Before stating the next definition, recall that there is a bijection between the set $\calC$ of boundary components of $\Si$ not in $\calA$ and the cylinders $\widetilde{\Si}_i$'s.
Given a ``sign'' mapping $\iota:\calC\to\{0,+,-\}$, we denote by $\iota(i)$ the image by $\iota$ of the component that corresponds to $\widetilde{\Si}_i$ under the 
bijection.

\begin{definition}[Signed \tat property and graph]\label{def:tat_tau}
Let $(\G,A)$ be a metric relative ribbon graph and let $(\Si,\G,A)$ be a thickening. 
Let $\calC$ denote the set of boundary components of $\Sigma$ which do not 
belong to $A$. Fix a mapping 
$$\iota:\calC\to\{0,+,-\}.$$ 

We say that $(\G,A)$ satisfies the \emph{signed \tat property for} $\iota$ or that $(\G,A,\iota)$ is a 
\emph{signed relative \tat graph} if given any point $p$
contained at the interior of an edge the following properties are satisfied:
\begin{enumerate}
\item if $p$ does not belong to $A$ and $p\in g_\G(\widetilde{\Si}_i)\cap g_\G(\widetilde{\Si}_j)$ for some $i$, $j$, 
then we have the equality 
$$\gamma_p^{\iota(i)}(\pi)=\omega_p^{\iota(j)}(\pi).$$
\item if $p$ belongs to $A$ and $p$ belongs to $g_\G(\widetilde{\Si}_i)$, then the end point $\gamma_p^{\iota(i)}(\pi)$ of the unique signed safe 
walk starting at $p$ belongs to $A$.
\end{enumerate}
\end{definition}

\begin{notation}[Remark and Notation]
Observe that the mapping $\G\setminus v(\G)\to \G$ that sends $p\in \G\setminus v(\G)$ to  $\gamma_p^{\iota(i)}(\pi)$ extends to $v(\G)$ and define a homeomorphism of $\G$ 
that we denote by $\sigma_{(\G,\iota)}$. The proof is as the one of \Cref{lem:sigma}. 
\end{notation}

\begin{definition}[Definition of (signed) \tat twist $\phi_{(\G,A,\iota)}$]\label{def:phi_Gtau}
Let $(\G,A,\iota)$ be a signed relative \tat graph. For every thickening $(\Si, \G,A)$ and for every choice of product structure 
$\widetilde{\Si}_i\approx\widetilde{\G}_i\times I$ we consider the 
homeomorphism 
\begin{equation}\label{eq:psi}\psi_{i}:\widetilde{\G}_i\times I\longrightarrow\widetilde{\G}_i\times I\end{equation}
$$(p,s)\mapsto (\widetilde{\gamma}^{\iota(i)}_p(s\cdot \pi),s)$$
where $\widetilde{\gamma}_p^{\iota(i)}$ is the lifting of the safe walk to $\widetilde{\G}_i$.  
The homeomorphism $\psi_{i}$ of the cylinder can be visualized very easily using the universal covering of the cylinder 
as in \Cref{fig:fixed1}. 

These homeomorphisms glue well due to the properties of the signed relative \tat graph $(\G, \iota)$ and define a
homeomorphism of $(\Si, \G, A)$ that leaves $\partial^1\Si$ fixed pointwise
and $A$ invariant. We denote by $\phi_{(\Si,\G,A,\iota)}$ the resulting homeomorphism of $(\Si,A)$. We call it the induced
\textit{(signed) \tat twist}.

\end{definition}

\begin{remark} 
Given two different product structures of the cylinders, the induced \tat  twists are conjugate by a 
homeomorphism that fixes $\G$.

For two different embeddings in 
$\Si$, the  \tat twists are conjugate by the same homeomorphism of $\Sigma$ that relates the two embeddings.
\end{remark}

\begin{figure}[!ht]
\centering
\includegraphics[scale=0.5]{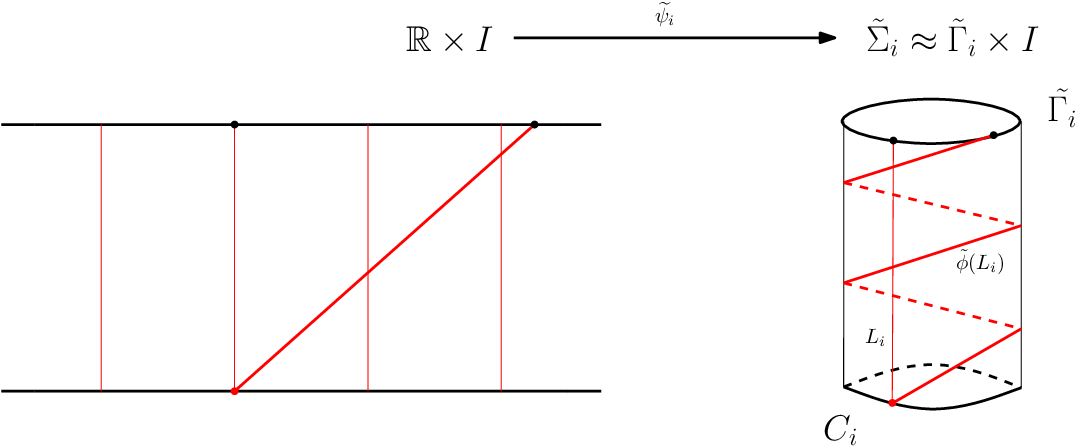}%{fixed1}
\caption{Visualization of the lifting $\widetilde{\psi}_i$ of $\psi_i$ to the universal covering of $\widetilde{\Si}_i$. You can easily read (\ref{eq:R_i}) from it. }
\label{fig:fixed1}
\end{figure}

The homeomorphism $\phi_{(\Si,\G,A,\iota)}$ leaves $\G$ invariant and by \Cref{lem:equal_rot} has obviously the following fractional Dehn twist coefficients (see  \Cref{fig:fixed1}): 
\begin{equation}\label{eq:R_i}rot_{\partial^1}(\phi_{(\Si,\G,A,\iota)},C_i)=\iota(i)\cdot\frac{\pi}{length (\widetilde{\G}_i)}.
\end{equation} 

\begin{remark}\label{rem:triv}
Observe that if $\iota(i)=0$ for some $i$, then the homeomorphism $\psi_i$ is the identity and $rot_{\partial^1}(\phi_{(\G,A,\iota)},C_i)=0$. In particular, by \Cref{cor:sigma_fix} we have that the signed \tat twist  must be the identity restricted to $\G$ and hence $rot_{\partial^1}(\phi_{(\Sigma,\G,A,\iota)},C_j) \in \Z$ for all boundary components. So $\phi_{\Si,\G,A,i}$ equals a composition of Dehn twists around curves parallel to the boundary.
\end{remark}

\begin{remark}

%Signed \tat homeomorphisms are a generalisation of Dehn twists. We will call them also \emph{signed \tat twists}. 
If all the signs are positive this notion coincides with the first author's original notion of \tat twists.
\end{remark}
\begin{example}\label{ex:Kpq3}
The homeomorphisms induced by the \tat structures on the $K_{p,q}$ ribbon graphs given in \Cref{ex:Kpq2} are the monodromies fixing the boundary of the Milnor fibrations associated to the 
singularities $x^p-y^q=0$. 
\end{example}
Before going on with the section we note that not every invariant spine admits a metric modeling the corresponding homeomorphism. Indeed, we have the following example.

\begin{example}\label{ex:counterexample0}
In \Cref{fig:counterexample0} you can find a spine $\G$ for the genus $1$ surface with one boundary components that is 
invariant by the $\pi$-rotation along the vertical axes isotoped to be the identity on both boundary components. In particular, it has fractional Dehn twist coefficient equal to $1/2$ on both boundary components.

This ribbon graph does not admit a \tat structure that models the given homeomorphism. Let $L_1$ and $L_2$ be the lengths of
the edges of the graph that meet both cylinders of $\Si_\G$. And let $L_3$ be the other one.

 By \ref{eq:R_i}, since fractional Dehn twist coefficients are the same on both boundary components, we have 
that $1/2 (L_1+L_2)=\pi$ and that $1/2 (L_1+L_2+2L_3)=\pi$. This is turn would imply the equality $L_3=0$ which is not possible since all lengths of edges must be strictly positive.

\begin{figure}[!ht]
\centering
\includegraphics[scale=0.4]{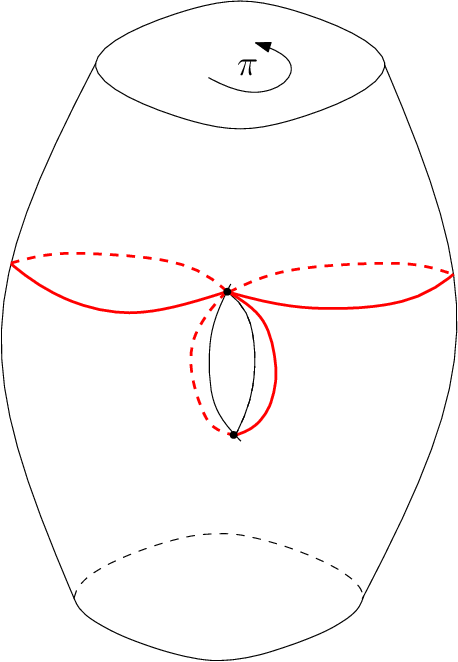}
%{nuevo_ejemplo}
%\includegraphics[scale=0.38]{rot180g1b2}
\caption{A spine for the genus $1$ surface with one boundary components that is invariant by the $\pi$-rotation along the vertical axes.}
\label{fig:counterexample0}
\end{figure}
\end{example}

In the next theorem we see that every  homeomorphism that fixes the boundary pointwise that is boundary-free isotopic to a periodic one, is a 
signed \tat homeomorphism up to isotopy fixing the boundary. We state first
the non-relative case, which improves  \cite[Theorem 3.1.1]{Graf1} as we comment on  \Cref{rem:improvement_graf}.
\begin{theorem}
\label{thm:000}
Let $\Si$ be an oriented surface with non-empty boundary.
Let ${\phi}$ be an orientation preserving homeomorphism fixing pointwise the boundary, and boundary-free isotopic to a periodic one $\hat{\phi}$. 
Then, 
\begin{enumerate}[(i)]
\item there exists a signed \tat spine $(\G,\iota)$ embedded in  $\Sigma$ that is invariant by $\phi$ such that the restriction of $\phi$ to $\G$ coincides with 
$\phi_{(\Sigma,\G,\iota)}$.
\item The isotopy classes relative to the boundary $[\phi]_{\partial}$ and $[\phi_{(\Sigma,\G,\iota)}]_{\partial}$ coincide. 
\item the homeomorphisms $\phi$ and $\phi_{\G, \iota}$ are conjugate by a homeomorphism that fixes the boundary pointwise, fixes $\G$ and is isotopic to the identity in $MCG(\Si, \partial \Si)$.
\end{enumerate}
\end{theorem}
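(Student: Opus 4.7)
The strategy is to produce an invariant spine on which $\phi$ acts periodically, equip it with a sign pattern and a metric so that the safe-walk dynamics reproduces this action and yields the prescribed fractional Dehn twist coefficients, and then deduce the isotopy and conjugation claims from the uniqueness results of Section~\ref{sec:iso}.

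\emph{Constructing the invariant spine.} By Lemma~\ref{lem:collar} we may replace $\phi$ by a representative $\psi\in[\phi]_\partial$ that is periodic of some order $n$ on $\overline{\Si\setminus U}$ for a collar $U$ of $\partial\Si$. Applying the argument in Lemma~\ref{lem:invariant} to $\psi|_{\overline{\Si\setminus U}}$ (lift a spine of the quotient containing all branch points) produces a $\psi$-invariant spine $\G\subset\Si$ with $\psi|_\G$ periodic. Subdividing and pruning as in Lemma~\ref{lem:assum}(1)--(2), we may assume that all vertices have valency $\geq 3$, that no two vertices are joined by more than one edge, and that $\psi$ permutes the edges.

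\emph{Choice of signs, lengths, and verification of (i).} For each boundary component $C_i$ set $r_i:=\mathrm{rot}_\partial(\phi,C_i)=k_i/n$ via Lemma~\ref{lem:equal_rot}, and define $\iota(i):=\mathrm{sign}(r_i)\in\{+,-,0\}$. On each cycle $\widetilde\G_i\subset\Si_\G$ the map $\psi$ restricts to a cyclic rotation of edges with Poincar\'e rotation number $p_i/n$; the congruence $|k_i|\equiv p_i\pmod n$, together with matching of signs, follows from the intersection-number definition of $k_i$ combined with Corollary~\ref{cor:rot_red}, and guarantees that a rotation by arclength $\pi$ on a circle of total length $L_i:=n\pi/|k_i|$ (when $r_i\neq 0$) coincides with the rotation induced by $\psi$ on $\widetilde\G_i$. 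We therefore impose (a) edges in a common $\psi$-orbit have equal length, and (b) $\sum_{e\subset\widetilde\G_i}\ell(e)=L_i$ whenever $r_i\neq 0$. This is a finite linear system in the edge-length variables; positive rational solvability is obtained by the clearing-denominators and subdivision trick of Lemma~\ref{lem:assum}(3). The resulting pair $(\G,\iota)$ satisfies the signed \tat property, equation~(\ref{eq:R_i}) gives the correct fractional Dehn twist coefficients, and by construction $\sigma_{(\G,\iota)}=\psi|_\G$, proving (i).

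\emph{Isotopy and conjugation.} Both $\phi$ and $\phi_{(\Si,\G,\iota)}$ fix $\partial\Si$ pointwise, preserve $\G$, agree on $\G$ (after passing to $\psi$), and, by~(\ref{eq:R_i}), have equal fractional Dehn twist coefficients at every boundary component; Corollary~\ref{cor:iso_rot} then yields $[\phi]_\partial=[\phi_{(\Si,\G,\iota)}]_\partial$, which is (ii). For (iii), cut $\Si$ along $\G$: on each cylinder $\widetilde\Si_i$ the two lifts coincide on $\widetilde\G_i\cup C_i$, and by Remark~\ref{re:cyl} they are conjugate by a homeomorphism of the cylinder fixing both boundary components pointwise and isotopic to the identity rel boundary; gluing these cylinder-wise conjugations along $\G$ yields the global conjugation. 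The main obstacle is the middle step: the arithmetic identity $|k_i|\equiv p_i\pmod n$ (with signs matching $\iota(i)$) must be checked so that the signed $\pi$-safe walk actually realizes $\psi|_{\widetilde\G_i}$, and the consistency of the resulting linear system on edge lengths then hinges on the subdivision flexibility of Lemma~\ref{lem:assum}(3).
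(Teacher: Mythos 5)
Your plan follows the paper's architecture (collar via \Cref{lem:collar}, an invariant spine coming from the quotient, signs from the fractional Dehn twist coefficients, lengths forced by (\ref{eq:R_i}), then \Cref{cor:iso_rot} for (ii) and cylinder-wise conjugation for (iii)), but there is a genuine gap at the central step of (i). You take an essentially arbitrary invariant spine (from \Cref{lem:invariant}, cleaned up by \Cref{lem:assum}) and assert that the linear system ``edge lengths constant on $\psi$-orbits, total length of $\widetilde{\G}_i$ equal to $n\pi/|k_i|$'' has a positive solution by ``the clearing-denominators and subdivision trick of \Cref{lem:assum}(3)''. That lemma does not provide this: it starts from a graph that \emph{already} carries a \tat metric and merely renormalizes it; subdivision cannot manufacture positive solutions of a system that has none. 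And positivity can genuinely fail for an invariant spine: \Cref{ex:counterexample0} exhibits a rotation-invariant spine whose constraints from (\ref{eq:R_i}) force $\frac{1}{2}(L_1+L_2)=\pi$ and $\frac{1}{2}(L_1+L_2+L_3)=\pi$, hence $L_3=0$; this is exactly the remark preceding \Cref{thm:000} that not every invariant spine admits a \tat structure modelling the homeomorphism. For this reason the bulk of the paper's proof is the explicit construction of a suitable spine $\G^{\phi'}$ in the quotient surface $\Si^{\phi'}$ — a case analysis on $genus(\Si^{\phi'})$ and on the number of branch points, with careful placement of the cut points $q_1,q_2$ — arranged so that the system (\ref{eq:3}) visibly has positive solutions and so that the preimage $p^{-1}(\G^{\phi'})$ has no univalent vertices (a requirement in \Cref{def:tat} which your proposal also never addresses). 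Without such a construction, part (i) is not established.

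A secondary, more repairable point: the claim that the congruence $|k_i|\equiv p_i \pmod n$ ``guarantees'' that the rotation by arclength $\pi$ coincides with $\psi|_{\widetilde{\G}_i}$ is too quick, since a periodic circle homeomorphism is not determined by its rotation number. The paper closes this by observing that the two maps are conjugate (same rotation number) \emph{and} have the same orbits, namely the fibres of the $n\!:\!1$ covering $\widetilde{p}|_{\widetilde{\G}_i}$ — which uses that the metric is pulled back from the quotient graph, so these fibres are evenly spaced — and that conjugate circle homeomorphisms with identical orbits coincide. Your orbit-equal-length condition would allow the same argument, but as written the coincidence on $\G$ is asserted rather than proved. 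Once (i) is in place, your treatment of (ii) via \Cref{cor:iso_rot} and of (iii) by gluing boundary-fixing conjugations on the cylinders of $\Si_\G$ agrees with the paper.
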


\begin{corollary}\label{cor:original_def}
This theorem characterises the originally defined by the first author \tat twists as (i.e. non-signed \tat twists): orientation preserving homeomorphism fixing pointwise $\partial^1\Si$ and boundary-free isotopic to a periodic one $\hat{\phi}$ with strictly positive fractional Dehn twist coefficients.
\end{corollary}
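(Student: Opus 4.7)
The plan is to derive the corollary directly from \Cref{thm:000} together with the rotation formula (\ref{eq:R_i}). The key observation is that, by (\ref{eq:R_i}),
\[
rot_{\partial^1}(\phi_{(\Si,\G,\iota)},C_i)\;=\;\iota(i)\cdot\frac{\pi}{\operatorname{length}(\widetilde{\G}_i)},
\]
so the sign of $\iota(i)\in\{0,+,-\}$ matches the sign of the fractional Dehn twist coefficient at $C_i$. In particular, the pure case $\iota\equiv +$ (A'Campo's original definition) corresponds exactly to all these coefficients being strictly positive. The corollary is thus essentially a translation between the values of $\iota$ and the signs of the $rot_{\partial^1}$.

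For the ``only if'' direction, suppose $\phi$ is isotopic relative to $\partial^1\Si$ to a pure \tat twist $\phi_{(\Si,\G,\iota)}$ with $\iota\equiv +$. Then $\phi$ is automatically boundary-free isotopic to a periodic map, namely one whose restriction to the invariant spine $\G$ is the periodic homeomorphism $\sigma_\G$ of \Cref{lem:sigma} (which lifts to a true periodic representative by the Alexander trick around $\partial^1\Si$, as used in the proof of \Cref{lem:invariant}). Applying (\ref{eq:R_i}) at each component gives $rot_{\partial^1}(\phi,C_i)=\pi/\operatorname{length}(\widetilde{\G}_i)>0$, since edge lengths of a \tat graph are strictly positive.

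For the converse, I would feed the hypotheses on $\phi$ into \Cref{thm:000} to obtain a signed relative \tat spine $(\G,\iota)$ with $\phi$ and $\phi_{(\Si,\G,\iota)}$ isotopic relative to $\partial^1\Si$. The fractional Dehn twist coefficients depend only on the class in the relative mapping class group (observed right after \Cref{def:gen_rot}), so the coefficients of $\phi$ and of $\phi_{(\Si,\G,\iota)}$ agree; applying (\ref{eq:R_i}) and the hypothesis $rot_{\partial^1}(\phi,C_i)>0$ at every $i$ forces $\iota(i)=+$ for all $i$. Hence $(\G,\iota)$ is a pure relative \tat spine and $\phi$ is, up to boundary-fixed isotopy, an original A'Campo relative \tat twist. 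I do not anticipate any real obstacle; once \Cref{thm:000} and formula (\ref{eq:R_i}) are in hand, the argument is a direct sign-bookkeeping step.
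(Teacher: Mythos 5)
Your argument is correct and is essentially the paper's intended derivation: the corollary is presented as an immediate consequence of \Cref{thm:000}, with exactly the sign bookkeeping you perform via (\ref{eq:R_i}) (in the converse direction the construction in the proof of \Cref{thm:000} even sets $\iota(i)=sign(rot_{\partial^1}(\phi,C_i))$ directly, so strict positivity yields $\iota\equiv +$, i.e.\ A'Campo's original notion). The only cosmetic point is that the freely-periodic claim in your ``only if'' direction is more naturally justified by the converse direction of \Cref{lem:invariant} (the twist preserves the spine $\G$ and restricts there to the finite-order homeomorphism $\sigma_\G$), or by the truly periodic representative of \Cref{def:phi_Gper}, rather than by the Alexander trick.
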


\begin{remark}\label{rem:improvement_graf}
We compare the above corollary with \cite[Theorem 3.1.1]{Graf1}. In the cited reference, the author proves a similar result but he enlarges the set of {\em permitted} \tat graphs, either by allowing vertices of valency $1$ or by allowing safe walks of different lengths for each boundary component. A homeomorphism leaving a spine invariant leaves the cylindrical decomposition invariant. Our notion imposes that the homeomorphism consists in rotating all the cylinders at the same speed with respect to the metric, while Graf's notion needs to allow them them rotating at different speeds in the case of no vertices of valency one (or needs vertices of valency $1$, (which is equivalent to allow different speeds). Note that Graf's proof can not be adapted to prove our result.

%This, which may seem a slight variation or improvement of Graf's result imposes a severe restriction on the combinatorics allowed. As it may be checked along the next proof such a restriction makes harder to find the graphs with the needed metric properties.
	
\end{remark}

\begin{proof}[Proof of \Cref{thm:000}]
We use \Cref{not:cut}, \Cref{not:quot} and \Cref{not:cut_aut}. 

By \Cref{lem:collar} we can assume that there exist a collar for $\partial \Si$ such that $\phi'=\phi|_{\Si\setminus U}$ is periodic. Let $n$ be the order of $\phi'$. 
Let $\Si^{\phi'}$ be the quotient surface. By abuse the notation we are writing  $\Si^{\phi'}$ instead of $(\Si \setminus U)^{\phi'}$ since they are naturally homeomorphic.

We will construct the invariant graph $\G$ as in the proof of \Cref{lem:invariant}, as the preimage by the quotient map $p:\Si\to \Si^{\phi'}$ of an appropriate spine 
$\G^{\phi'}$ for $\Si^{\phi'}$. The proof consists in giving a metric for the spine in $\Si^{\phi'}$ such that the  pullback metric in the corresponding invariant graph in $\Si$ solves the problem. 

Let us see what conditions on the lengths of the edges of $\G^{\phi'}$ have to be imposed such that the pullback-metric in $\G$ defines a \tat metric adapted to $\phi$, 
that is, so that we have the equality $\gamma_p(\pi)=\phi(p)$ for the safe walks along $\G$  and such that the given signed \tat twist $\phi_{\G, \iota}$ has the same fractional Dehn twist coefficients as $\phi$. 

We define $$R_i:=|rot_{\partial^1} (\phi,C_i)|.$$ 
The \tat structure of $\G$ has to satisfy the equality $\gamma_p(\pi)=\phi(p)$ and moreover, the rotation number of $\phi_{\G, \iota}$ at $C_i$ has to be $R_i$. 
So, by \cref{eq:R_i}, we want that for every $i$ with $R_i\neq 0$ we have 
\begin{equation}
rot_{\partial^1} (\phi,C_i)\cdot length(\widetilde{\G}_i)\cdot \iota(i)=\pi,
\end{equation}

If $rot_{\partial}(\phi, C_i)$ equals $0$, by the definition of constant safe walk (see the end of \Cref{re:cut3}), we obtain no condition. 

By the definition of $R_i$ and the fact that both $rot_{\partial^1} (\phi,C_i)$ and $\iota(i)$ have the same sign, this equation becomes: 
\begin{equation}\label{eq:1}
R_i\cdot length(\widetilde{\G}_i)=\pi,
\end{equation}

Moreover, we want the metric on $\G$ to be invariant by $\phi'$ so it has to be the pullback of a metric on $\G^{\phi'}$. We denote by $\Si^{{\phi'}}_{\G^{{\phi'}}}$ the surface obtained by cutting $\Si^{\phi'}$ along 
$\G^{{\phi'}}$ and consider the gluing map $g_{\G^{{\phi'}}}:\Si^{{\phi'}}_{\G^{{\phi'}}} \to  \Si^{\phi'}$ analogously to \Cref{not:cut}.
We consider the lifting of $p:\Si\to \Si^{\phi'}$ to the cut surfaces and we denote it by $\widetilde{p}:\Si_\G\to \Si^{\phi'}_{\G^{\phi'}}$. 
We denote by $\widetilde{p(\G_i)}$ the preimage of $p(\G_i)$ by $g_{\G^{\phi'}}$. Since
$\widetilde{p}|_{\widetilde{\G}_i}:\widetilde{\G}_i\to \widetilde{p(\G_i)}$ 
is a $n:1$ covering map, we have the equality   
\begin{equation}
\label{eq:2}
length (\widetilde{\G}_i)=n\cdot length (\widetilde{p(\G_i)}).
\end{equation} 
Note that one can easily read $length(\widetilde{p(\G_i)})$ looking at the lengths of the edges of $p(\G_i)\cont\G^{\phi'}$. 
 
Putting (\ref{eq:1})-(\ref{eq:2}) together, we have that what we need is that the equality 
\begin{equation}
\label{eq:3} 
length(\widetilde{p(\G_i)})=\frac{\pi}{n\cdot R_i}
\end{equation}
holds for all $i$ with $R_i \neq 0$.

Next, we see that finding a metric spine $\G^{\phi'}\hookrightarrow \Si^{\phi'}$ containing all branching points, and whose lengths satisfy (\ref{eq:3}) for every $i$ with $R_i\neq 0$ we prove the theorem by taking $\G = p^{-1}(\G^{\phi'})$ with the pullback metric.

Since $\G^{\phi'}$ contains the branching points, the retraction of $\Si^{{\phi'}}$ to $\G^{{\phi'}}$ lifts to a retraction of $\Si$ to
the preimage $\G:=p^{-1}(\G^{{\phi'}})$. Hence $\G$ is a spine of $\Si$.
%Prior to following with the proof we are going to show that if we get a spine $\G^{\phi'}$ whose metric satisfies the above equations for each $i$ with $R_i \neq 0$ we will have finished 
%by taking $\G = p^{-1}(\G^{\phi'})$.

Then it is clear that the graph $\G:=p^{-1}(\G^{\hat{\phi}})$ is a signed \tat graph for $\iota$ defined as $\iota(i)=sign(rot_{\partial^1}(\phi,C_i))$. Let $\phi_{(\Sigma,\G,\iota)}$ be the induced signed \tat twist. % and that $\phi|_{\G}$ equals $\phi_{(\Sigma,\G,\iota)}|_{\G}$. 
This \tat structure on $\G$ induces by construction a rotation $\widetilde{\phi}_{(\Sigma,\G,\iota)}|_{\widetilde{\G}_i}$ of rotation number $|rot_{\partial^1}(\phi, C_i)-\left\lfloor  rot_{\partial^1}(\phi, C_i)\right\rfloor|$ in each $\widetilde{\G}_i$. It is conjugate to 
$\widetilde{\phi}|_{\widetilde{\G}_i}$ since they have the same rotation number (recall \Cref{cor:rot_red}). The orbits of $\widetilde{\phi}|_{\widetilde{\G}_i}$ are the fibres of 
$\widetilde{p}|_{\widetilde{\G}_i}$. By the choice of lengths, the orbits of the \tat rotation in $\widetilde{\G}_i$ are also the fibres of
$\widetilde{p}|_{\widetilde{\G}_i}$. Conjugation between homeomorphisms of $\MS^1$ preserves the cyclic order in $\MS^1$ of a point $p$ and its iterations. Then, since
$\widetilde{\phi}|_{\widetilde{\G}_i}$ and $\widetilde{\phi}_{(\Sigma,\G,\iota)}|_{\widetilde{\G}_i}$ are conjugate with the same orbits, they coincide. Then $\phi|_{\G}$ and $\phi_{(\Sigma,\G,\iota)}|_{\G}$ coincide.

By \Cref{re:iso_graph} we have that $\phi$ and $\phi_{(\Sigma,\G,\iota)}$ are isotopic since they coincide on $\G$.

%For this, just observe that $\tilde{\phi}|_{\TG_i} : \TG_i \rightarrow \TG_{i}$ is a rotation with rotation number $|rot_{\partial^1}(\phi, C_i)-\left\lfloor  rot_{\partial^1}(\phi, C_i)\right\rfloor|$ (recall \Cref{cor:rot_red}) which is the same rotation induced by the corresponding signed safe walk on the cylinder $\TG_i \times [0,1]$.

We also see that, by the imposed metric,  
$rot_{\partial^1}(\phi_{(\Sigma,\G,\iota)},C_i)=rot_{\partial^1} (\phi,C_i)$ (just observe that the signed safe walk corresponding to the cylinder $\TG_i \times [0,1]$ {\em winds up} $R_i$ times around $\TG_i$). 
So, since $\phi$ and $\phi_{(\Sigma,\G,\iota)}$ coincide on a spine with a periodic homeomorphism and have the same fractional Dehn twist coefficients %relative to the boundary
 we can conclude by 
\Cref{cor:iso_rot} that they are isotopic relative to the boundary. 

The restrictions of $\widetilde{\phi}$ and $\widetilde{\phi}_{(\Sigma,\G,\iota)}$ to each invariant cylinder $\widetilde{\Si}_i$ are conjugate by a homeomorphism that leaves its boundary  fixed pointwise. Then, these conjugation homeomorphisms glue together to a conjugation homeomorphism for $\phi$ and $\phi_{(\Sigma,\G,\iota)}$ that leaves $\G$ and $\partial\Si$ fixed pointwise. Then, $\phi$ and $\phi_{(\Sigma,\G,\iota)}$ are also conjugate as required in the statement.

Now we finish the proof finding such an invariant spine $\G$. 

First we prove the case $g:=genus(\Si^{\phi'})\geq 1$. To choose a spine in $\Si^{\phi'}$, we use a planar representation of $\Si^{\phi'}$ as a convex $4g$-gon in $\R^2$
with $r$ disjoint open disks removed from its convex hull. The sides of the $4g$-gon are labelled clockwise like 
$a_1b_1a_1^{-1}b_1^{-1}a_2b_2a_2^{-1}b_2^{-1} \ldots a_{g}b_{g}a_{g}^{-1}b_{g}^{-1}$, where edges labelled with the same letter (but different exponent)  are identified by an
orientation reversing homeomorphism. The number $r$ is the number of boundary components. We number the boundary components $C_i\cont\partial\Si$, $1\leq i\leq r$.
We denote by $d$ the arc $a_2b_2a_2^{-1}b_2^{-1} \ldots a_{g}b_{g}a_{g}^{-1}b_{g}^{-1}$.  
We consider $l_1$,...,$l_{r-1}$ arcs as in \Cref{fig:gn}. We denote by $c_1$,...,$c_{r}$ the edges in which $a_1^{-1}$ (and $a_1$) is subdivided, 
numbered according to the component $p(C_i)$ they enclose.
We consider the spine $\G^{{\phi'}}$ of $\Si^{{\phi'}}$ given by the union of $d$, $a_1b_1a_1^{-1}b_1^{-1}$ and the $l_i$'s. 
We construct $\G^{{\phi'}}$ so that it passes by all the branching points of $p$. 

We denote by $D$, $B_1$ and $C_i$ the lengths of $d$, $b_1$ and $c_i$ respectively. We will assume  all the $l_i$ and $b_1$ of the same length $L$.
Then the system (\ref{eq:3}) for this case  can be expressed as follows: 
$$2L+2C_1=\frac{\pi}{n\cdot R_1}$$
\begin{equation}
\label{eq:gn}
2L+2C_{i}=\frac{\pi}{n\cdot R_{i}}\ \ for \ i=2,...,r-1
\end{equation}
$$2L+2C_r+D=\frac{\pi}{n\cdot R_r},$$
which has obviously positive solutions $C_i$, $D$ after choosing for example $L=\min\{\frac{\pi}{4\cdot n\cdot R_{i}}\}$.
We assign $length(a_i)=length (b_i)=D/4(g-1)$ for $i>1$ to get a metric on $\G^{\phi}$. We consider the pullback-metric in $\G$. This finishes the case $genus(\Si^{\phi'})\geq 1.$

\begin{figure}[!ht] 
\centering
\includegraphics[scale=0.6]{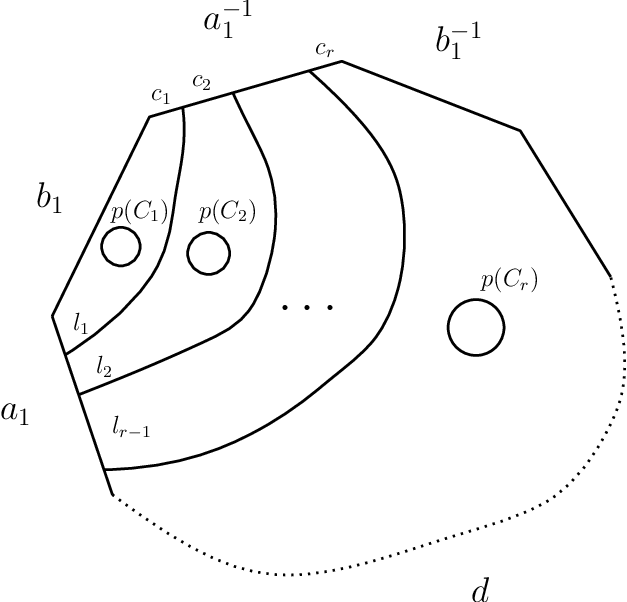}%{gn}
\caption{Planar representation of the surface$\Si^\phi$ of genus$\geq 1$ and $r$ boundary components $p(C_1)$, ...,$p(C_r)$. Drawing of $l_1,...,l_r$ and $c_1,..,c_r$.}
\label{fig:gn}
\end{figure}
\begin{figure}[!ht] 
\centering
\includegraphics[scale=0.6]{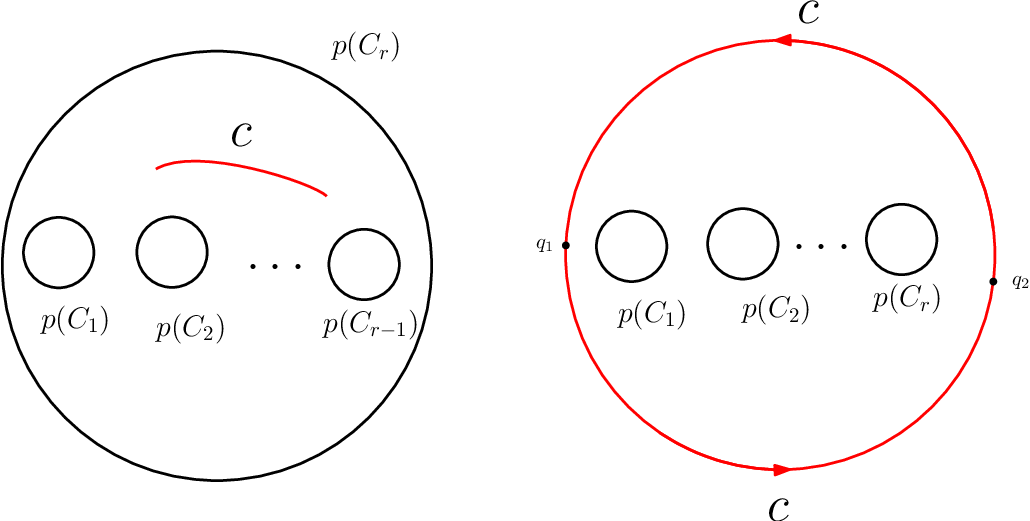}

\includegraphics[scale=0.6]{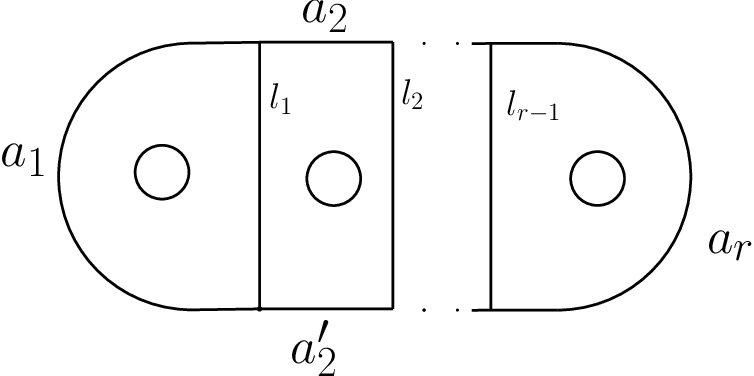}%{g0}
\caption{In the first two pictures we have the disk with $r-1$ smaller disks removed. In the third one we forget the identification along the exterior and we draw a spine.}
\label{fig:g0}
\end{figure}

For the case $genus(\Si^{\phi'})=0$ we proceed a bit differently to choose the spine $\G^{\phi'}$. 
The surface $\Si^{\phi'}$ is a disk with  $r-1$ smaller disjoint disks removed. We cut the surface along an embedded segment that we call $c$ as we can see in the first 
image of \Cref{fig:g0}. Cutting along $c$ we get another planar representation of $\Si^{\phi'}$ as in the second image. The exterior boundary corresponds to $cc^{-1}$; 
we call the exterior boundary $P$ and denote by $q_1$ and $q_2$ the points in $P$ that come from the two extremes of $c$.
We look at the graph of the third picture in \Cref{fig:g0}. We have drawn $r-1$ vertical 
segments $l_1$,...,$l_{r-1}$ so that $P$ union with them contains all branch points and is a regular retract of the disk enclosed by $P$ minus the $r$ disks. 

The rest of the proof follows by cases on the number of boundary components $r$ and the number of branch points.

If $r=1$ and there are no branch points or $1$ branch point, then $\Si$ is a disk (this follows from the Hurwitz formula) which is not covered by the  statement of the theorem. If there are at least $2$ branch points, we can get that two branch points lie in $q_1$ and $q_2$ so that $\G$ has no univalent vertices. In this case we set $length(c)=\frac{\pi}{2 n R_1}$.

Suppose now $r=2$.  If there are no branch points, then $\Si$ is a cylinder which is not included in the statement of this theorem.

If there is at least $1$ branch point we consider two cases, namely $R_1 = R_2$ and $R_1 < R_2$.

In the  case $R_1=R_2$, we choose the graph depicted on the right hand side of  \Cref{fig:genus0r2}. That is, $q_1$ and $q_2$ are exactly $a_1 \cap a_2$. In this case we do not care about the location of the branch point as long as it is contained in the graph. In this case we set $lenght(c)=length(l_1)= \frac{\pi}{2nR_1}$

In the  case $R_1<R_2$, we choose the graph depicted on the left hand side of  \Cref{fig:genus0r2} and we choose the branch point to lie on $q_1$, this way, since $q_1$ is the only vertex of valency $1$, we get that the preimage of this graph by $p$ does not have univalent vertices. We set the lengths, $length(l_1)=\frac{\pi}{n R_1}$ and $length(c)=(\frac{\pi}{nR_2} - \frac{\pi}{n R_1})/2$.

\begin{figure}[!ht]
\centering
\includegraphics[scale=0.6]{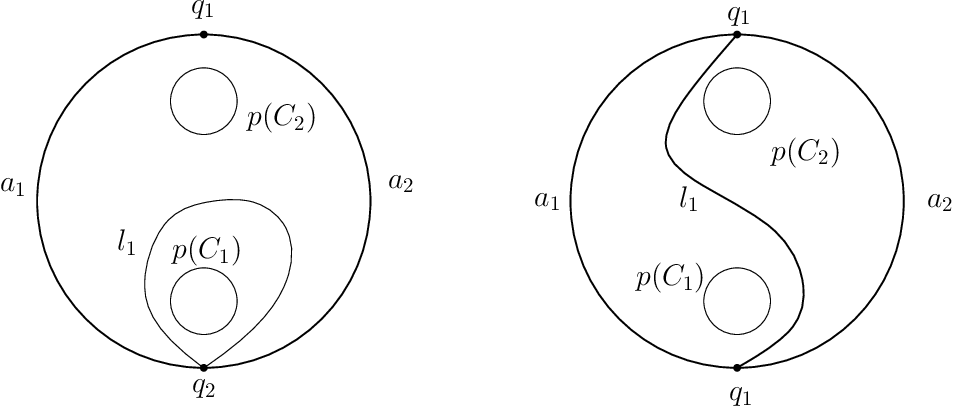}%{genus0r2}
\caption{On the left, the case $R_1<R_2$. On the right the case when $R_1=R_2$.}
\label{fig:genus0r2}
\end{figure}

Suppose now $r>2$.

We are going to assign lengths to every edge in \Cref{fig:g0} and decide how to divide and glue $P$ in order to recover $\Si^{\phi'}$. This means that we are 
going to decide the position of $q_1$ and $q_2$ in $P$, relative to the position of the ends of the $l_i$'s, in order to get a suitable metric spine of the quotient surface $\Si^{\phi'}$.  

To every vertical interior segment $l_j$ we assign the same length 
$$L<min\{\frac{\pi}{2(n\cdot R_i)}\}.$$
We look at the segments 
$a_1$, $a_2$, $a_2'$,...,$a_{r-1}$, $a_{r-1}'$, $a_r$ in which $P$ is divided by the vertical segments (see \Cref{fig:g0}) and give lengths 
$A_1$, $A_2$, $A_2'$,...,$A_{r-1}$, $A_{r-1}'$, $A_r$. The following system corresponds to (\ref{eq:3}) for this case:
$$A_1+L=\frac{\pi}{n\cdot R_1}$$
\begin{equation}
\label{eq:g0}
A_i+A_{i}'+2L=\frac{\pi}{n\cdot R_{i}}\ \ for\ \  i=2,...,r-1
\end{equation}
$$A_r+L=\frac{\pi}{n\cdot R_r}.$$
It has obviously positive solutions $A_i$. We choose $A_i=A_i'$ for $i=2,...,r-1$. 

In order this distances can be pullback to the original graph $\Si^{\phi'}$, there is one equation left: we have to impose equal length to the two paths $c$ and $c^{-1}$, or in other words to place $q_1$ and $q_2$ dividing $P$ in two segments of equal length. 

If $\phi$ has at least two branching points, we choose $q_1$ and $q_2$ to be any two of the branching points. Then, we can choose the metric and the vertical segments so that 
$q_1$ and $q_2$ are the middle points of $a_1$ and $a_r$.
If we identify the two paths $c$ and $c^{-1}$ joining $q_1$ and $q_2$ then we recover $\Si^{\phi'}$, and we get a metric graph on it.
Then, the preimage by $p$ of the resulting metric graph gives a metric graph. We claim that this graph has no univalent vertices. Indeed, a univalent vertex of this graph has to be the preimage of univalent vertices of the graph below, which are only $q_1$ and $q_2$, which are branching points. By 
\Cref{galois} all their preimages are ramification points and then they are not univalent vertices. 
The metric induces a \tat structure in the graph by construction. Now, we finish the proof as in the case $genus(\Si^{\phi'})\geq 1$.

If $\phi$ has no ramification points or only one, in the previous construction we could obtain univalent vertices at the preimages of $q_1$ and $q_2$.
So we need to do some changes in the assignations of lengths and in the positions of the vertical segments relative to $q_1$ and $q_2$ in order that the extremes 
$q_1$ and $q_2$ of $c$ coincide with vertices of the graph in $\Si^{\phi'}$. 

Assume that $A_1\geq A_2 \geq \cdots \geq A_r$. 

We choose $q_1:=a_1\cap a_2$. Let $q_2$ be the antipodal point (so $q_1$ and $q_2$ divides $P$ in two paths of equal length). 
If $q_2$ is a vertex we have finished. If it is not, then it is on a segment $a'_i$, for some $i=2,...,r_1$. 
We redefine $A'_i:=d(a'_i\cap a'_{i-1},q_2)$ and $A_i:=A_i+d(q_2,a'_i\cap a'_{i+1})$.  Now the antipodal point of $q_1$ is $a_i\cap a_{i+1}$. 
We redefine $q_2:=a_i\cap a_{i+1}$ and identify orientation reversing the two paths joining $q_1$ and $q_2$ to recover $\Si^\phi$.
 
Now the pullback of the resulting metric graph has no univalent vertices and gives a \tat structure since the corresponding system of equations is satisfied.
\end{proof}

\begin{remark}
Observe that in the case that $genus(\Si^{\hat{\phi}})\neq 0$ or the quotient map by $\hat{\phi}$ has at least two branching points, 
we have found a spine $\G$ of $\Sigma$ such that for any homeomorphism $\psi$ which fixes pointwise the boundary and is boundary-free isotopic to $\hat{\phi}$, 
there is a signed \tat structure on $\G$ (that is, a metric and a sign function $\iota$) such that $\psi$ is isotopic relative to the boundary to the corresponding 
signed \tat homeomorphism. In other words, there is a {\em universal} spine which may be endowed of signed \tat structures representing all boundary fixed isotopy classes 
of homeomorphisms which are boundary-free isotopic to $\phi$. 
\end{remark}

Now we state the relative case in a simple way, skipping the obvious strengthenings similar to the previous theorem:

\begin{theorem}\label{thm:rel_signed}
Let $\Si$ be an oriented surface with non-empty boundary. Let $\partial^1\Si$ be a non empty union of boundary components. Let $A$ be the union of the boundary 
components not contained in $\partial^1\Si$. Let ${\phi}$ be an orientation preserving homeomorphism fixing pointwise $\partial^1\Si$ and boundary-free isotopic to a 
periodic one $\hat{\phi}$. Then, there exists a signed \tat spine $(\G,A,\iota)\hookrightarrow\Sigma$ such that $\phi_{(\Sigma,\G,A,\iota)}$ is isotopic relative to $\partial^1\Si$ to $\phi$. Moreover, if $\phi$ is periodic outside a collar of $\partial^1\Si$, we have also that  $[\phi]_{\partial,\phi|_{\partial}}=[\phi_{(\Sigma,\G,A,\iota)}]_{\partial,\phi|_{\partial}}$.
\end{theorem}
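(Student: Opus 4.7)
The argument closely parallels that of \Cref{thm:000}; we indicate only the modifications needed to accommodate the relative boundary $A$.

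First, by \Cref{lem:collar} we may replace $\phi$, within its class $[\phi]_{\partial^1}$, by a homeomorphism $\psi$ whose restriction $\psi'$ to $\overline{\Si\setminus U}$ is periodic of order $n$ for some collar $U$ of $\partial^1\Si$. Since $\hat\phi$ preserves $A$ setwise, so does $\psi$. Let $p:\overline{\Si\setminus U}\to\Si^{\psi'}$ be the quotient map. The boundary of $\Si^{\psi'}$ splits into two types: collar boundaries corresponding to the components of $\partial^1\Si$, and boundary components coming from the $\psi'$-orbits of the components of $A$.

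Next, choose a spine $\G^{\psi'}\subset\Si^{\psi'}$ passing through all branching points of $p$ and containing the image $A^{\psi'}$ of $A$ as a distinguished subgraph; this is possible by the same planar-representation arguments as in \Cref{thm:000}, treating the $A$-boundary components as additional holes. Its pullback $\G:=p^{-1}(\G^{\psi'})$ is a $\psi'$-invariant graph containing $A$, and $(\G,A)$ is a relative ribbon graph in $(\Si,A)$. Assign edge lengths on $\G^{\psi'}$ so as to impose, for each non-$A$ boundary component $C_i$ with $R_i:=|rot_{\partial^1}(\phi,C_i)|\neq 0$, the condition $length(p(\widetilde{\G}_i))=\pi/(nR_i)$; the arguments of \Cref{thm:000} yield positive rational solutions, with the edges of $A^{\psi'}$ serving as free slack variables. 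Define $\iota(i):=sign(rot_{\partial^1}(\phi,C_i))$.

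We claim that with this data $\phi_{(\Si,\G,A,\iota)}$ is isotopic to $\psi$ relative to $\partial^1\Si$. Indeed, arguing as in \Cref{thm:000}, the matching of rotation numbers on each $\widetilde{\G}_i$ together with conjugacy of periodic rotations on $\MS^1$ with equal rotation number and equal orbit structure gives $\sigma_{(\G,\iota)}=\psi|_\G$; in particular, since $\psi(A)=A$, the signed \tat condition on $A$ is automatically satisfied. By \Cref{re:iso_graph} the two homeomorphisms are boundary-free isotopic, and by \Cref{cor:iso_rot} the isotopy can be chosen relative to $\partial^1\Si$ because the fractional Dehn twist coefficients at each component of $\partial^1\Si$ agree. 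For the ``moreover'' statement, when $\phi$ itself is periodic outside a collar of $\partial^1\Si$ we take $\psi=\phi$; then both maps restrict to $\sigma_{(\G,\iota)}|_A$ on $A$, and the cylinder-by-cylinder conjugacy argument of \Cref{thm:000} yields a conjugation fixing $\G\supseteq A$ and $\partial^1\Si$ pointwise, which gives the claimed equality of classes in $MCG(\Si,\partial,\phi|_\partial)$.

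The main obstacle is the existence of the spine $\G^{\psi'}$ satisfying simultaneously that it passes through all branching points, contains $A^{\psi'}$, and admits a metric solving the length system. The extra $A^{\psi'}$-edges merely add unconstrained slack to the system, so the case-by-case construction of \Cref{thm:000} (organized by genus and by number of boundary components and branching points of $\Si^{\psi'}$) carries over after straightforward adjustments; one need only verify in each low-complexity case that including $A^{\psi'}$ in the spine preserves its being a regular retract of $\Si^{\psi'}$.
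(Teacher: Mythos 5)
Your route is genuinely different from the paper's. The paper does not redo the quotient--spine construction in the relative setting at all: it caps each component of $A$ with a disk via the Alexander trick (\Cref{def:alex}), obtaining a larger surface whose boundary is exactly $\partial^1\Si$ and is fixed pointwise, applies \Cref{thm:000} there while arranging that the centres of the capping disks are vertices of the signed \tat spine (they are ramification points of the periodic extension whenever $\hat{\phi}$ rotates the corresponding component, and can otherwise be added as vertices), and then performs an $\epsilon$-blow-up (\Cref{def:blow}) at those vertices to convert them into the relative components $A$. This reduction reuses \Cref{thm:000} as a black box and requires no new combinatorial or metric analysis; your proposal instead reruns the proof of \Cref{thm:000} with $A$ carried along throughout.

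As written, your argument has a genuine gap exactly at what you yourself call the main obstacle: the existence of a spine $\G^{\psi'}\subset\Si^{\psi'}$ that contains $A^{\psi'}$ and all branching points, admits positive edge lengths solving $length(\widetilde{p(\G_i)})=\pi/(n R_i)$ for every cylinder coming from $\partial^1\Si$, and whose pullback $p^{-1}(\G^{\psi'})$ has no univalent vertices. This is precisely the entire content and difficulty of the proof of \Cref{thm:000}: its case analysis by genus, number of boundary components and number of branch points, including the placement of $q_1,q_2$ and the length adjustments needed to kill univalent vertices; \Cref{ex:counterexample0} shows that an arbitrary invariant spine need not carry a compatible metric, so the choice of spine is not innocuous. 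Declaring that the construction ``carries over after straightforward adjustments'' is an assertion, not a proof: forcing the $A^{\psi'}$ circles into the spine changes which arcs are available, which edges enter which equations (each $A$-edge lies on exactly one cylinder boundary walk, interior edges on two), and where univalent vertices can appear, and these points would have to be verified case by case just as in the absolute proof. The rest of your argument does adapt correctly --- the matching of rotation numbers and orbits on each $\widetilde{\G}_i$ gives $\sigma_{(\G,\iota)}=\psi|_{\G}$, from which the relative condition on $A$ and, via \Cref{cor:iso_rot}, the isotopy relative to $\partial^1\Si$ follow, and your cylinder-by-cylinder treatment of the ``moreover'' clause is sound --- so either complete the relative case analysis in detail or, more economically, use the Alexander-trick-plus-blow-up reduction.
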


\begin{proof}
Apply Alexander's trick (see \Cref{def:alex}) to the boundary components in $A$ in order to obtain a larger surface and a homeomorphism fixing pointwise the boundary. Construct a signed \tat graph
inducing this homeomorphism like in the proof of \Cref{thm:000}. We can always get that this signed \tat graph contains as vertices the centers of the disks added by Alexanders's trick. Now apply an $\epsilon$-blow up (recall \Cref{def:blow}) to these vertices to get the desired signed relative \tat graph. 
\end{proof}
\begin{remark}\label{re:v3} Note that this produces a relative \tat graph where all the vertices in the boundary have valency at most 3.
\end{remark}

\subsection{Periodic \tat twists and periodic representatives in $MCG(\Si)$.}\label{sub:comp_per}
In this section we introduce a periodic twist induced by a \tat graph. We prove  Theorem 5.25 as a corollary of the previous section, which shows  that periodic \tat twists give rise to all truly periodic homeomorphisms (leaving at least one boundary component invariant) up to isotopy and 
conjugacy.

%\section{\Tat graphs and the group $MCG(\Si)$} \label{sub:comp_per}

%In this section, we prove, as a corollary of the previous section, that  periodic homeomorphisms induced by \tat graphs %
%give rise to all truly periodic homeomorphisms (leaving at least one boundary component invariant) up to isotopy and 
%conjugacy.

%First observe that given a pure \tat graph, we can see it as a signed \tat graph with $\iota$ constant $+$. Then, modifying 

\begin{definition}[Periodic \tat twist]\label{def:phi_Gper}
Replacing the map (\ref{eq:psi}) in \Cref{def:phi_Gtau} by 
$$\psi_{i}:\widetilde{\G}_i\times I\longrightarrow\widetilde{\G}_i\times I$$
$$(p,s)\mapsto (\widetilde{\gamma}^{+}_p(\pi),s)$$
we get a truly periodic homeomorphism that we denote by $\phi^{per}_{\Sigma,\G,A,\iota}$.% for each chosen product structure of the cylinder decomposition of $\Si$. 
We call it the induced \textit{periodic \tat twist}.
\end{definition}

\begin{remark}\label{rem:conju}For two different product structures of the cylinders, the induced periodic \tat  twists are conjugate
by a homeomorphism that fixes $\G$.

For two different embeddings in $\Si$, the periodic \tat twists are conjugate by the same homeomorphism of $\Sigma$ that relates the two embeddings. 
\end{remark}

Note that obviously $\phi^{per}_{\Sigma,\G,A,\iota}$ is boundary-free isotopic to $\phi_{(\Sigma,\G,A,\iota)}$ and coincides along $\G$.  

Given a periodic homeomorphism $\phi$, we can clearly choose a representative $\phi'$ of $[\phi]$ that leaves $\partial^1 \Si$ fixed pointwise (by isotoping $\phi$ near  $\partial^1 \Si$ until it is the identity on it). Then, we can find a signed \tat graph $(\G, A,\iota) $ embedded in $\Si$ to represent it using \Cref{thm:000} and \Cref{thm:rel_signed}.  Then, we can consider the periodic homeomorphism $\phi^{per}_{\Sigma,\G,A,\iota}$. Note that we can always get that $\phi'$ has all its fractional Dehn twist coefficients  positive, which means $\iota\equiv +$. Then, we get 
the following theorem by using Remark \ref{re:cyl}:

\begin{theorem}\label{thm:rel}
Let $\Si$ be a connected surface with non-empty boundary which is not a disk or a cylinder. Let $\phi$ be an orientation preserving periodic homeomorphism of $\Si$ that leaves 
(at least) one boundary component invariant. Let $A$ be the set containing all boundary components that are not invariant
by $\phi$. Then there exists a relative \tat graph 
$(\G, A)$ embedded in $(\Si,A)$, which is invariant by $\phi$, such that
\begin{enumerate}[(i)]
\item We have the equality of boundary-free isotopy classes 
$[\phi_{(\Sigma,\G,A,\iota)}^{per}]_{A, \phi|_A}=[\phi]_{A, \phi|_A}$ (see  \Cref{not:iso}). 
\item  the homeomorphism $\phi$  is conjugate to $\phi_{(\Sigma,\G,A,\iota)}^{per}$ by a homeomorphism that fixes $\G$.% for any choice of product  structures of the cylinders $\widetilde{\Si}_i$ by a homeomorphism that fixes $\G$.
\end{enumerate}
\end{theorem}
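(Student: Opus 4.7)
The plan is to reduce to the relative signed case via Theorem~\ref{thm:rel_signed}, and then convert the signed \tat twist to its truly periodic version using Definition~\ref{def:phi_Gper} together with Remark~\ref{re:cyl}. Set $\partial^{1}\Sigma:=\partial\Sigma\setminus A$, which is non-empty by hypothesis. First I would replace $\phi$ by a representative $\phi'\in[\phi]$ that fixes $\partial^{1}\Sigma$ pointwise, obtained by untwisting the rotation $\phi|_{C_{i}}$ inside a small collar of each invariant $C_{i}\subset\partial^{1}\Sigma$; this leaves $\phi$ unchanged on $A$ and on a spine sitting in the complement of the collars. Next I would compose $\phi'$ with right-handed Dehn twists along curves parallel to the components of $\partial^{1}\Sigma$. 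Such twists are free isotopic to the identity, so they do not alter the class $[\phi]$, while each added twist at $C_{i}$ raises $rot_{\partial^{1}}(\phi',C_{i})$ by one. Performing enough of them produces a new $\phi'\in[\phi]$ which fixes $\partial^{1}\Sigma$ pointwise and satisfies $rot_{\partial^{1}}(\phi',C_{i})>0$ for every $i$.

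Now apply Theorem~\ref{thm:rel_signed} to this $\phi'$. Because the periodic representative supplied by Lemma~\ref{lem:collar} can be taken to be $\phi$ itself (since $\phi$ is already periodic), the resulting signed relative \tat spine $(\G,A,\iota)$ comes out invariant under $\phi$. By construction $\iota(i)=\operatorname{sign}(rot_{\partial^{1}}(\phi',C_{i}))=+$ for every $i$, so $(\G,A)$ is a pure relative \tat graph. Let $\phi^{per}_{(\Sigma,\G,A,\iota)}$ be the induced periodic \tat twist of Definition~\ref{def:phi_Gper}. For (i), I would chain the three isotopies
$$\phi^{per}_{(\Sigma,\G,A,\iota)}\ \rightsquigarrow\ \phi_{(\Sigma,\G,A,\iota)}\ \rightsquigarrow\ \phi'\ \rightsquigarrow\ \phi,$$
where the first is supported in the cylinders $\widetilde{\Sigma}_{i}$ above $\partial^{1}\Sigma$ (remark following Definition~\ref{def:phi_Gper}), the second is rel $\partial^{1}\Sigma$ provided by Theorem~\ref{thm:rel_signed}, and the third is supported in the boundary collars of $\partial^{1}\Sigma$. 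All intermediate maps restrict to $\sigma_{\G}$ on $\G$; moreover the supports of the first and third isotopies are disjoint from $A$, so each intermediate map restricts to $\sigma_{\G}|_{A}=\phi|_{A}$ on $A$, which is exactly the equality $[\phi^{per}_{(\Sigma,\G,A,\iota)}]_{A,\phi|_{A}}=[\phi]_{A,\phi|_{A}}$.

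For (ii), both $\phi$ and $\phi^{per}_{(\Sigma,\G,A,\iota)}$ are orientation-preserving truly periodic homeomorphisms leaving $\G$ invariant and coinciding with $\sigma_{\G}$ on $\G$. Restricted to each cylinder $\widetilde{\Sigma}_{i}$ of $\Sigma_{\G}$ adjacent to an invariant boundary component, the lifts $\widetilde{\phi}$ and $\widetilde{\phi}^{per}$ are periodic orientation-preserving homeomorphisms that agree on $\widetilde{\G}_{i}$ and, by Corollary~\ref{cor:rot_red}, have the same rotation number on $C_{i}$ (the fractional part of $rot_{\partial^{1}}(\phi',C_{i})$). Remark~\ref{re:cyl} says each such cylinder map is conjugate to a rotation, and since the two maps already coincide on $\widetilde{\G}_{i}$, the conjugation within each $\widetilde{\Sigma}_{i}$ can be chosen to be the identity on $\widetilde{\G}_{i}$. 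Gluing these per-cylinder conjugations along $\G$ produces a homeomorphism of $\Sigma$ that fixes $\G$ pointwise and conjugates $\phi$ to $\phi^{per}_{(\Sigma,\G,A,\iota)}$. The main technical obstacle is the bookkeeping in the first paragraph, which must simultaneously guarantee that $\G$ is invariant under the original $\phi$ and that $\iota\equiv+$; both requirements are compatible because the periodic representative in the proof of Theorem~\ref{thm:000} may be taken to be $\phi$ itself, while the signs of $\iota$ are controlled by pre-composing $\phi'$ with arbitrarily many boundary-parallel Dehn twists.
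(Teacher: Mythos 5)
Your proposal is correct and follows essentially the same route as the paper: pass to a representative of $[\phi]$ that fixes $\partial^1\Sigma=\partial\Sigma\setminus A$ pointwise and has positive fractional Dehn twist coefficients (so $\iota\equiv+$), invoke \Cref{thm:000}/\Cref{thm:rel_signed} to get a $\phi$-invariant relative \tat spine, and then compare $\phi$ with $\phi^{per}_{(\Sigma,\G,A,\iota)}$ cylinder by cylinder using \Cref{re:cyl}. Your write-up just fleshes out details the paper leaves implicit (e.g.\ choosing the per-cylinder conjugations to be the identity on $\widetilde{\G}_i$ so they glue, and noting that for the class rel the action on $A$ one uses the ``moreover'' clause of \Cref{thm:rel_signed} rather than only the rel-$\partial^1\Sigma$ statement).
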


\begin{remark}
Observe that in order to represent all truly periodic homeomophisms we do not need the extension to signed \tat graphs in \Cref{def:tat_tau}; The first author 
original definition, given in \Cref{def:tat} is enough.
\end{remark}

The following corollary recovers a known result giving an elementary proof of it. See, for example, \cite[Theorem 7.14]{Farb} and the introduction to section 7.4 in that same reference for a different proof.

\begin{corollary}\label{cor:finite_conju}
	Let $\Si$ be the surface of genus $g$ and $r$ boundary components. There are finitely many conjugacy classes of finite-order mapping classes in $MCG(\Si)$.
\end{corollary}

\begin{proof}
	Observe that there are only a finite amount of spines without vertices of valency $1$. By \Cref{thm:rel}, these are enough to model all periodic mapping classes. By \Cref{rem:conju}, two distinct embeddings of the same graph produce conjugate periodic homeomorphisms and the result follows. 
\end{proof}
\section{Pseudo-periodic homeomorphisms.}

\label{sec:pseudo}
\subsection*{Definition and conventions}
We recall some definitions and fix some conventions on pseudo-periodic homeomorphisms of surfaces with boundary.

\begin{definition}\label{def:pseudo}
A homeomorphism $\phi:\Si \rightarrow \Si$ is pseudo-periodic if it is isotopic to a homeomorphism satisfying that there exists a finite collection of disjoint simple closed curves $\mathcal{C}$ such that 
\begin{enumerate}
\item $\phi(\mathcal{C})= \mathcal{C}$
\item $\phi|_{\Si \setminus \mathcal{C}}$ is boundary-free isotopic to a periodic homeomorphism. 
\end{enumerate}
This system of cut curves $\mathcal{C}$ is called  a system of cut curves subordinated to $\phi$.
\end{definition}

The following theorem is due to Nielsen (see \cite[Section 15]{Niel1}): 

\begin{theorem}\label{theo:min} Given a pseudo-periodic homeomorphism $\phi$, there exists a minimal system of cut curves $\calC$. In particular, none of the connected components of $\Si \setminus \mathcal{C}$ is neither a disk nor an annulus.  A minimal system of cut curves  is unique up to isotopy.  
\end{theorem}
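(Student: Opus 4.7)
The plan is to establish the theorem in two stages: first existence of a minimal system by a reduction procedure, then uniqueness via a minimal-position argument combined with the rigidity of periodic pieces.

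For existence, I would start with any subordinated system $\mathcal{C}$, which exists by \Cref{def:pseudo}, and show that if a component $P$ of $\Si\setminus\mathcal{C}$ is either a disk or an annulus, then one can remove at least one curve from $\mathcal{C}$ and still have a subordinated system, so that $\Si\setminus\mathcal{C}$ has strictly fewer components or fewer total boundary curves. The disk case is easy: the boundary curve of the disk can be removed from $\mathcal{C}$ because, after isotoping $\phi$ so that the periodic representative fixes the boundary of $P$, an Alexander-trick extension produces a periodic representative on the enlarged piece (the union of $P$ with its $\phi$-orbit of neighbors through that curve). The annulus case requires more care: if $P$ is an annulus bounded by $c_1,c_2\in\mathcal{C}$ with neighbors $Q_1,Q_2$ on which $\phi$ (or some power, to match the annular orbit) is periodic, then by \Cref{re:cyl} the restriction to $P$ is isotopic to a composition of a rotation and some number of Dehn twists, and one can absorb the rotation into a periodic representative on $Q_1\cup P\cup Q_2$ while the Dehn twist part becomes an interior feature that does not require cutting. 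Since $\Si$ has finitely many simple closed curves up to isotopy appearing in $\mathcal{C}$, the reduction terminates; no disk or annular complementary component remains.

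For uniqueness, let $\mathcal{C}$ and $\mathcal{C}'$ be two minimal subordinated systems. I would isotope them into minimal position (no bigons, minimal geometric intersection in each isotopy class), and prove that $\mathcal{C}=\mathcal{C}'$ as sets. The heart of the argument is to show that each essential simple closed curve $c\in\mathcal{C}$ is dynamically characterised by $\phi$: namely, the $\phi$-orbit of $c$ consists of pairwise disjoint (up to isotopy) curves, and $c$ is maximal with respect to having this property and not bounding a disk or cobounding an annulus with a parallel curve already in the orbit. Given this characterisation, each $c\in\mathcal{C}$ must, after suitable isotopy, be disjoint from $\mathcal{C}'$ (since a curve in $\mathcal{C}'$ would be forced by the same dynamical argument to not intersect $c$ essentially in minimal position, using the bigon criterion and the fact that an intersection of two periodic-invariant curves would force extra intersections among iterates). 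Hence $\mathcal{C}\cup\mathcal{C}'$ is itself a disjoint system, and by minimality of both, the extra curves in either one would produce a disk or annular complementary component in the other, a contradiction.

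The main obstacle I expect is the uniqueness step, specifically the dynamical characterisation of curves in a minimal system. One cannot simply argue by counting, because two minimal systems could a priori cut $\Si$ into different collections of pieces. The clean way is to invoke the Nielsen--Thurston reduction theory adapted to surfaces with boundary: the canonical reduction system of a reducible mapping class is unique, and a minimal subordinated system in the sense of \Cref{def:pseudo} coincides with this canonical system (because the complementary pieces are forced to be periodic-up-to-isotopy, and no proper subset has this property after the disk/annulus reductions). Once this identification is made, uniqueness is immediate; the technical work lies in verifying that the reduction process of step one matches the canonical one, which reduces to checking the disk/annulus cases described above.
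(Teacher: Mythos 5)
The paper does not actually prove this statement: it is quoted from Nielsen (\cite{Niel1}, Section 15; see also \cite{MM}), so your proposal must stand on its own against the classical argument. Your existence step contains a genuine error in the annulus case. If a complementary component $P$ of $\Si\setminus\mathcal{C}$ is an annulus bounded by $c_1,c_2\in\mathcal{C}$, you propose to delete both curves (both orbits) and absorb $P$ into $Q_1\cup P\cup Q_2$, claiming that the Dehn twist part of $\phi|_P$ ``becomes an interior feature that does not require cutting''. This is exactly backwards: when the twisting along the core of $P$ is nontrivial (nonzero screw number), the relevant power of $\phi$ restricted to the merged piece is a periodic map composed with a nontrivial power of a Dehn twist along an essential curve that is \emph{not} boundary-parallel in the merged piece, and such a map is not boundary-free isotopic to a periodic one -- an interior twist is precisely what forces a cut. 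The correct reduction deletes only one of the two parallel orbits of curves (equivalently replaces the pair by the core curve), so the annular component disappears while the reducing curve survives; only twisting that becomes parallel to a boundary component of the enlarged piece can be isotoped away freely, as in the disk case, which is fine.

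The uniqueness step is not yet a proof. The ``dynamical characterisation'' of the curves of a minimal system is asserted, not established, and the claim that curves of two minimal systems $\mathcal{C}$, $\mathcal{C}'$ in minimal position cannot intersect essentially is exactly the hard point of Nielsen's and Matsumoto--Montesinos' argument: one must use that a power of $\phi$ fixes the isotopy class of $c'\in\mathcal{C}'$, restrict to the pieces of the $\mathcal{C}$-decomposition met by $c'$, realize the periodic restrictions honestly via \Cref{theo:niel}, and show that an invariant isotopy class of arcs or curves under a finite-order map admits an invariant representative, from which disjointness follows. Your alternative route through the canonical reduction system is legitimate in spirit, but the identification of a minimal subordinated system with the canonical reduction system is itself the content to be proved: one needs that every curve of a minimal system is an essential reduction class, in particular that after the disk/annulus reductions any orbit of curves with trivial twisting (including amphidrome orbits, where $\phi$ exchanges the two sides of the curve) can be removed using Nielsen realization on the amalgamated piece -- which is precisely what your parenthetical remark assumes. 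Until one of these routes is carried out in detail, both the annulus reduction and the uniqueness assertion remain unproved.
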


%The following theorem is a particularization on pseudo-periodic homeomorphisms of the more general Corollary 13.3 in \cite{Farb} that describes a {\em canonical form} for every homeomorphism of a surface. 

%%CUIDAR LABELS: {rem:mod_canonical_form}\label{theo:canonical}

\begin{remark}[Quasi-Canonical Form]\label{re:stan} Given a system of cut curves $\calC=\{\calC_i\}$ subordinated to $\phi$, it is clear that there exists an isotopic homeomorphism $\phi'$ that admits annular neighbourhoods $\calA=\{\calA_i\}$ of the curves in $\mathcal{C}$ with $\calC_i \subset \calA_i$ such that
\begin{enumerate}
\item $\phi'(\calA)=\calA$, 
\item the map $\phi'|_{\overline{\Si\setminus\calA}}$ is periodic.
\end{enumerate}

Moreover, in the case $\phi$ fixes pointwise some components $\partial^1\Si$ of the boundary $\partial \Si$, we can always find an isotopic homeomorphism $\phi''$ relative to $\partial^1 \Si$ that coincides with a homeomorphism satisfying (1) and (2) outside a collar neighborhood $U$ of $\partial^1 \Si$. We may assume that there exists an isotopy connecting $\phi$ and $\phi''$ relative to $\partial^1\Si$.

We say $\phi'$ and $\phi''$ are \emph{quasi-canonical forms} for $\phi$ with respect to the set $\calC$ of cut curves. 
\end{remark}

\begin{definition}[Canonical Form]\label{def:can} 
A quasi-canonical form for a quasi-periodic homeomorphism $\phi$ with respect to a minimal system of cut curves is called a \emph{canonical form} for $\phi$. 
  
\end{definition}

\begin{remark}\label{re:un_C} Note that the uniqueness up to isotopy of a minimal system of cut curves $\mathcal{C}$ (see  \Cref{theo:min}) implies that $\phi|_{\mathcal{A}}$ is unique up to conjugacy where $\mathcal{A}$ is the collar neighbourhood of $\mathcal{C}$ in the definition of canonical form.  
\end{remark}

\begin{notation}\label{not:D_an}
Let $m,c \in \R$. We denote by $\calD_{m,c}$ the homeomorphism of $\MS^1 \times I$ induced by $(x,t) \mapsto (x +mt +c,t)$ (we are taking $\MS^1 = \R/ \Z$). Observe that 
\begin{equation}\label{eq:Dehn_sum}\calD_{m,c}\circ \calD_{m',c'} = \calD_{m+m',c+c'},\end{equation}
\begin{equation}\calD_{m,c}^{-1}=\calD_{-m,-c}.\end{equation}
In any case, in this work we will always have $m \in \Q$.
\end{notation}

\begin{lemma}[Linearization. Lemma 2.1 in \cite{MM}]\label{lem:lin} 
Let $\calA_1$ be an annulus and let $\phi: \calA_1 \to \calA_1$ be a homeomorphism that does not exchange boundary components. Suppose that $\phi|_{\partial \calA_1}$ is periodic. Then, after an isotopy of $\phi$ preserving the action at the boundary, there exists a parametrization $\eta:\MS^1 \times I\to \calA_1$ such that $$\phi=\eta\circ \calD_{-m,-c}\circ\eta^{-1}$$ for some $m,c\in \Q$. 
\end{lemma}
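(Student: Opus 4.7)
The plan is to normalize the boundary behavior of $\phi$, extend the normalization to the whole annulus, and absorb any residual Dehn-twist ambiguity into the parameter $m$. First I would use that any periodic orientation-preserving self-homeomorphism of $\MS^1$ has rational Poincar\'e rotation number and is topologically conjugate to the corresponding rotation. Applying this separately to each boundary circle of $\calA_1$, I would choose orientation-preserving parametrizations $\eta_0,\eta_1\colon\MS^1\to\partial_i\calA_1$ that conjugate the two restrictions $\phi|_{\partial_i\calA_1}$ to the rotations $x\mapsto x-c$ and $x\mapsto x-c-m_0$ respectively, for some $c,m_0\in\Q$.

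Next I would extend $\eta_0$ and $\eta_1$ to a global parametrization $\eta\colon\MS^1\times I\to\calA_1$; this is possible because any orientation-preserving self-homeomorphism of $\MS^1$ is isotopic to the identity, so a collar interpolation works. Setting $\psi:=\eta^{-1}\circ\phi\circ\eta$, by construction $\psi$ and the model map $\calD_{-m_0,-c}$ agree pointwise on $\partial(\MS^1\times I)$. Consequently, $\calD_{-m_0,-c}^{-1}\circ\psi$ fixes the boundary pointwise and therefore represents a class in the mapping class group of the annulus relative to its boundary, which is well known to be infinite cyclic and generated (in these coordinates) by the Dehn twist $\calD_{-1,0}$. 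Let $k\in\Z$ be the integer such that $\calD_{-m_0,-c}^{-1}\circ\psi$ is isotopic rel boundary to $\calD_{-k,0}$.

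I would then set $m:=m_0+k\in\Q$. Using the composition rule \eqref{eq:Dehn_sum}, $\calD_{-m,-c}=\calD_{-m_0,-c}\circ\calD_{-k,0}$, so $\psi$ is isotopic relative to the boundary to $\calD_{-m,-c}$; conjugating by $\eta$ then yields the desired isotopy of $\phi$ to $\eta\circ\calD_{-m,-c}\circ\eta^{-1}$. The main subtlety I anticipate is the bookkeeping in this last step: one must verify that replacing $m_0$ by $m_0+k$ with $k\in\Z$ leaves the boundary action of $\calD_{-m,-c}$ unchanged (because angles on $\MS^1=\R/\Z$ are taken mod one), while simultaneously shifting its mapping class rel boundary by exactly $k$ copies of the Dehn twist. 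These two facts together ensure that the $\Z$-ambiguity in $MCG(\calA_1,\partial\calA_1)$ is absorbed precisely by the integer freedom in $m$, and that the constructed isotopy preserves $\phi|_{\partial\calA_1}$ at every stage, as required.
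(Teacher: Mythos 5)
Your proposal is correct, and it fills a gap the paper deliberately leaves open: the paper gives no argument for this lemma at all, it simply quotes Lemma~2.1 of \cite{MM} (Matsumoto--Montesinos). Your argument is the standard linearization proof and it works: conjugate each periodic boundary restriction to a rational rotation, extend the two boundary parametrizations to a global one $\eta:\MS^1\times I\to\calA_1$, observe that $\calD_{-m_0,-c}^{-1}\circ\psi$ with $\psi=\eta^{-1}\circ\phi\circ\eta$ fixes $\partial(\MS^1\times I)$ pointwise and hence lies in $MCG(\MS^1\times I,\partial)\cong\Z$, and absorb the resulting integer $k$ into $m:=m_0+k$ using $\calD_{-m_0,-c}\circ\calD_{-k,0}=\calD_{-m,-c}$; since $k\in\Z$ acts trivially on $\MS^1=\R/\Z$, the boundary action is unchanged, so the isotopy preserves $\phi|_{\partial\calA_1}$ as required, and $m,c\in\Q$ because rotation numbers of periodic circle homeomorphisms are rational. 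Two small points you should make explicit: (i) $\phi$ must be taken orientation-preserving (as it always is in the paper's context), since otherwise it cannot be conjugate to $\calD_{-m,-c}$ even though a reflection is periodic on the boundary; (ii) when choosing $\eta_0,\eta_1$ you must take them coherently oriented so that they extend to a single parametrization of the annulus --- this may force you to replace one of the conjugated rotations by its inverse, which only changes the value of $m_0$ and is harmless. With these remarks your proof is complete and is essentially the argument behind the cited result.
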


\begin{remark}\label{re:lin} In the case $\phi|_{\partial \calA_1}$ is the identity, we have that $$\phi= \eta\circ \calD_{m,0}\circ\eta^{-1}$$
for some $m\in \Z$. 

In the case $m=1$, that is the conjugacy class of $\calD_{1,0}$, is what is called in the literature negative Dehn twist  (compare to convention in \Cref{rem:convention}). 

The same name of negative Dehn twist is used for the  extension of $\phi$  by the identity to a homeomorphism of a bigger surface . 
\end{remark}

\begin{remark}\label{re:unic_sc}
 Note that $m$ and $c$ are completely determined in Lemma \ref{lem:lin}. The parameter $c$ (which only matters modulo $\Z$) equals the rotation number of $\phi|_{\eta(\MS^1\times\{0\})}$. The sum $c+m$ modulo $\Z$ equals the rotation number of $\phi|_{\eta(\MS^1\times\{1\})}$ measured with the orientation induced by the direct identification with $\MS^1 \times \{0\}$ (and not as boundary of the annulus). Moreover, the parameter $m$ is completely determined since $\mathcal{D}_{m,c}$ and $\mathcal{D}_{m',c}$ with $m\neq m'$ and $m\equiv m'\ mod\ \Z$ are never isotopic relative to the boundary. 
\end{remark}

\begin{definition}[Screw number]\label{def:screw_number}
%Let $\phi$ be a pseudo-periodic homeomorphism as in \Cref{theo:canonical}.
Let  $\{\calA_i\}_{i=1}^\alpha$ be a set of annuli cyclically permuted by a homeomorphism $\phi$, i.e. $\phi(\calA_i) = \calA_{i+1}$ and $\phi(\calA_\alpha)=\calA_1$. Denote $\calA:=\bigcup_{i=1}^\alpha \calA_i$. Assume that $\phi|_{\partial\calA}$ is periodic and let $n$ be its order. %Assume also that $\phi^{\alpha}|_{\calA_i}$ does not interchange boundary components.
 
By \Cref{re:lin}, $\phi^n|_{\calA_i}$ equals a conjugate to $\calD_{e_i,0}$ for a certain $e \in \Z$. We define 
\begin{equation}
\label{eq:screw}s(\calA_i):=-\frac{e}{n} \theta
\end{equation} 
where $\theta=\alpha$ if $\phi^{\alpha}|_{\calA_i}$ does not interchange boundary components and $\theta=2\alpha$ in the 
other case. 
We call $s(\calA_i)$ the {\em screw number} of $\phi$ at $\calA_i$.

Let $\calC=\{\calC_i\}$ be a system of cut curves subordinated to a pseudo-periodic homeomorphism $\phi$. Assume $\phi$ is in quasi-canonical form (see \Cref{re:stan}) with respect to $\calC$. We define the screw numbers $s(\calC_i)$ as the screw number of the corresponding annulus. 
\end{definition}

\begin{remark}\label{re:screw_number}
Let  $\{\calC_i\}_{i=1}^\alpha$ be curves cyclically permuted by a homeomorphism $\phi$ and that are a subset of a system of cut curves subordinated to a pseudo-periodic homeomorphism $\phi$ that we assume in quasi-canonical form (see \Cref{re:stan}).  %We can compute the screw numbers of the corresponding annuli.
It can be checked that these numbers are well defined independently of the choice of $\calC$ and $\calA$. 
\end{remark}

\begin{remark} 
Compare \Cref{def:screw_number} with \cite[p.4]{MM} p.4 and with \cite[Definition 2.4]{MM}. 
The original definition is due to Nielsen \cite[Section 12]{Niel1}. 
\end{remark}

\begin{remark}\label{rem:convention_adj}
By Corollary 2.2 in \cite{MM} we have that $\calD_{s,c}$ has a screw number equal to $-s$. In particular the negative Dehn twist $\calD_{1,0}$ has screw number $-1$.
\end{remark}

We start with an easy lemma that is important for \Cref{thm:mixed_model} 
\begin{lemma}\label{lem:qp} 
Let  $\{\calA_i\}_{i=1}^\alpha$ be a set of annuli and $\phi$ as in \Cref{def:screw_number} permuting them cyclically. Suppose that 
$\phi|_{\calA_1}^\alpha$ does not interchange boundary components. Then, after an isotopy of $\phi$ preserving the action
at all the boundary components, there exist coordinates $$\eta_{i}:\MS^1 \times I\to \calA_i$$ for the annuli in the 
orbit such that 
$$ \eta_{j+1}^{-1}\circ\phi\circ\eta_{j}=\calD_{-m/\alpha,-c/\alpha} $$ where $m$ and $c$ are associated to
$\calA_1$ and $\phi^\alpha|_{\calA_1}$ as in \Cref{lem:lin}. 
\end{lemma}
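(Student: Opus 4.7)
The plan is to produce the $\eta_j$ from the single parametrization $\eta_1$ supplied by \Cref{lem:lin} applied to $\phi^{\alpha}|_{\calA_1}$, by pushing $\eta_1$ forward under powers of $\phi$ and correcting at each step by a fractional iterate of the linearization. In other words, once $\phi^{\alpha}|_{\calA_1}$ is conjugated to $\calD_{-m,-c}$, we distribute this linear twist evenly across the $\alpha$ steps of the cycle so that each individual step becomes the $\alpha$-th root $\calD_{-m/\alpha,-c/\alpha}$.

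First I would linearize $\phi^{\alpha}|_{\calA_1}$. By \Cref{lem:lin} (whose hypothesis holds because we assume $\phi^{\alpha}|_{\calA_1}$ does not swap boundary components), after an isotopy of $\phi^{\alpha}|_{\calA_1}$ preserving its boundary action there is a parametrization $\eta_1\colon\MS^1\times I\to\calA_1$ with $\eta_1^{-1}\circ\phi^{\alpha}\circ\eta_1=\calD_{-m,-c}$. To upgrade this to an isotopy of $\phi$ itself that preserves the action on \emph{every} annulus boundary, I would concentrate the modification in a single factor of the composition
$$\phi^{\alpha}|_{\calA_1}=\phi|_{\calA_{\alpha}}\circ\bigl(\phi|_{\calA_{\alpha-1}}\circ\cdots\circ\phi|_{\calA_1}\bigr);$$
precomposing the \Cref{lem:lin} isotopy with the inverse of the rightmost factor produces an isotopy of $\phi|_{\calA_{\alpha}}$ that is the identity on $\partial\calA_{\alpha}$, and extending it by the identity outside $\calA_{\alpha}$ gives the required global isotopy of $\phi$.

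Then I would set
$$\eta_j := \phi^{j-1}\circ \eta_1\circ \calD_{-m/\alpha,\,-c/\alpha}^{-(j-1)},\qquad j=1,\ldots,\alpha.$$
Since $\phi$ cyclically permutes the $\calA_i$, each $\eta_j$ is a parametrization of $\calA_j$. For $1\leq j<\alpha$ the identity $\eta_{j+1}^{-1}\circ\phi\circ\eta_j=\calD_{-m/\alpha,-c/\alpha}$ is immediate by direct substitution together with the cancellation $\phi^{-j}\circ\phi^{j}=\mathrm{id}$. The nontrivial check, and the main obstacle, is cyclic closure at $j=\alpha$, where we must take $\eta_{\alpha+1}:=\eta_1$: we need to verify that iterating $\calD_{-m/\alpha,-c/\alpha}$ exactly $\alpha$ times recovers the linearization of $\phi^{\alpha}|_{\calA_1}$. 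Using the additivity formula~(\ref{eq:Dehn_sum}) one has $\calD_{-m/\alpha,-c/\alpha}^{\alpha}=\calD_{-m,-c}$, which combined with Step~1 yields
$$\eta_1^{-1}\circ\phi\circ\eta_\alpha=\bigl(\eta_1^{-1}\circ\phi^{\alpha}\circ\eta_1\bigr)\circ\calD_{-m/\alpha,-c/\alpha}^{-(\alpha-1)}=\calD_{-m,-c}\circ\calD_{-m/\alpha,-c/\alpha}^{-(\alpha-1)}=\calD_{-m/\alpha,-c/\alpha},$$
closing the cycle and completing the argument.
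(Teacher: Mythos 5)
Your proposal is correct and takes essentially the same route as the paper: the paper's proof also defines $\eta_j=\phi^{j-1}\circ\eta_1\circ\calD_{m(j-1)/\alpha,\,c(j-1)/\alpha}$, which is exactly your $\phi^{j-1}\circ\eta_1\circ\calD_{-m/\alpha,-c/\alpha}^{-(j-1)}$, and closes the cycle with the same computation $\eta_1^{-1}\circ\phi\circ\eta_\alpha=\calD_{-m,-c}\circ\calD_{m(\alpha-1)/\alpha,\,c(\alpha-1)/\alpha}=\calD_{-m/\alpha,-c/\alpha}$. Your additional explanation of how the \Cref{lem:lin} isotopy of $\phi^\alpha|_{\calA_1}$ is realized as an isotopy of $\phi$ supported on $\calA_\alpha$ and stationary on the boundary is a detail the paper compresses into ``isotope $\phi$ if necessary.''
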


\begin{proof}

Isotope $\phi$ if necessary in order to take a parametrization of $\calA_1$, associated to $\phi^{\alpha}:\calA_1 \rightarrow \calA_1$ as in \Cref{lem:lin}. Denote this parametrization by  $\eta_1:\MS^1 \times I\to \calA_1$.

Define recursively $\eta_j:=\phi\circ\eta_{j-1}\circ \calD_{m/\alpha,c/\alpha}$ (see \Cref{not:D_an}).
Then, we have
$$\eta_{j+1}^{-1}\circ\phi\circ\eta_{j}=\calD_{-m/\alpha,-c/\alpha}.$$
Since for every $j$ we have that $\eta_j=\phi^{j-1}\circ\eta_1\circ \calD_{m(j-1)/\alpha,c(j-1)/\alpha}$ we have also that 
$$\eta_1^{-1}\circ\phi\circ \eta_{\alpha}=\eta_1^{-1}\circ\phi\circ\phi^{\alpha-1}\circ\eta_1\circ \calD_{m(\alpha-1)/\alpha,c(\alpha-1)/\alpha}=\calD_{-m/\alpha,-c/\alpha}.$$
\end{proof}

\begin{remark}\label{re:sk} 
In particular, in the setting of this lemma, we have the equality $m=s(\calA_1)$ since by (\ref{eq:Dehn_sum}) we have 
$e=m\cdot n/\alpha$ and in this case we have $\theta=\alpha$ in  (\ref{eq:screw}).

Moreover, after this proof we can check that $\eta_k^{-1} \circ\phi^\alpha\circ\eta_k=\calD_{-s,-c}$ to see that the screw number $s=s(\calA_i)$ and the parameter $c$ modulo $\Z$ of \Cref{lem:qp} only depend on the orbit of $\calA_i$. 

\end{remark}

\subsection{Gluings and boundary Dehn twists}
In this section we introduce some notions and examples that will be used in the rest of the paper. 
We begin with two easy remarks.

\begin{remark}\label{re:g2}
Given a homeomorphism $\phi$ of a surface $\Si$ with $\partial \Si\neq \emptyset$. Let $C$ be a connected component of $\partial \Si$. Let $A$ be a cylinder parametrized by $\eta:\MS^1 \times I\to A$. We glue $A$ with $\Si$ by an identification $f:\MS^1 \times \{0\}\to C$. Then, we can extend $\phi$ \emph{trivially} along $A$ by defining a homeomorphism $\bar{\phi}$ of $\Si\cup_f A$ as $\bar{\phi}|_{\Si}=\phi$ and for any $p\in A$ we set $\phi(p)=\phi(f\circ p_1\circ\eta^{-1}(p))$ where $p_1$ is the canonical projection from $\MS^1 \times I$ to $\MS^1=\MS^1 \times \{0\}$.
\end{remark}

\begin{remark}\label{re:g1}
Given a homeomorphism $\phi$ of a surface $\Si$ with $\partial \Si\neq \emptyset$. Let $C$ be a connected component of $\partial \Si$. Let $A$ be a compact collar neighborhood of $C$ (isomorphic to $I\times C$) in $\Si$.  Let $\eta:\MS^1 \times I\to A$ be a parametrization of $A$, with $\phi(\MS^1 \times \{1\})=C$.

Then, there exists a homeomorphism $\phi'$ isotopic to $\phi$ relative to the boundary such that 
\begin{itemize}
\item the restriction to $A$ satisfies $p_2\circ\eta^{-1}\circ\phi'|_{A}\circ\eta(t,x)=p_2\circ\eta^{-1}\circ\phi'|_{A}\circ\eta(t',x)$ for all $t, t'\in I$, where $p_2:\MS^1 \times I\to \MS^1$ is the canonical projection. 
%\item the restriction to $\overline{\Si\setminus A}\approx\Si$ is conjugate to $\phi$. 
\end{itemize}
\end{remark}

\begin{definition}[Boundary Dehn twist]\label{def:boundary_dehn}
Let $C$ be a component of $\partial \Si$ and let $A$ be a compact collar neighbourhood of $C$ in $\Si$. Suppose that $C$ has a metric and total length  is equal to $\ell$. Let $\eta:\MS^1 \times I\to A$ be a parametrization of $A$, such that  $\eta|_{\MS^1 \times \{1\}}:\MS^1 \times \{1\} \to C$ is an isometry. Suppose that $\MS^1$ has the metric induced from taking $\MS^1 = \R/ \ell\Z$ with $\ell \in \R_{>0}$ and the standard metric on $\R$. A \emph{boundary Dehn twist} of length $r \in \R_{>0}$ along $C$ is a homeomorphism $\calD^\eta_{r} (C)$ of $\Si$ such that:
\begin{enumerate}
\item it is the identity outside $A$
\item the restriction of $\calD^\eta_{r} (C)$ to $A$ in the coordinates given by $\eta$ is given by $(x,t) \mapsto ( x+ r\cdot t, t).$
\end{enumerate}
The isotopy type of $\calD^\eta_{r} (C)$ by isotopies fixing the action on $\partial \Si$ does not depend on the parametrization $\eta$. When we write just $\calD_{r}(C)$, it means that we are considering a boundary Dehn twist with respect to {\em some} parametrization $\eta$.
\end{definition}

\begin{example}\label{ex:comp_tat}%\label{rem:bound_tat_prop}
We can restate the $\ell-$\tat property in terms of boundary Dehn twists of length $\ell$ as follows. Let $(\Si, \G)$ be a thickening surface of a metric ribbon graph $\G$. Let $g_\G: \Si_\G\to \Si$ be the gluing map. Consider the pull back metric on $g^{-1}(\G)$. 
Denote by $\calD_{\underline{\ell}}$ the composition of the boundary Dehn twists $\calD_{\ell}$ along each $\TG_j \subset \TG$. Then $(\Si, \G)$ holds the $\ell$-\tat property if and only if $\calD_{\underline{\ell}}$ is compatible with the gluing $g_\G$. We see from this that the lenghts of $\TG_j$ are in $\ell\Q_+$.
\end{example}

%%%%%%%%%%%%%%%%%%%%
\section{Mixed \tat graphs and twists and pseudo-periodic homeomorphisms.}\label{sec:intro_mixed}
In this section we introduce the notion of mixed \tat graphs and the homeomorphisms that they induce which we call mixed \tat twists. Mixed \tat graphs and twists are generalizations of \tat graphs and twists, but they are able to model pseudo-periodic homeomorphisms. Our main result is a realization theorem for a certain class of homeomorphisms whose isotopy classes are representable by a \tat twists (see  \Cref{thm:mixed_model}). The class contains the monodromies of arbitrary plane branches, generalizing the construction for branches with 2 Puiseux pairs in~\cite{Camp1}.  

Before introducing the definition of mixed \tat graphs and twists, we analyze 
in \Cref{sec:restricted} the structure of the class of pseudo-periodic 
homeomorphisms refrerred above, and show how to codify them using {metric 
spines}. We think this section makes the reading easier, but strictly speaking 
the reader could skip it now, read the definition of mixed \tat graphs and 
twists in \Cref{sec:mixed} and \Cref{sec:mixed_homeo}, read the statement of 
\Cref{thm:mixed_model} in \Cref{sec:final}, and come back to 
\Cref{sec:restricted} for its proof. In a sequel article by the third author 
and B. Sigurdsson~\cite{Bal} it is proved a general realization theorem for 
pseudo-periodic mapping classes in terms of the mixed \tat graphs defined here.

\subsection{A restricted type of pseudo-periodic homeomorphisms}
\label{sec:restricted}

In this section we work with a restricted type of pseudo-periodic homeomorphisms and give a natural construction of an embedded metric filtered graph that is a retract of the surface and that codifies the homeomorphism up to isotopy relative to the boundary. It is important, however, to notice that, unlike in the case of the previous sections, the graph is not left invariant by the homeomorphism (in fact this would force the homeomorphism to be boundary-free periodic).  
This study will lead to the more general definition of mixed \tat graph and the induced \tat twist developed in the next sections.

We start giving the hypothesis of our homeomorphism. 

%In this section, we  work with a special type of pseudo-periodic homeomorphisms that we are able to codify, up to isotopy, with a mixed \tat graph. 

Let $\phi$ be a pseudo-periodic homeomorphism of a surface $\Si$ with $\partial\Si\neq \emptyset$. 
%\begin{notation}\label{not:qp1} 
Let $\calC$ be a system of cut curves for $\phi$ as in \Cref{def:pseudo}. Let $G(\phi, \Si)$ be a graph constructed as follows:
\begin{enumerate}
\item It has a vertex for each connected component of ${\Si}\setminus\calC$.
\item There are as many edges joining two vertices as curves in $\calC$ intersect the two surfaces corresponding to those vertices.
\end{enumerate}
%\end{notation}
%Now we impose our restrictions on the homeomorphism $\phi:\Si \rightarrow \Si$ for this subsection. 

\textbf{Assumptions on $\phi$}:\label{page:assump}
\begin{enumerate}
\item the graph $G(\phi, \Si)$ is a tree and
\item the screw numbers are all non-positive. 
\item We assume that 
\begin{itemize}
\item[(3')] it leaves at least one boundary component pointwise fixed,             
\item[(3'')] the fractional Dehn twist coefficients along at least one of these fixed-boundary components is positive (we extend in the obvious way the notion of fractional Dehn twist coefficients in~\Cref{def:gen_rot} %-\ref{def:rot_G} 
and~\Cref{lem:equal_rot} to pseudo-periodic homeomorphisms by considering the restriction of the homeomorphism to the connected component in $\Si \setminus \calA$ that contains $C$).
\end{itemize}
\end{enumerate}
We denote by $\partial^1\Si$ the union of some, at least one, connected components of $\partial\Si$ contained in a single connected component of $\Si \setminus \calC$ and that are fixed pointwise by $\phi$ and have positive fractional Dehn twist coefficient.

We will obtain a metric spine codifying $[\phi]_{\partial \Si,\phi|_{\partial \Si}}$ (recall \Cref{not:iso}).
%\begin{itemize}
%\item  are fixed pointwise by $\phi$, 
%\item are contained in a single connected component of $\Si \setminus \calC$ and 
%\item have positive fractional Dehn twist coefficient.
%\end{itemize}
%Note that we can extend the definition of the fractional Dehn twist coefficient of a homeomorphism $\phi$ with respect to a boundary component $C$ (see \Cref{def:gen_rot} and \Cref{lem:equal_rot}) to pseudo-periodic homeomorphisms that fix $C$ by considering the restriction to the connected component in $\Si \setminus \calA$ that contains $C$.

\begin{remark} Observe that $G(\phi, \Sigma)$ being a tree implies that for any invariant orbit of annuli associated to 
the system of cut curves, we are under the hypothesis of \Cref{lem:qp}. 
\end{remark}

\begin{figure}
\includegraphics[width=60mm]{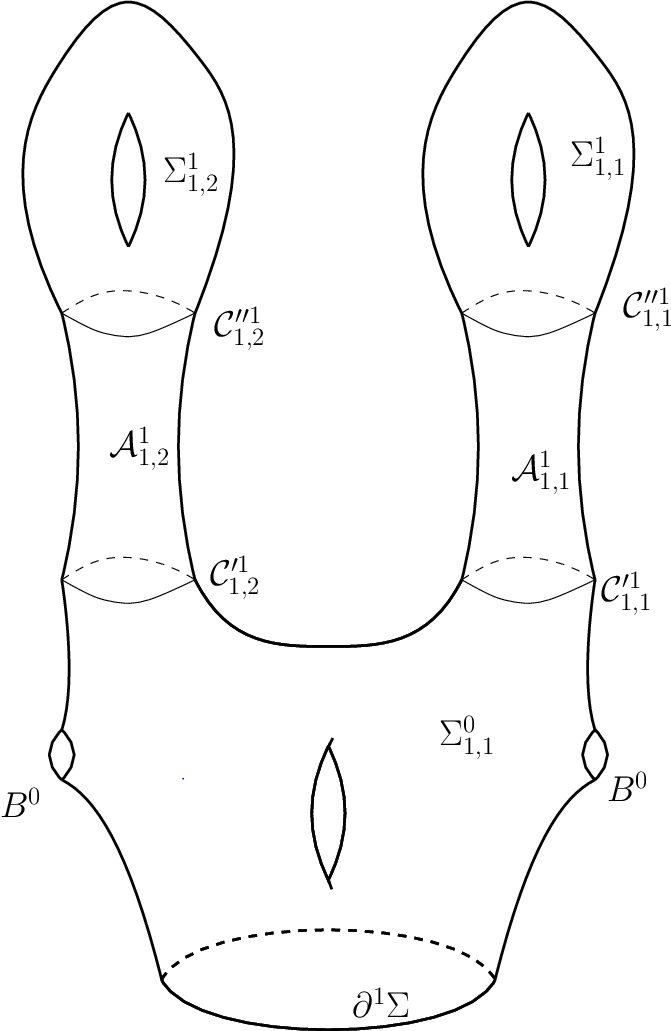}%{rests}
\caption{This figure helps understand \Cref{not:qp2}. It is the surface of genus $3$ and $3$ boundary components. Suppose that the notation is induced by an homeomorphism that rotates by $\pi$ around the $z$-axis composed by some Dehn twist around the annuli $\calA$ and isotoped to be the identity on the bottom boundary. The boundaries $B^0$ are defined after the notation as those in $\Si^1$ but not in $\partial^1 \Si$.}
\label{fig:rest}
\end{figure}

\begin{notation}\label{not:qp2}
%As in the statement
Figure \ref{fig:rest} may help.  
We assume $\phi$ is in the quasi-canonical form of \Cref{re:stan} with respect to $\partial^1\Si$. We denote by $\hat{\Si}$ the closure of $\Si\setminus\calA$ in $\Si$.
  
Let $v(G(\phi, \Si))$ be the set of vertices of $G(\phi, \Si)$. We choose as root of $G(\phi,\Si)$ the vertex $v \in v(G(\phi, \Si))$ corresponding to the connected component of $\Si\setminus \calA$ that contains $\partial^1\Si$. We say that $G(\phi, \Si)$ is rooted at $v$.

Since $\hat{\phi}$ permutes the surfaces in $\hat{\Si}$, it induces a permutation of the set $v(G(\phi, \Si))$ which we denote by $\sigma_\phi$.
 
We begin filtering the set $v(G(\phi, \Si))$:
\begin{enumerate}
\item Denote the vertex chosen as the root by $v^0_{1,1}$. Let $V^0:=\{v^0_{1,1}\}$.
\item Let $d: G(\phi,\Si) \rightarrow \Z_{\geq 0}$  be the distance function to $V^0$, that is, $d(v)$ is the number of edges of the smallest bamboo in $G(\phi,\Si)$ that joins $v$ with $V^0$. Let $V^i:= d^{-1}(i)$. Observe that the permutation $\sigma_\phi$ leaves the set $V^i$ invariant. There is a labelling of $V^i$ induced by the orbits of $\sigma_\phi$: suppose it has $\beta_i$ different orbits. For each $j=1, \ldots, \beta_i$, we label the vertices in that orbit by $v^i_{j,k}$ with $k=1, \ldots, \alpha_j$ so that $\sigma_\phi(v^i_{j,k})=v^i_{j,k+1}$ and $\sigma_\phi(v^i_{j,\alpha_j})=v^i_{j,1}$. 
\end{enumerate}

We accordingly give names to distinct parts of the surface $\Si$ (see Figure \ref{fig:ex_grafo} for an example):
\begin{itemize}
	\item Denote by $\Si^i_{j,k}$ the surface in $\hat{\Si}$ corresponding to the vertex $v^i_{j,k}$.
	\item Denote by $\Si^i$ the union of the surfaces corresponding to the vertices in $V^i$. Note that $\Si^0$ equals $\Si^{0}_{1,1}$.
	\item Denote by $\calA^{i+1}_{j,k}$ the only annulus in $\calA$ that intersects both $\Si^{i+1}_{j,k}$ and $\Si^{i}$. Observe that if there were more than one such annulus, $G(\phi, \Si)$ would not be a tree.
	\item Denote by $\calC'^{i+1}_{j,k}$ the boundary component of $\calA^{i+1}_{{j,k}}$ that lies in $\partial \Si^{i}$ and by $\calC''^{i+1}_{j,k}$ the other boundary component. 
	\item Denote $\calA^{i+1}$ the union of all the annuli that intersect $\Si^{i+1}$ and $\Si^{i}$ and define analogously $\calC'^{i+1}$ and $\calC''^{i+1}$.
	\item  We also define recursively $$\Si^{\leq 0}:= \Si^0, \ \ \Si^{\leq i+1}:= \Si^{\leq i}\cup \calA^{i+1} \cup \Si^{i+1}.$$
\end{itemize}

We recall that  $\alpha^{i}_j$ is the smallest positive number such that $\phi^{\alpha^{{i}}_j}(\calA^{i+1}_{j,k})=\calA^{i+1}_{j,k}$, and in consequence the least such that $\phi^{\alpha^{{i}}_j}(\calC'^{i+1}_{j,k})=\calC'^{i+1}_{j,k}$ and $\phi^{\alpha^{{i}}_j}(\calC''^{i+1}_{j,k})=\calC''^{i+1}_{j,k}$.
\end{notation} 

\textbf{Construction of the metric relative spines for $\Si^{\leq i}$.}

We start now recursively building an embedded metric spine  $(\G^{\leq i},B^{\leq i})$ for $\Si^{\leq i}$ codifying $\phi_{\Si^{\leq i}}$. The way in which the metric spine codifies the automorphism will become clear along the construction  
The starting point is the following: 
\begin{itemize}
\item By \Cref{thm:rel}, there exists a $\pi$-relative \tat graph $(\Lambda^0, B^0)$ embedded in $\Si^0$ 
\begin{itemize}
\item it induces $\phi|_{\Sigma^0}$ relative to $\partial^1\Si$, that is $[\phi|_{\Si^0}]_{\partial^1 \Si} = [\phi_{\Lambda^0}]_{\partial^1 \Si}$  
\item it contains the boundary components of $\Si_0$ that intersect with $\calA^1$ and any other boundary component of $\Si^0$ different from $\partial^1$,  that is $B^0=\partial\Si^0 \setminus \partial^1 \Si$. 
\item $\phi|_{B^0} = \phi_{\Lambda^0}|_{B^0}$, (in particular $\phi|_{B^0}$ is an isometry).
\item All the vertices in $B^0$ have valency $3$ (see \Cref{re:v3}). 
\end{itemize}

\end{itemize}

%We will build recursively an embedded metric relative spine $(\G^{\leq i},B^{\leq i})$ for $\Si^{\leq i}$ for every $i$. 
To construct the metric relative spine  $(\G^{\leq i+1},B^{\leq i+1})$  from  $(\G^{\leq i},B^{\leq i})$ we will use the following: 
\begin{itemize} 
\item  parametrizations $\{\eta^{i+1}_{j,k}\}$ of the annuli $\{\calA^{i+1}_{j,k}\}$ as in \Cref{lem:qp} for each $j=1, \ldots, \beta_{i+1}$ and for each $k=1, \ldots, \alpha_j$. For using this lemma we might have to isotope $\phi|_{\calA^{i+1}_{j,k}}$ by an isotopy that preserves the action at the boundary, so in particular, it does not change $\phi$ outside $\calA$. We choose the parametrizations such that $\eta^{i+1}_{j,k}(\MS^1\times\{0\})=\calC'^{i+1}_{j,k}$.

\item  a metric relative spine $(\Lambda^{i+1}_{j,k}, B^{i+1}_{j,k})$ for $\Si^{i+1}_{j,k}$ invariant by $\phi^{\alpha^i_{j}}|_{\Si^{i+1}_{j,k}}$ for each $j=1, \ldots, \beta_{i+1}$, and for each $k=1,..., \alpha_j^i-1$ 
such that 
\begin{itemize}
\item[(i)] $B^{i+1}_{j,k}$ contains all the boundary components of $\Si^{i+1}_{j,k}$ except $\calC''^{i+1}_{j,k}$ (which is invariant by $\phi^{\alpha^i_j}|_{\Si^{i+1}_{j,k}}$). This may include boundary components from the surface $\Si$ not in $\partial^1\Si$, not only boundary components with non-empty intersection with $\calA^{i+1}$, 
\item[(ii)] all the vertices in  $B^{i+1}_{j,k}$ have valency 3 (the important ones are the ones in $\calC'^{i+1}$).
\end{itemize}
Note that condition (1) of the {\em assumptions on $\phi$} (page \pageref{page:assump}) implies that the surface obtained by cutting $\Si^{i+1}_{j,k}$ along $\Lambda^{i+1}_{j,k}$ is a unique cylinder $\widetilde{\Si}^{i+1}_{j,k}$. Let $g_{i+1}|_{\widetilde{\Si}^{i+1}_{j,k}}:\widetilde{\Si}^{i+1}_{j,k}\to \Si^{i+1}_{j,k}$ be the gluing mapping. 
\item mappings  ${r}^{i+1}_{j,k}:$ \begin{equation}\label{eq:r}{r}^{i+1}_{j,k}:\widetilde{\Lambda}^{i+1}_{j,k}\times I\to {\Si}^{i+1}_{j,k},\end{equation}
composition of the gluing map $g_{i+1}|_{\widetilde{\Si}^{i+1}_{j,k}}$ with a  
product structure for the cylinder $\widetilde{\Si}^{i+1}_{j,k}$
\begin{equation}
\label{eq:rtilde}
\widetilde{r}^{i+1}_{j,k}:\widetilde{\Lambda}^{i+1}_{j,k}\times I\to \widetilde{\Si}^{i+1}_{j,k},\end{equation} 
which is invariant by the lifting of $\phi^{\alpha_j}|_{\Si^{i+1}_{j,k}}$ to $\widetilde{\Si}^{i+1}_{j,k}$. 
%We denote by ${r}^{i+1}_{j,k}$ the composition of $\widetilde{r}^{i+1}_{j,k}$ with the gluing map from the cylinder to $\Si^{i+1}_{j,k}$. 

We choose ${r}^{i+1}_{j,1}(x,1)$ such that:
\begin{itemize}
\item[(iii)] ${r}^{i+1}_{j,k}(x,1)$ is not a vertex of $\Lambda^{i+1}_{j,1}$ whenever $\eta_{j,k}^{i+1}(x,0)$ is a vertex of $\G^{\leq i}$ %such that $\widetilde{r}^{i+1}_{j,1}(x,0)$ is a vertex of $\G^{\leq i}$ or the image of a vertex of $\G^{\leq i}$ by any power of $\phi$. %This can be done when choosing the product structure $\widetilde{r}^{i+1}_{j,1}$ by lifting an appropriate product structure in the cylinder decomposition in the quotient surface $(\Si^{i+1})^{\phi^{\alpha_j}}$  (see \Cref{re:tat_r}). 
\end{itemize}

\end{itemize}

The reader can now have a look at the final construction of the spine in (a)-(e) in page \pageref{a-e}, in order to have an impression of the final construction.. 

We first explain  more details about how to get 
$(\Lambda^{i+1}_{j,k},B^{i+1}_{j,k})$ and the $r^{i+1}_{j,k}$  satisfying $(i)-(iii)$. Afterwards we will show how we choose the right metric.  

%Anyway we also need to take care of the metric.  
%\textbf{Observation:} the relative components $B^i$ may contain boundary components from the surface $\Si$ as well as other boundary components that may be glued to give further levels of the filtration in the next step.

To construct  $\Lambda^{i+1}_{j,k}$, we first find  $\Lambda^{i+1}_{j,1}$ and $r_{j,1}^{i+1}$ and then define $\Lambda^{i+1}_{j,k+1}:=\phi^k(\Lambda^{i+1}_{j,1})$. % with the pullback metric. 

To find the spine $(\Lambda^{i+1}_{j,1}, B^{i+1}_{j,1})$ we consider the quotient map by the action of $\phi^{\alpha_j}$ in $\Si^{i+1}_{j,1}$ as in the proof of \Cref{thm:000} or \ref{thm:rel}. We choose a relative spine of the quotient surface such that: 
\begin{itemize}
\item[-] it  contains the image of all points whose isotropy subgroup by the action of the group generated by $\phi^{\alpha^i_{j}}|_{\Si^{i+1}_{j,k}}$ is non-trivial, 
\item[-] it contains all the boundary components except the image by the quotient map of $\calC''^{i+1}$,
\item[-] it has only vertices of valency 3 along the boundary.
\end{itemize} 
Then we denote by $\Lambda^{i+1}_{j,k}$ its preimage by the quotient map which is a relative spine satisfying (i)-(ii). Moreover, we can lift any regular retraction (or product structure in the cylinder) in the quotient and find a product structure for the cylinder $\widetilde{\Si}^{i+1}_{j,1}$
\begin{equation}\widetilde{r}^{i+1}_{j,1}:\widetilde{\Lambda}^{i+1}_{j,1}\times I\to \widetilde{\Si}^{i+1}_{j,1},\end{equation} that is invariant by the lifting of $\phi^{\alpha_j}|_{\Si^{i+1}_{1,j}}$ to $\widetilde{\Si}^{i+1}_{j,1}$. 
To get (iii) we choose carefully the retraction in the quotient such that $\widetilde{r}^{i+1}_{j,1}(x,1)$ does not correspond to a vertex of $\Lambda^{i+1}_{j,1}$ whenever $\widetilde{r}^{i+1}_{j,1}(x,0)$ is a point preimage either of a vertex of $\G^{\leq i}$ or the image of a vertex of $\G^{\leq i}$ by any power of $\phi$. 

Now we choose the right metric. 

The metric of the graph $\G^{\leq i}$ assigns a metric on $\calC'^{i+1}_{j,k}$ for every $j,k$. We use the natural identification from $\calC'^{i+1}_{j,k}$ to $\calC''^{i+1}_{j,k}$ by $\eta^{i+1}_{j,k}$ (i.e. $\eta^{i+1}_{j,k}(x,0)\mapsto\eta^{i+1}_{j,k}(x,1)$) to put a metric on $\calC''^{i+1}_{j,k}$. 

Note that from the 2 boundary components of $\widetilde{\Si}^{i+1}_{j,1}$, one comes from cutting $\Lambda^{i+1}_{j,1}$ and the other is $ \calC''^{i+1}_{j,1}$. Now, for each $j=1, \ldots, \beta_{i+1}$ we put a metric on $\widetilde{\Lambda}^{i+1}_{j,1}$ by pull back the metric in $ \calC''^{i+1}_{j,1}$ with the mapping given by $\widetilde{r}^{i+1}_{j,1}(x,1)\mapsto \widetilde{r}^{i+1}_{j,1}(x,0)$.  

Let $g_{i+1}|_{\tilde{\Lambda}^{i+1}_{j,1}}:\tilde{\Lambda}^{i+1}_{j,1}\to \Si^{i+1}_{j,1}$ be the restriction of the gluing map. The metric on $\tilde{\Lambda}^{i+1}_{j,1}$ is compatible with the gluing because $\phi^{\alpha_j}|_{\calC''^{i+1}_{j,1}}$ is an isometry and $\phi^{\alpha_j}$ respects retraction lines. %In particular, the metric on $\Lambda^{i+1}_{j,1}$ is invariant by $\phi^{\alpha_j}$. 

We define the metric on  $\Lambda^{i+1}_{j,k+1}=\phi^k(\Lambda^{i+1}_{j,1})$ to be the pullback metric on $\Lambda^{i+1}_{j,1}$.

We denote by $(\Lambda^{i+1},B^{i+1})$ the union of the  graphs $(\Lambda^{i+1}_{j,k},B^{i+1}_{j,k})$ for all $j, k$. Note that the metric that we have put on  makes $\phi|_{\Lambda^{i+1}}$ an isometry. 

We build ${\G}^{\leq i+1}$ starting with  $(\G^{\leq i},B^{\leq i})\hookrightarrow  \Si^{\leq i}\cont \Si^{\leq i+1}$ and doing following:
\begin{itemize}\label{a-e}
\item[(a)] We remove $\calC'^{i+1}_{j,k}$ from $\G^{\leq i}$ for every $j$ and $k$. 
\item[(b)] For every edge $e$ in $\G^{\leq i}\setminus \calC'^{i+1}$ containing a vertex in $ \calC'^{i+1}_{j,k}$, if  $L$ is its length in $\G^{\leq i}$, then, we redefine its metric to $L - \epsilon$ (to simplify we take $\epsilon$ smaller than the lengths of every edge).

\item[(c)] We add the embedded segments $\eta^{i+1}_{j,k}(I\times \{x\})$ for all $x$ such that $\eta^{i+1}_{j,k}(0,x)$ is a vertex of $B^{i}$ (that is, a vertex in $\calC'^{i+1}$. We set the length of each of this segments to be $\epsilon/2$.

\item[(d)] We add the embedded segments ${r}^{i+1}_{j,k}(I\times \{x\})$ that concatenate with the ones added in the previous step. We set the length of each of this segments to be $\epsilon/2$.

\item[(e)] We add ${\Lambda}^{i+1}$ with the metric we  defined previously. %(which makes $\widetilde{r}^{i+1}_{j,k+1}(x,0)\mapsto \widetilde{r}^{i+1}_{j,k+1}(x,1)$ an isometry).
\end{itemize}
We set $B^{\leq i+1}:=B^{\leq i}\cup B^{i+1}\setminus\calC'^{i+1}$. 

This is obviously a relative metric spine for $\Si^{\leq i+1}$. 

We observe that $(\G^{\leq i+1}_{{\Lambda}^{i+1}},B^{\leq i+1}_{{\Lambda}^{i+1}})$ is isometric to $(\G^{\leq i},B^{\leq i})$. This will allow us to codify all the information of the $(\G^{\leq i},B^{\leq i})$ in the metric spine $(\G^{\leq d}, B^{\leq d})$ (for $d$ maximal). We will  define a  filtration to keep the recursive steps in the construction we  have just explained.

\textbf{Recovering $[\phi]_{\partial^1}$ from the metric relative spines for $(\Si^{\leq i},B^{\leq i})$}.

Now we explain in which sense the sequence of metric spines $(\Gamma^{\leq i},B^{\leq i})$ that we have constructed, together with some extra numerical information, codify the automorphisms
$$[\phi_{\Si^{\leq i}}]_{B^{\leq i},\phi|_{B^{\leq i}}}$$
for all $i$.

For $i=0$ it is immediate and we don't need extra information because $(\G^{\leq 0},B^{\leq 0})$ is a $\pi$-\tat graph codifying it in the sense of \Cref{thm:rel}. 

Assume that we know how to recover the automorphisms up to a certain $i$, let us see how to recover the automorphism for $i+1$. 

Firstly, we make some observations about the original $\phi$ in quasi-canonical form as in (\ref{re:stan}). 

We define the homeomorphism $\bar{\phi}_i$ as the homeomorphism of the $(\Si^{\leq i+1})_{\Lambda^{i+1}}$  that coincides with ${\phi}|_{\Si^{\leq i}}$ in ${\Si^{\leq i}}$ and that extends \emph{trivially} to the remaining cylinders as in \Cref{re:g2} using the parametrizations $\eta_{j,k}^{i+1}$ and $\tilde{r}^{i+1}_{j,k}$ for every $j,k$ successively.

Recall \Cref{not:D_an} and for every $j,k$ consider 
the Dehn twists $$\eta_{j,k}^{i+1}\circ\calD_{-s_j / 
\alpha_j,0}\circ(\eta_{j,k}^{i+1})^{-1}$$ along the annuli $\calA^{i+1}_{j,k}$  
with  $s_j$ the screw number of $\phi$ on at $\calA^{i+1}_{j,k}$. We extend the 
composition of all this Dehn twists to $\Si^{\leq i+1}_{\Lambda^{i+1}}$ in the 
following way: we extend by the identity to $\Sigma^{\leq i}$ and we extend to 
the cylinders $\Si^{i+1}_{\Lambda^{i+1}}$ as in \Cref{re:g2} using the 
parametrizations $\tilde{r}^{i+1}_{j,k}$. We denote by 
$$\calD_i:\Si^{\leq i+1}_{\Lambda^{i+1}}\to\Si^{\leq i+1}_{\Lambda^{i+1}}$$
the extension that we just have constructed.
  
{Observe that $\calD_{i}\circ {\bar{\phi}}_i$ is compatible with the gluing $g_{i+1}$ because its restriction to  $\Si^{\leq i}\cup \calA^{i+1}$ coincides with $\phi|_{\Si^{\leq i}\cup \calA^{i+1}}$ up to isotopy fixing $\calC''^{i+1}$.}     
Then, it induces a mapping in $\Si^{\leq i+1}$ that coincides with $\phi|_{\Si^{\leq i+1}}$ up to isotopy fixing the boundary.

Now, we come back to the graph $(\G^{\leq i+1},B^{\leq i+1})$. Assume we know $\phi|_{\Si^{\leq i}}$ up to isotopy fixing the action at the boundary $B^{\leq i}$. We extend trivially $\phi|_{\Si^{\leq i}}$ using parametrizations of the collars of $\Si^{\leq i+1}_{\Lambda^{i+1}}\setminus\Si^{\leq i}$ for which the segments of $\G^{\leq i+1}$ inside are retraction lines of the parametrizations (use \Cref{re:g2}). We call it $\widetilde{\phi}_i$.  It is a homeomorphism of $\Si^{\leq i+1}|_{\Lambda^{i+1}}$. 
Let $\ell_{j}^{i+1}$ be the length of $\tilde{\Lambda}^{i+1}_{j,k}$ for any $k$, that is, it coincides with the original $length(\calC''^{i+1}_{j,1})$. Define 

\begin{equation}\label{eq:delta}\delta^{i+1}_j:=-s_j/\alpha_j \cdot \ell_{j}^{i+1}
\end{equation}

with $s_j$ the screw number of $\phi$ on $\calA^{i+1}_{j,k}$.
Let $\calD_{\delta_{i+1}}$ be the composition of the {boundary} Dehn twists of length {$\delta^{i+1}_j$}  along each $\tilde{\Lambda}^{i+1}_{j,k}$ (see \Cref{def:boundary_dehn}). 
Then, it is clear, by the previous observations about $\phi$, that $[\widetilde{\phi}_{i}]_{\partial,\widetilde{\phi}_{i}|_\partial}$ is compatible with the gluing $g_{i+1}$ and that the homeomorphism it induces in $\Si^{\leq i+1}$ coincides with $\phi|_{\Si^{\leq i+1}}$ up to isotopy fixing the {\em action at the} boundary.

Below we see the diagram which shows how $[\phi]_{\partial^1}$ is obtained from the metric graphs $(\G^{\leq i},B^{\leq i})$ and {the numbers $\delta^i_j$}.

\begin{equation}
\begin{tikzcd}\label{diag:mixed_tat}
%\Si^{\leq 0}_{\Lambda^{0}} \arrow[rightarrow]{r}{ %\phi_{\Lambda^0}} \arrow{d}{g_0} & \Si^{\leq 0}%_{\Lambda^{0}} \arrow[rightarrow]{r}{%\calD_{\delta_0}} & \Si^{\leq 0}_{\Lambda^{0}}  %%\arrow{d}{g_0} & & & &   &\\
\Si^{\leq 1}_{\Lambda^{1}} \arrow{d}{g_1} \arrow[rightarrow]{rr}{\widetilde{\phi}_{0}}  &  & \Si^{\leq 1}_{\Lambda^{1}} \arrow[rightarrow]{r}{\calD_{\delta_1}} &   \Si^{\leq 1}_{\Lambda^{1}} \arrow{d}{g_1}  &  &  & &\\
\Si^{\leq 2}_{\Lambda^{2}} \arrow{d}{g_2} \arrow{rrr}{ \widetilde{ \phi}_{1}}&  &  & \Si^{\leq 2}_{\Lambda^{2}}\arrow{r}{\calD_{\delta_2}} & \Si^{\leq 2}_{\Lambda^{2}}  \arrow{d}{g_2} &   &&&\\
\vdots & \vdots  & \vdots &  \vdots & \vdots &  &    & \\
\Si^{\leq d}_{\Lambda^{d}} \arrow{d}{g_d} \arrow{rrrrrr}{ \widetilde{ \phi}_{d-1}}& &&&&&\Si^{\leq d}_{\Lambda^{d}}  \arrow{r}{\calD_{\delta_d}}&\Si^{\leq d}_{\Lambda^{d}}  \arrow{d}{g_d}\\
\Si \arrow{rrrrrrr}{\phi=\widetilde{ \phi}_{d}} &&&&&&& \Si\\
\end{tikzcd}\end{equation}

We have obtained the following: 
\begin{proposition}
\label{prop:tat00} 
The {collection} of metric spines $(\G^{\leq i},B^{\leq i})$ for $i=0,...,d$ together with the numbers $\delta^i_j$ obtained in (\ref{eq:delta}) determine $[\phi]_{\partial\Si,\phi|_{\partial\Si}}$.
\end{proposition}

\textbf{Filtration on $\G^{d}$}
We can codify the information of the {collection} the metric relative spines $(\G^{\leq i},B^{\leq i})$ by a filtration on the last spine $(\G^{\leq d},B^{\leq d})$ as follows. Define 
  $$\G^i:=\bigcup_{\ell\geq i} (\Lambda^\ell \setminus \calC'^{\ell+1}) \cup \bigcup_{\ell \geq i\\ j,k} \eta_{j,k}^{\ell+1}(I\times \{x\})\cup \bigcup_{\ell \geq i\\  j,k} r_{j,k}^{\ell+1}(I\times \{x\}),$$
  
  $$A^{i}:= B^{\leq d}\cap \G^i$$
 where $x$ runs exactly as in the construction in the steps (c) and (d). 
 
In this way we obtain a filtered relative spine
$$(\G^{\leq d},B^{\leq d})=(\G^0,A^0)\supset (\G^1,A^1)\supset ...\supset (\G^d,A^d).$$

\begin{proposition}
\label{prop:tat0} 
The metric filtered graph  defined above, together with the numbers $\delta^i_j$ in (\ref{eq:delta}), determine $[\phi]_{\partial\Si,\phi|_{\partial\Si}}$.
\end{proposition}
\begin{proof}
This follows from \Cref{prop:tat00} since $\G^{0}_{\G^{i+1}}$ is isometric to $\G^{\leq i}$. 
\end{proof}

The properties of this filtered metric relative spine and the numbers $\delta_i$, which can be summarized in the diagram \ref{diag:mixed_tat}, can be restated in terms of mixed \tat graphs that we introduce in the next section.

\subsection{Mixed \tat graphs.}\label{sec:mixed}

%With pure, relative and general \tat graphs we model periodic homeomorphisms. Now we extend the notion of \tat graph to be able to model some pseudo-periodic homeomorphisms.
Now we introduce the definition of mixed \tat graphs and twists, inspired by the constructions of the previous section.

Let $(\G^\bullet, A^{\bullet})$ be a decreasing filtration on a connected relative metric ribbon graph $(\G, A)$. That is
$$(\G,A)=(\G^0, A^0) \supset (\G^1, A^1) \supset \cdots \supset (\G^d, A^d)$$
where $\supset$ between pairs means $\G^i \supset \G^{i+1}$ and $A^i \supset A^{i+1}$, and where $(\G^i, A^i)$ is a (possibly disconnected) relative metric ribbon graph for each $i=0,\ldots, d$. We say that $d$ is the depth of the filtration $\G^\bullet$. We assume each $\G^i$ does not have univalent vertices and is a subgraph of $\G$ in the usual terminology in Graph Theory. We observe that since each $(\G^i, A^i)$ is a relative metric ribbon graph, we have that $A^i \setminus A^{i+1}$ is a disjoint union of connected components homeomorphic to $\MS^1$.

For each $i=0, \ldots, d$, let $$\delta_i: \G^{i} \rightarrow \R_{\geq 0}$$ be a locally constant map (so it is a map constant on each connected component). We put the restriction that $\delta_0(\G^0)>0$. We denote the collection of all these maps by $\delta_\bullet$.

Given $p \in \G$, we define $c_p$ as the largest natural number such that $p\in \G^{c_p}$.

\begin{definition}[Mixed safe walk]\label{def:mixed_tat} Let $(\G^\bullet, A^{\bullet})$ be a filtered relative metric ribbon graph. 
Let $p \in \G\setminus A \setminus v(\G)$. We define a mixed safe walk $\gamma_p$ starting at $p$ as  a concatenation of paths defined iteratively by the following properties
\begin{itemize}
\item[i)] $\gamma_p^0$ is a safe walk of length $\delta_0(p)$ starting at $p^\gamma_0:=p$. Let $p^\gamma_1:=\gamma^0(\delta_0)$ be its endpoint.
\item[ii)] Suppose that $\gamma_p^{i-1}$ is defined and let $p^\gamma_i$ be its endpoint. 
\begin{itemize}
\item If $i > c_p$ or $p^\gamma_{i} \notin \G^{i}$ we stop the algorithm. 
\item If $i\leq c_p$ and $p^\gamma_{i} \in \G^{i}$ then define $\gamma_p^{i}:[0,\delta_i(p_i)] \rightarrow \G^i$ to be a safe walk of length $\delta_i(p^\gamma_i)$ starting at $p^\gamma_i$ and going in the same direction as $\gamma_p^{i-1}$.
\end{itemize} 
\item[iii)] Repeat step $ii)$ until algorithm stops.
\end{itemize}

Finally, define $\gamma_p:=\gamma_p^k \star \cdots \star \gamma_p^0$, that is, the mixed safe walk starting at $p$ is the concatenation of all the safe walks defined in the inductive process above. 
\end{definition}

As in the pure case, there are two safe walks starting at each point on $\G \setminus (A \cup v(\G))$. We denote them by $\gamma_p$ and $\omega_p$.

\begin{definition}[Boundary mixed safe walk]\label{def:boundary_mixed_tat}
Let $(\G^\bullet, A^{\bullet})$ be a filtered relative metric ribbon graph and let $p \in A$. We define a boundary mixed safe walk $b_p$ starting at $p$ as  a concatenation of a collection of paths defined iteratively by the following properties
\begin{itemize}
\item[i)] $b_{p_0}^0$ is a boundary safe walk of length $\delta_0(p)$ starting at $p_0:=p$ and going in the direction indicated by $A$ (as in the relative \tat case). Let $p_1:=b_p^0(\delta_0)$ be its endpoint.
\item[ii)] Suppose that $b_{p_{i-1}}^{i-1}$ is defined and let $p_i$ be its endpoint. 
\begin{itemize}
\item If $i >c_p$ or $p_{i} \notin \G^{i}$ we stop the algorithm. 
\item If $i\leq c(p)$ and $p_{i} \in \G^{i}$ then define $b_{p_i}^{i}:[0,\delta_i(p_i)] \rightarrow \G^i$ to be a safe walk of length $\delta_i(p_i)$ starting at $p_i$ and going in the same direction as $b_{p_{i-1}}^{i-1}$.
\end{itemize} 
\item[iii)] Repeat step $ii)$ until algorithm stops.
\end{itemize}

Finally, define $b_p:=b_{p_k}^k \star \cdots \star b_{p_0}^0$, that is, the boundary mixed safe walk starting at $p$ is the concatenation of all the safe walks defined in the inductive process. 
\end{definition}

\begin{notation}
We call the number $k$ in \Cref{def:mixed_tat} (resp. \Cref{def:boundary_mixed_tat}), the \textit{order} of the mixed safe walk (resp. boundary mixed safe walk) and denote it by $o(\gamma_p)$ (resp. $o(b_p)$). 

We denote by  $l(\gamma_p)$  the \textit{length} of the mixed safe walk $\gamma_p$ which is the sum $\sum_{j=0}^{o(\gamma_p)} \delta_j(p^\gamma_j)$ of the lengths of all the walks involved. We consider the analogous definition $l(b_p)$.

As in the pure case, two mixed safe walks starting at $p \in \G \setminus v(\G)$ exist. We denote by $\omega_p$ the mixed safe walk that starts at $p$ but in the opposite direction to the starting direction of $\gamma_p$.

Observe that since the safe walk $b_{p_0}^0$ is completely determined by $p$, for a point in $A$ there exists only one boundary safe walk.
\end{notation}

Now we define the relative mixed \tat property.

\begin{definition}[Relative mixed \tat property] \label{def:relative_mix_sim}
Let $(\G^\bullet, A^{\bullet})$ be a filtered relative metric ribbon graph and let $\delta_\bullet$ be a set of locally constant mappings $\delta_k: \G^k \rightarrow \R_{\geq 0}$. We say that  $(\G^{\bullet}, A^\bullet, \delta_\bullet)$ satisfies the relative mixed \tat property or that it is a relative mixed \tat graph if for every $p \in \G-(v(\G) \cup A)$

\begin{itemize}
\item[I)] The endpoints of $\gamma_p$ and $\omega_p$ coincide.
\item[II)] $c_{\gamma_p(l({\gamma_p}))}=c_p$
\end{itemize}
and for every $p \in A$, we have that  
\begin{itemize}
\item[III)] $b_p(l({b_p})) \in A^{c_p}$
\end{itemize} 
\end{definition}

As a consequence of the two previous definitions we have:
\begin{lemma} \label{lem:mix_prop}
Let $(\G^{\bullet}, A^\bullet,\delta_\bullet)$ be a mixed relative \tat graph, then

\begin{itemize}
\item[a)] $o(\omega_p)=o(\gamma_p)= c_p$
\item[b)]$l(\gamma_p) = l(\omega_p)$ for every $p \in \G \backslash v(\G)$.
\end{itemize}
\end{lemma}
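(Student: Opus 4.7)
The plan is to derive both statements almost immediately from the mixed \tat properties (I) and (II) together with the stopping rules in the algorithm defining the mixed safe walk.

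For part (a), observe first that $o(\gamma_p)\leq c_p$ follows automatically, since for $i\geq 1$ the walk $\gamma_p^i$ is defined only when $i\leq c_p$. For the reverse inequality I would argue by contradiction: if $o(\gamma_p)=k<c_p$, then the algorithm halted at step $k+1\leq c_p$, so the halt must be due to $p^\gamma_{k+1}\notin\G^{k+1}$, giving $c_{p^\gamma_{k+1}}\leq k<c_p$. But $p^\gamma_{k+1}=\gamma_p(l(\gamma_p))$, contradicting property (II). The same argument handles $\omega_p$ once one uses (I) to identify its endpoint with that of $\gamma_p$, from which (II) supplies $c_{\omega_p(l(\omega_p))}=c_p$.

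For part (b), by (a) both $\gamma_p$ and $\omega_p$ have order $c_p$, so
\[
l(\gamma_p)=\sum_{i=0}^{c_p}\delta_i(p^\gamma_i),\qquad l(\omega_p)=\sum_{i=0}^{c_p}\delta_i(p^\omega_i).
\]
Since each $\delta_i$ is locally constant on $\G^i$, it suffices to show, level by level, that $p^\gamma_i$ and $p^\omega_i$ lie in the same connected component of $\G^i$. The key observation is that by (I) the two walks share a common endpoint $q=p^\gamma_{c_p+1}=p^\omega_{c_p+1}$, while $\gamma_p^{c_p}$ and $\omega_p^{c_p}$ are safe walks inside $\G^{c_p}$ ending at $q$. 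Hence $p^\gamma_{c_p}$ and $p^\omega_{c_p}$ lie in the same component of $\G^{c_p}$. Since inclusions $\G^{c_p}\subseteq\G^i$ can only merge components, they remain in the same component of $\G^i$ for every $i\leq c_p$. Finally, for each $i\leq c_p$, the concatenation $\gamma_p^i\star\gamma_p^{i+1}\star\cdots\star\gamma_p^{c_p-1}$ is a path inside $\G^i$ connecting $p^\gamma_i$ to $p^\gamma_{c_p}$ (each $\gamma_p^j$ lives in $\G^j\subseteq\G^i$), and analogously for $\omega_p$. Combining these, $p^\gamma_i$ and $p^\omega_i$ lie in the same component of $\G^i$, whence $\delta_i(p^\gamma_i)=\delta_i(p^\omega_i)$ and the two sums agree term by term.

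The only mildly subtle point is the invocation of (II) for $\omega_p$, which is legitimate only after identifying its endpoint with that of $\gamma_p$ via (I); the rest is bookkeeping with the filtration.
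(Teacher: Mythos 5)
Your proposal is correct and follows essentially the same route as the paper: part (a) is the same contradiction argument using the stopping rule together with property (II), invoking (I) to transfer the conclusion to $\omega_p$, and part (b) rests on the same observation that the terminal sub-walks $\gamma_p^{c_p}$ and $\omega_p^{c_p}$ share the endpoint $q$, so $p^\gamma_i$ and $p^\omega_i$ lie in a common component of $\G^i$ and the locally constant $\delta_i$ agree term by term. The only cosmetic difference is that you connect $p^\gamma_i$ to $p^\gamma_{c_p}$ directly by concatenating the deeper sub-walks inside $\G^i$, whereas the paper phrases the same connectivity bookkeeping as a downward induction on the filtration level.
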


\begin{proof}
$a)$ By \Cref{def:mixed_tat} ii), we have that $o(\gamma_p) \leq c_p$ for all $p \in \G\setminus v(\G)$. Suppose that for some $p$, we have that $o(\gamma_p) = k < c_p$. This means, that while constructing the mixed safe walk we stopped after constructing the path $\gamma_p^k$ either because $k> c_p$ which contradicts the supposition, or because the endpoint $p_k^\gamma$ of $\gamma_p^k$ is not in $\G^{k}$ which contradicts that $c_p = c_{\gamma_p(l(\gamma_p))}$. This proves the equality $o(\gamma_p)=c_p$. In order to prove $o(\omega_p)=c_p$ use the equality $\gamma_p(l(\gamma_p))=\omega_p(l(\omega_p))$ and repeat the same argument.

$b)$ Let $q$ be the endpoint of $\gamma_p$ and $\omega_p$. Since the image of the safe walks $\gamma_p^{c_p}$ and $\omega_p^{c_p}$ lies on the same connected component of $\G^{c_p}$ we have that their starting points $p^\gamma_{c_p}$ and $p^{\omega}_{c_p}$ also lie on that same connected component. Therefore $\delta_{c_p}(p^\gamma_{c_p}) = \delta_{c_p}(p^\omega_{c_p})$. 

Suppose now that $p^\gamma_i$ and $p^\omega_i$ lie on the same connected component of $\G^i$ (and so $\delta_i(p^\gamma_i) = \delta_i(p^{\omega}_i)$). Then the image of the safe walks $\gamma_p^{i-1}$ and $\omega_p^{i-1}$ lies on the same connected component of $\G^{i-1}$ and we have that their starting points $p^\gamma_{i-1}$ and $p^{\omega}_{i-1}$ also lie on that same connected component. So $\delta_{i-1}(p^\gamma_{i-1}) = \delta_{i-1}(p^{\omega}_{i-1})$. 

We conclude that $\delta_j(p^\gamma_j) = \delta_j(p^{\omega}_j)$ for all $j=0, \ldots, d$ which concludes the proof.
\end{proof}
\begin{remark} Note that for mixed \tat graphs it is not true that $p\mapsto \gamma_p(\delta(p))$ gives a continuous mapping from $\G$ to $\G$. %We need to compose carefully with Dehn twists along $\widetilde{\G}^{i}$ in order to construct a homeomorphism.  
\end{remark}

The reader can check that the filtered graph associated with the pseudo-periodic homeomorphism $\phi$ in the previous section is a  mixed relative \tat graph. 
\subsection{Mixed \tat twists.} \label{sec:mixed_homeo}
Let $(\G^{\bullet}, A^\bullet,\delta_\bullet)$  be a a mixed \tat graph  and let 
$(\Sigma,\G,A)$ be a thickening surface of $(\G,A)$.  Let $\partial^1 \Si$ be the union of the boundary components of $\Si$ not contained in $A$. In this section we define a pseudo-periodic homeomorphism $\phi_{(\Sigma,\G^{\bullet},\delta)}$ of $\Si$ associated to $(\G^{\bullet}, A^\bullet,\delta_\bullet)$. This homeomorphism is well defined up to isotopy fixing $\partial^1 \Si$ and relative to the action on $A$.

For the sake of simplicity in notation we assume that $A^\bullet=\emptyset$ during the construction. The general case is analogous.

\begin{notation}

Let $g_{\G^i}:\Si_{\G^i}\to \Si$ be the gluing map as in \Cref{not:cut}. Let $\G_{\G^i}$ be the preimage of $\G$ by $g_{\G^i}$. We also denote  by $g_{\G^i}$ its restriction $g_{\G^i}:\G_{\G^i}\to \G$. 
The union of the boundary components of $\Si_{\G^i}$ that come from $\G^{i}$ is denoted by $\widetilde\G^{i}$. Observe that a single connected component of $\G^i$ might produce more than one boundary component in $\Si_{\G^i}$.

It's clear that $g_{\G^i}$ factorizes as follows: 
$$\Si_{\G^i}\to \Si_{\G^{i+1}}\to...\to \Si.$$
We denote these mappings by $g_j:\Si_{\G^j}\to \Si_{\G^{j+1}}$ for $j=0,...,d-1$ and also their restrictions $g_j:\G_{\G^j}\to \G_{\G^{j+1}}$.  

\end{notation}

\begin{remark}\label{rem:relative_tat_f}
Observe that by \Cref{def:relative_mix_sim}, each connected component of the relative metric ribbon graph $(\G_{\G^{1}},\TG^1)\hookrightarrow \Si_{\G^1}$ has the relative \tat property for safe walks of length $\delta_0(\G)$.
\end{remark}

Let $\phi_{\G,0}: \Si_{\G^1} \rightarrow \Si_{\G^1}$ be the induced \tat twist fixing each boundary component that is not in $\TG^1$ as in  \Cref{def:phi_Gtau}. It is the homeomorphism induced by the relative \tat property of each connected component of $(\G_{\G^1}, \TG^1)$ for some choice of product structures on $(\Si_{\G^1})_{\G_{\G^1}}$. %Recall the construction in \Cref{def:phi_Gtau}.

Also according to \Cref{def:phi_Gtau}, observe that since we do not specify anything, we assume that the sign $\iota$ is constant $+1$.

Now we continue to define inductively the homeomorphism $\phi_{(\Sigma,\G^{\bullet},\delta)}$. 

\begin{notation}\label{not:boundary_dehn}
Let $$\calD_{\delta_i}: \Si_{\G^i} \rightarrow \Si_{\G^i}$$ be the homeomorphism consisting of the composition of all the boundary Dehn twists $\calD_{\delta_i(g_{\G^i}(C))}(C)$ for all components in $\TG^i$. Recall \Cref{def:boundary_dehn}.
\end{notation}

\begin{lemma}
The homeomorphism $$\widetilde{\phi}_{\G,1}:=\calD_{\delta_1} \circ \phi_{\G,0}:\Si_{\G^1} \rightarrow \Si_{\G^1}$$
is compatible with the gluing $g_1:\Si_{\G^1}\to \Si_{\G^2}$.
\end{lemma}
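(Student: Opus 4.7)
The strategy is to reduce the compatibility check to the restriction of $\widetilde{\phi}_{\G,1}$ to $\widetilde{\G}^1$, and then to derive that compatibility from the mixed \tat conditions (I) and (II) of \Cref{def:relative_mix_sim} applied to the two-step safe walks issued from points of $\G^1\setminus \G^2$.

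First I would observe that $g_1$ is a local homeomorphism on $\Si_{\G^1}\setminus \widetilde{\G}^1$, and that both $\phi_{\G,0}$ (as a relative \tat twist) and $\calD_{\delta_1}$ (by \Cref{def:boundary_dehn}, since a boundary Dehn twist preserves its boundary component) send $\widetilde{\G}^1$ into itself. Hence descending $\widetilde{\phi}_{\G,1}$ through $g_1$ amounts to checking that $g_1(\widetilde{\phi}_{\G,1}(q_1))=g_1(\widetilde{\phi}_{\G,1}(q_2))$ for every pair $q_1,q_2\in \widetilde{\G}^1$ with $g_1(q_1)=g_1(q_2)$. Such identifications occur exactly when the common image $p:=g_{\G^1}(q_1)=g_{\G^1}(q_2)$ lies in $\G^1\setminus \G^2$, together with the corresponding vertex-identifications at $\G^1\cap \G^2$.

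Next I would analyse $\widetilde{\phi}_{\G,1}$ on each circle component $\widetilde{\G}^1_j$ of $\widetilde{\G}^1$. Since $\sigma_{\G_{\G^1}}$ is an isometry by \Cref{cor:sigma}, the restriction $\phi_{\G,0}|_{\widetilde{\G}^1_j}$ is a rigid rotation; likewise $\calD_{\delta_1}|_{\widetilde{\G}^1_j}$ is a rigid rotation of amount $\delta_1(g_{\G^1}(\widetilde{\G}^1_j))$, which is well defined because $\delta_1$ is constant on each component of $\G^1$. Their composition $\widetilde{\phi}_{\G,1}|_{\widetilde{\G}^1_j}$ is therefore an isometric rotation of this circle. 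Projecting via $g_{\G^1}$, the first factor sends $q_1$ to the preimage of the endpoint of the $\delta_0$-safe walk in $\G$ issued from $p$ in the direction determined by the cylinder side of $q_1$; the second factor, being motion along $\widetilde{\G}^1_j$ at unit speed by $\delta_1$, corresponds at each vertex to taking the next $\G^1$-edge in cyclic order, that is, to a $\delta_1$-safe walk in $\G^1$. Hence $g_{\G^1}(\widetilde{\phi}_{\G,1}(q_1))$ is precisely the endpoint of the concatenation $\gamma_p^1\star\gamma_p^0$ of \Cref{def:mixed_tat} starting in the direction of $q_1$, and similarly for $q_2$ in the opposite direction.

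Finally, in the generic case $p\in \G^1\setminus \G^2$ we have $c_p=1$, so these two concatenations are exactly the full mixed safe walks $\gamma_p$ and $\omega_p$. Condition (I) then yields a common endpoint and condition (II) places it again in $\G^1\setminus\G^2$; therefore $g_{\G^1}(\widetilde{\phi}_{\G,1}(q_1))=g_{\G^1}(\widetilde{\phi}_{\G,1}(q_2))\in\G^1\setminus\G^2$, and by the definition of $g_1$ this gives $g_1(\widetilde{\phi}_{\G,1}(q_1))=g_1(\widetilde{\phi}_{\G,1}(q_2))$. The remaining vertex identifications follow by continuity of the rigid rotation $\widetilde{\phi}_{\G,1}|_{\widetilde{\G}^1_j}$ together with closedness of the equivalence relation determined by $g_1$. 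I expect the main obstacle to be a careful match of orientation conventions: the \tat rotation is ``negative'' in the sense of \Cref{lem:sigma}, while the boundary Dehn twist rotates in the positive boundary orientation of \Cref{def:boundary_dehn}; one must verify that these combine to reproduce the concatenation $\gamma_p^1\star\gamma_p^0$ with matched directions rather than a partial cancellation.
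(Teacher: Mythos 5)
Your proof is correct and takes essentially the same route as the paper's: reduce compatibility to pairs of points of $\widetilde{\G}^1$ lying over a point $p$ with $c_p=1$, identify the effect of $\calD_{\delta_1}\circ\phi_{\G,0}$ on such points (projected to $\G$) with the endpoints of the two mixed safe walks $\gamma_p$ and $\omega_p$, and conclude from condition (I), invoking condition (II) exactly as you do so that the common endpoint avoids $\G^2$ and the $g_1$-identification applies, with vertices handled by continuity. The orientation-matching point you flag at the end is simply asserted ("it is clear that\ldots") in the paper's proof, and your phrase ``rigid rotation of this circle'' should allow for $\phi_{\G,0}$ permuting the components of $\widetilde{\G}^1$; neither remark affects the validity of the argument.
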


\begin{proof}
We use the notation introduced in \Cref{def:mixed_tat}.

 Since $g_1$ only identifies points in $\widetilde{\G}^1$, we  must show that if $x$, $y$ are different points in $\widetilde\G^{1}$ such that $g_1(x)=g_1(y)\in \G^1$, then $g_1(\widetilde{\phi}_{\G,1}(x))=g_1(\widetilde{\phi}_{\G,1}(y))$. 

So let $x,y \in \TG^1$ be such that $g_1(x)=g_1(y) = p$. In particular, $c_p=1$ and we have that  $\gamma_p=\gamma_p^1 \star \gamma^0_{p}$ and $\omega_p=\omega_p^1 \star \omega_{p}^0$ by \Cref{lem:mix_prop} $a)$. So the mixed safe walks end in a connected component of $\G^1$. Denote by $\hat{p}$ their endpoint. By \Cref{def:relative_mix_sim} $ii)$ we have that $c_{\hat{p}} = 1$.

First observe that $\phi_{\G,0}(x)=b_x(\delta_0(p))$ where $b_x:[0,\delta_0(p)] \rightarrow \G_{\G^1}$ is the  boundary safe walk of length $\delta_0(p)$ given by the relative \tat structure on $(\G_{\G^1}, \TG^1)$. Analogously $\phi_{\G,0}(y)=b_y(\delta_0(y))$. So we have $$g_1(\phi_{\G,0}(x)) = \gamma_{p}(\delta_0(p))$$ and $$g_1(\phi_{\G,0}(y)) = \omega_{p}(\delta_0(p))$$. 

It is clear that $(\calD_{\delta_1}(\phi_{\G,0}(x))) = \widetilde{\gamma}^0_p(\delta_1(\phi_{\G,0}(x)) + \delta_0(p))$
 with $\widetilde{\gamma}_p$ the safe walk in $\widetilde{\G}^1$. It is also clear that $\widetilde{\gamma}_p$ is the actual lifting of $\gamma_p$ along $\G$. Then,  
$$g_1(\calD_{\delta_1}\circ\phi_{\G,0}(x)) = \gamma_p(\delta_1(\phi_{\G,0}(x)) + \delta_0(p)) =  \gamma_p(\delta_1(\gamma^0_p(\delta_0(p))) + \delta_0(p))= \gamma_p(l(\gamma_p))$$
and analogously $g_1(\calD_{\delta_1} \circ\phi_{\G,0}(y)) = \omega_p(l(\omega_p))$.
%Using \Cref{rem:property}, we have that $$g_1(\calD_{\delta_1}(\phi_{\G,0}(x))) = \gamma_p^1 \star \gamma_p^0(\delta_1(\phi_{\G,0}(x)) + \delta_0(x)) = \gamma_p(l(\gamma_p))$$ and equivalently $g_1(\calD_{\delta_1} \left(\phi_{\G,0}(y))\right ) = \omega_p(l(\omega_p))$. 

By property $i)$ of a mixed \tat graph, we can conclude. %that the map $\phi_{\G,1}:\G_{\G^2} \rightarrow \G_{\G^2}$ is well defined.
\end{proof}
Now, we consider the homeomorphism induced by $\widetilde{\phi}_{\G,1}$ and we denote it by $$\phi_{\G,1}: \Si_{\G^{2}} \rightarrow \Si_{\G^{2}}.$$

The same argument applies inductively to prove that each map
\begin{equation}\label{eq:notation_mixedpartial}
\widetilde{\phi}_{\G,i}:=\calD_{\delta_i} \circ \phi_{\G,i-1}: \Si_{\G^i} \rightarrow \Si_{\G^i}
\end{equation}
is compatible with the gluing $g_i$ and hence it induces a homeomorphism \begin{equation}\label{eq:notation_mixedpartial22}\phi_{\G,i}: \Si_{\G^{i+1}} \rightarrow \Si_{\G^{i+1}}.\end{equation}

In the end we get a map 
\begin{equation}\label{eq:mix_phi_G}\phi_{(\Sigma,\G^{\bullet},\delta)}:=\phi_{\G,d}: \Si \rightarrow \Si\end{equation}
 which we call the \emph{mixed \tat twist} induced by $(\G^{\bullet}, \delta_\bullet)$ and the chosen embedding $\G\hookrightarrow \Sigma$.

\begin{notation}\label{not:ext_mixed_tat}
We can extend the notation introduced before by defining  $\phi_{\G, -1}:= id$ and $\widetilde{\phi}_{\G,0}:= \calD_{\delta_0} \circ \phi_{\G,-1} = \calD_{\delta_0}$. %When $(\G^{\bullet}, \delta_{\bullet})$ is a mixed \tat graph, then $(\G_{\G^1}, \widetilde{\G}^1)$ is a relative \tat graph for safe walks of length $\delta_0$, which is equivalent to say that $\widetilde{\phi}_{\G,0}$ 
Then we can restate \Cref{rem:relative_tat_f} by saying that $\widetilde{\phi}_{\G,0}$ is compatible with the gluing $g_0$ and induces the homeomorphism $\phi_{\G,0}$. % is the homeomorphism induced. 
\end{notation}

\begin{remark}\label{rem:equiv_mixed_tat}
%We can extend the notation introduced before defining  $\phi_{\G, -1}:= id$ and $\widetilde{\phi}_{\G,0}:= \calD_{\delta_0} \circ \phi_{\G,-1} = \calD_{\delta_0}$. %When $(\G^{\bullet}, \delta_{\bullet})$ is a mixed \tat graph, then $(\G_{\G^1}, \widetilde{\G}^1)$ is a relative \tat graph for safe walks of length $\delta_0$, which is equivalent to say that $\widetilde{\phi}_{\G,0}$ 
%Then $\widetilde{\phi}_{\G,0}$ is compatible with the gluing $g_0$ and induces the homeomorphism $\phi_{\G,0}$. % is the homeomorphism induced. 

After the description of the construction of the mixed \tat twist above and the diagram \ref{diag:mixed_tat} and \ref{diag:mixed_tat_2}, we observe that satisfying $I)$ and $II)$ of the mixed \tat property in  \Cref{def:relative_mix_sim} is equivalent to satisfying:
\begin{itemize}
\item[I')] For all $i=0, \dots, d-1$, the homeomorphism $\widetilde \phi_{\G,i}=\calD_{\delta_i}\circ\phi_{\G,i-1}$ is compatible with the gluing $g_i$, that is, $$g_i(x) =g_i(y) \Rightarrow g_i(\widetilde{\phi}_{\G,i}(x)) = g_i(\widetilde{\phi}_{\G,i}(y)).$$

Below we see the diagram which shows the construction of $\phi_\G$.

\begin{equation}
\begin{tikzcd}\label{diag:mixed_tat_2}
\Si_{\G^{0}} \arrow[rightarrow]{r}{ \phi_{\G,-1}} \arrow{d}{g_0} & \Si_{\G^{0}} \arrow[rightarrow]{r}{\calD_{\delta_0}} & \Si_{\G^{0}}  \arrow{d}{g_0} & & & &   &\\
\Si_{\G^{1}} \arrow{d}{g_1} \arrow[rightarrow]{rr}{ \phi_{\G,0}}  &  & \Si_{\G^{1}} \arrow[rightarrow]{r}{\calD_{\delta_1}} &   \Si_{\G^{1}} \arrow{d}{g_1}  &  &  & &\\
 \Si_{\G^2} \arrow{d}{g_2} \arrow{rrr}{ {\phi}_{\G,1}}&  &  &  \Si_{\G^{2}} \arrow{r}{\calD_{\delta_2}} & \Si_{\G^{2}}  \arrow{d}{g_2} &   &&&\\
\vdots & \vdots  & \vdots &  \vdots & \vdots &  &    & \\
\Si_{\G^d} \arrow{d}{g_d} \arrow{rrrrrr}{ {\phi}_{\G,d-1}}& &&&&&\Si_{\G^d} \arrow{r}{\calD_{\delta_d}}& \Si_{\G^d} \arrow{d}{g_d}\\
\Si \arrow{rrrrrrr}{\phi_{\G}=\phi_{\G,d}} &&&&&&& \Si\\
\end{tikzcd}\end{equation}
\end{itemize}

\end{remark}

We prove the following:
\begin{theorem}\label{theo:tat_qp}
The homeomorphism $\phi_{\G,i}$ is
pseudo-periodic for all $i=0,...,d$. In particular, $\phi_\G$ is pseudo-periodic.
\end{theorem}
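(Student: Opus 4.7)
The plan is an induction on $i$, with the ``in particular'' conclusion for $\phi_\G=\phi_{\G,d}$ following from the case $i=d$.

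For the base case $i=0$, by \Cref{rem:relative_tat_f} each connected component of $(\G_{\G^1},\widetilde\G^1)$ is a relative \tat graph of walk-length $\delta_0$, so $\phi_{\G,0}$ on $\Sigma_{\G^1}$ is a disjoint union of relative \tat twists in the sense of \Cref{def:phi_Gtau}. By \Cref{thm:rel_signed} each such twist is boundary-free isotopic to a periodic homeomorphism, so $\phi_{\G,0}$ is pseudo-periodic with empty cut system on each component.

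For the inductive step, assume $\phi_{\G,i-1}:\Sigma_{\G^i}\to\Sigma_{\G^i}$ is pseudo-periodic with some cut system $\calC_{i-1}$. I would first observe that the passage $\phi_{\G,i-1}\mapsto \widetilde\phi_{\G,i}=\calD_{\delta_i}\circ\phi_{\G,i-1}$ preserves pseudo-periodicity: each boundary Dehn twist $\calD_{\delta_i}(C)$ is supported in a collar of the boundary component $C\subset\widetilde\G^i$ (see \Cref{def:boundary_dehn}) and is freely isotopic to the identity there by rotating $C$. Hence $\widetilde\phi_{\G,i}$ is pseudo-periodic with the same cut system $\calC_{i-1}$. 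Next I would analyse the gluing $g_i:\Sigma_{\G^i}\to\Sigma_{\G^{i+1}}$: the boundary components of $\Sigma_{\G^i}$ lying over $\G^i\setminus\G^{i+1}$ are identified pairwise (or cyclically) into a disjoint collection $\calB_i$ of simple closed curves in the interior of $\Sigma_{\G^{i+1}}$, while those lying over $\G^{i+1}$ remain boundary. The natural candidate cut system for $\phi_{\G,i}$ is
\[
\calC_i:=g_i(\calC_{i-1})\cup\calB_i.
\]
It is $\phi_{\G,i}$-invariant because $\widetilde\phi_{\G,i}$ permutes $\calC_{i-1}$ by the inductive hypothesis and permutes the relevant components of $\widetilde\G^i$ by the \tat structure (cf.\ \Cref{cor:sigma}, parts (2)--(3), applied at the appropriate depth), both invariances surviving $g_i$ by equivariance. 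The restriction of $\phi_{\G,i}$ to the complement of small annular neighbourhoods of $\calB_i$ then coincides with a restriction of $\widetilde\phi_{\G,i}$, which is pseudo-periodic by induction; after further cutting along $g_i(\calC_{i-1})$ one obtains a boundary-free isotopic periodic representative on each piece.

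The main obstacle will be the bookkeeping of how the quasi-canonical forms of \Cref{re:stan} glue coherently through the successive $g_i$'s: ensuring that the annular collars of $\calB_i$ can be chosen disjoint from the collars of $g_i(\calC_{i-1})$, and that the periodic pieces on either side of a curve in $\calB_i$ match up along their common boundary. This should reduce to a local calculation in a neighbourhood of each curve in $\calB_i$, controlled by the explicit product structure used in \Cref{def:phi_Gtau} together with the identity $\widetilde\phi_{\G,i}=\calD_{\delta_i}\circ\phi_{\G,i-1}$ of equation \eqref{eq:notation_mixedpartial}.
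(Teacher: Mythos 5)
Your inductive skeleton (induct on $i$; boundary Dehn twists $\calD_{\delta_i}$ are supported in collars and freely trivial, so $\widetilde\phi_{\G,i}$ is pseudo-periodic whenever $\phi_{\G,i-1}$ is; away from the glued locus $\phi_{\G,i}$ agrees with $\widetilde\phi_{\G,i}$) matches the spirit of the paper's argument, but the decisive step rests on a wrong description of the gluing $g_i$, and this is a genuine gap rather than bookkeeping. The map $g_i\colon\Si_{\G^i}\to\Si_{\G^{i+1}}$ identifies the points of $\widetilde\G^i$ lying over $\G^i\setminus\G^{i+1}$, and the image of the identified locus is the corresponding subgraph of $\G$ sitting in $\Si_{\G^{i+1}}$: in general it has vertices of valency $\geq 3$ and is attached to $\widetilde\G^{i+1}$ (this already happens for the final gluing $g_d$, where the glued locus is all of $\G^d$, e.g.\ $K_{3,3}$ in \Cref{ex:non_regular2}, and it is forced by the constructions of \Cref{thm:mixed_model}, where higher-level subgraphs are joined to the lower levels by segments). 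So the boundary circles of $\Si_{\G^i}$ are not ``identified pairwise or cyclically into a disjoint collection $\calB_i$ of simple closed curves in the interior''; your $\calB_i$ is a graph, not a $1$-manifold, and the proposed $\calC_i=g_i(\calC_{i-1})\cup\calB_i$ is not a system of cut curves in the sense of \Cref{def:pseudo}. Consequently the announced ``local calculation near each curve of $\calB_i$'' has nothing to work on: near the glued locus there is branching, and no cut curve can pass through it.

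The repair is exactly what the paper does. Choose a $\phi_{\G,i-1}$-invariant collar $\calU^i$ of $\widetilde\G^i$, arranged (as in \Cref{re:g1}) to carry a product structure on which $\phi_{\G,i-1}$ is independent of the collar parameter, and take as cut curves the inner boundary curves $\calC^i_{j,k}$ of this collar: these are honest disjoint simple closed curves, invariant as a set under $\widetilde\phi_{\G,i}$, and untouched by $g_i$ since the gluing only affects $\widetilde\G^i$ itself. On the complement $\calB$ of the collar, $\widetilde\phi_{\G,i}$ coincides with $\phi_{\G,i-1}$ (the twists $\calD_{\delta_i}$ being supported in $\calU^i$), and the product structure makes $\phi_{\G,i}|_{g_i(\calB)}$ conjugate to $\phi_{\G,i-1}$, hence pseudo-periodic by induction. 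On the remaining piece $g_i(\calU^i)$, which is a regular neighbourhood of the graph $g_i(\widetilde\G^i)$, one cannot argue annulus by annulus; instead one observes that $g_i(\widetilde\G^i)$ is an invariant spine of this piece and invokes \Cref{lem:invariant} to conclude that the restriction is boundary-free isotopic to a periodic homeomorphism. This invariant-spine argument for the neighbourhood of the glued graph is the ingredient your proposal is missing. (A minor separate point: in the base case, the correct justification that $\phi_{\G,0}$ is freely periodic is that it preserves the spine $\G_{\G^1}$ and acts periodically on it, i.e.\ \Cref{lem:invariant}; \Cref{thm:rel_signed} is the converse realization statement and does not give this.)
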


\begin{proof}
The mapping $\phi_{\G,0}$ is periodic. 
Assume $\phi_{\G,i-1}$ is pseudo-periodic. Let's see so is $\phi_{\G,i}$.
Choose a collar neighbourhood $\calU^i$ of $\widetilde{\G}^i$, invariant by $\phi_{\G,i-1}$. We denote by $\calU^i_{j,1}$, ..., $\calU^i_{j,\alpha_j}$ any set of its connected annular components permuted by $\phi_{\G,i-1}$ such that $\phi_{\G,i-1}(\calU^i_{j,k})=\calU^i_{j,k+1}$.
Similarly to \Cref{re:g1} we can assume that, up to  isotopy fixing the action on the boundary, the homeomorphism  $\phi_{\G,i-1}$  satisfies that for some parametrizations $\eta^i_{j,k}:\MS^1 \times I\to \calU^i_{j,k}$ we have that $$p_2\circ (\eta^i_{j,k+1})^{-1}\circ\phi_{\G,i-1}|_{\calU^i_{j,k}}\circ \eta^i_{j,k}(x,t)= p_2\circ(\eta^i_{j,k+1})^{-1}\circ\phi_{\G,i-1}|_{\calU^i_{j,k}}\circ \eta^i_{j,k}(x,t')$$ for every $t,t'\in I$ where $p_2:\MS^1 \times I\to \MS^1$ is the canonical projection. 

Now we consider $\widetilde{\phi}_{\G,i}:=\calD_{\delta_{i}}\circ\phi_{\G,i-1}$ as in (\ref{eq:notation_mixedpartial}). 
The curves $\calC^i_{j,k}=\eta^i_{j,k}(\MS^1 \times \{0\})$ are invariant by $\widetilde{\phi}_{\G,i}$. These curves separate $\Si_{\G^i}$ in two pieces: $\calU^i$ and its complementary that we call $\calB$. After quotienting by $g_{i}$,  we get a $\phi_{\G,i}$-invariant piece that is $g_i(\calB)\approx \calB$ and another one that is $g_i(\calU^i)$. The restriction of $\phi_{\G,i}$ to $g_i(\calB)$ is conjugate to $\phi_{\G,i-1}$ and then pseudo-periodic. The restriction to $g_i(\calU^i)$ has an invariant spine that is $g_i(\widetilde{\G}^i)$ and then, by  \Cref{lem:invariant}, it is boundary-free isotopic to a periodic one. Then, we have seen that $\phi_{\G,i}$ is pseudo-periodic.    
\end{proof}

\begin{remark} The screw number associated to the orbit
of an invariant annuli at the curve $\calC^i_{j,k}$ that appears in the previous proof is 
\begin{equation}\label{eq:sc}s(\calC^i_{j,k})=-\sum_k \delta^1_{j,k} / l(\TG^1_{j,1}).\end{equation}

We check this by giving a more elaborated construction that the one in the proof of \Cref{theo:tat_qp} from which the computation of the screw number follows easily. 

%\begin{proof}[Alternative construction of \Cref{theo:tat_qp}]We see how a mixed \tat graph gives in a natural way, a system of curves $\calC$ that satisfies $(1)$ and $(2)$ of \ref{def:pseudo} but is not minimal. We also construct a tubular neighborhood $\calA$ of $\calC$ for by annuli satisfying $(1)$ and $(2)$ of \ref{theo:canonical} for a homeomorphism in the mapping class of $\phi_G$. In particular, this shows that the mixed \tat homeomorphism $\phi_G$ is pseudo-periodic. We do this construction for a mixed \tat graph and not for a relative mixed \tat graph for simplicity of notation. 
 
% We construct the set $\calC$ inductively on the depth of the mixed \tat graph. Let $(\G^\bullet, \delta_\bullet)$ be a mixed \tat graph. Then the homeomorphism $\phi_{\G,0}: \Si_{\G^1} \rightarrow \Si_{\G^1}$ is periodic, so in particular it is pseudo-periodic and the sets $\calC$ and $\calA$ are empty. We do the next step of the construction.

Let $\TG^i_{j,1}, \ldots, \TG^i_{j,\alpha_j}$ be a set of boundary components of $\Si_{\G^i}$ contained in $\TG^i$ and cyclically permuted by $\phi_{\G,i-1}$. Let $\calA^i_{j,k}$ annular neighbourhoods of them as in the previous proof such that $\phi_{\G,i-1}(\calU^i_{j,k})=\calU^i_{j,k+1}$. 

Since $\phi_{\G,i-1}^\alpha|_{\calU^i_{j,1}}$ is periodic, we can choose coordinates/parametrization $r_{j,1}^i$ from $\MS^1\times[0,1]$ (with $\MS^1\approx\R/\Z$ of total length 1) to $\calU^i_{j,1}$ with respect to which $\phi_{\G,i-1}^\alpha|_{\calU^i_{j,1}}$ is a rotation of the annulus, that is 
$$(r^i_{j,1})^{-1}\circ \phi^{\alpha_j}_{\G,i-1}\circ r^i_{j, 1}(x,t)=(x+\tau_{i-1},t),$$ where $\tau_{i-1}$ is the rotation number  $rot(\phi_{\G,i-1}^{\alpha_j}|_{\TG^i_{j,i}})$  (note that $\tau_{i-1}$ depends on the considered orbit of annuli, not only on $i$). 
Assume $\TG^i_{j,k}=r^i_{j,k}(\MS^1\times\{1\})$. 
Let $l(\TG^i_{j,k})$ be the length of $\TG^i_{j,k}$. We assume without loss of generality that $r^i_{j,k}|_{\MS^1\times\{1\}}$ is a homotethy of ratio $l(\TG^i_{j,k})$ onto $\TG^i_{j,k}$.

%For each $k=1, \ldots, \alpha_j$, let $U^i_{j,k}$ be a collar neighborhood of $\TG^{i}_{j,k}$ such that $\phi_{\G,i-1}(U^i_{j,k})=U^i_{j,k+1}$. 
%Let $r^i_{j,1}: \MS^1 \times I \rightarrow \calA^i_{j,1}$ be a parametrization seeing as a product structure such that $\phi^{\alpha_j}$ sends retractions lines to retractions lines and such that $r^i_{j,1}|_{\MS^1 \times \{1\}}$ is an isometry onto $\TG^i_{j,1}$. 
Define $r^i_{j,k}:=\phi_{\G,0}^{k-1} \circ r^i_{j,1}$ for $k=2, \ldots, \alpha_j-1$. %So by construction we have that $\phi_{\G,0}(r^1_{j,k}(x,t))=r^i_{j,k+1}(x',t)$, that is, $\phi_ {\G,i-1}$ preserves also the circles $\MS^1 \times \{t\}$ of the product structures. %Also note that all these circles have a metric induced from the metric of $\TG^i_{j,k}$ and the product structure.

Define now $\calA^i_{j,k}:= r^i_{j,k}(\MS^1 \times [0, 1/2])$ and $\calC^i_{j,k}:= r^i_{j,k}(\MS^1 \times \{1/4\})$ (note that this system of curves $\calC^i_{j,k}$ is different from that of the previous proof but isotopic to it). We choose a representative of the boundary Dehn twists $\calD_{\delta_i}$ (defined up to isotopy fixing the boundary) performing \textit{all the twisting} in the annuli $\calA^i_{j,k}$. More precisely we can assume $\calD_{\delta_i}$ is expressed on $\calU^i_{j,k}$ in the coordinates $r^i_{j,k}$ as follows:
\begin{equation}\nonumber
(r^i_{j,k+1})^{-1}\circ \calD_{\delta_i}\circ r^i_{j, k}(x,t) := 
\left\{ \begin{array}{lll} 
(x+2\cdot t\cdot \delta^i_{j,k}/l(\TG^i_{j,k}), t) &\mbox{if}& 0 \leq t \leq 1/2 \vspace*{0.2cm} \\ 
(x + \delta^i_{j,k}/l(\TG^i_{j,k}), t)& \mbox{if} & 1/2 \leq t \leq 1
\end{array}
\right.
\end{equation}
 where we denote by $\delta^i_{j,k}:= \delta_i(g_i(\TG^i_{j,k}))$ with $g_i:\Si_{\G^i} \rightarrow \Si_{\G^{i+1}}$ the gluing function.

Then, we have that $\tilde{\phi}_{\G,i} :=\calD_{\delta_i} \circ \phi_{\G,i-1}$ satisfies that 

\begin{equation}\label{eq:eq}
(r^i_{j,1})^{-1}\circ \tilde\phi_{\G,i}^{\alpha_j}\circ r^i_{j, 1}(x,t) =\left\{ \begin{array}{lll} 
\left(x + \tau_{i-1}+2\cdot t\cdot \sum_k\delta^i_{j,k}/l(\TG^i_{j,1}) , t \right) &\mbox{if}& 0 \leq t \leq 1/2 \vspace*{0.2cm} \\ 
\left(x + \tau_{i-1}+ \sum_k\delta^i_{j,k}/l(\TG^i_{j,1}) , t\right)& \mbox{if} & 1/2 \leq t \leq 1
\end{array}
\right.
\end{equation}
where $\tau_{i-1}=rot(\phi_{\G,i-1}^{\alpha_j}|_{\TG^i_{j,i}})$ is the rotation number of $\phi_{\G,i-1}^{\alpha_j}$ at  $\TG^i_{j,1}$. Recall that this rotation number is computed orienting $\TG^{i}_{j,1}$ with the orientation induced by relative safe walks, that is, the opposite as the  orientation that it inherits as boundary component of $\Si_{\G^i}$. 

From the expression in (\ref{eq:eq}) we see that the restriction of $\tilde{\phi}^{\alpha_j}_{\G,i}$ to $\calA^i_{j,k}$ is conjugated to $\calD_{s,\tau_{i-1}}$ (note that we have to reparametrize $t=t/2$ in order to get the expression of $\calD_{s,\tau_{i-1}}$ which is defined in $\MS^1\times [0,1]$) where $s$ is the opposite of the expression in (\ref{eq:sc}) . Applying \Cref{def:screw_number}, we get that the screw number at $\calA^i_{j,k}$ are given exactly by by the expression (\ref{eq:sc}).

By \Cref{def:screw_number} (see also \Cref{re:screw_number}) this computes the screw number at $\calC^i_{j,k}$ since the mapping $\widetilde{\phi}_{\G,i}$, after quotienting by $g_i$, goes down to a  homeomorphism that is periodic restricted to $g_i(\calU^i_j\setminus \calA_j^i)$ where $\calU^i_j=\cup_k\calU^i_{j,k}$ and $\calA^i_j=\cup_k\calA^i_{j,k}$,  and to a pseudo-periodic homeomorphism of $\Sigma_{\G^i}\setminus \calU^i\cont\Sigma_{\G^{i+1}}$ in a quasi-canonical form (see \Cref{re:stan}).

%Since $\tilde\phi_{\G,i}$ is compatible with the gluing $g_i$, then $\sum_k \delta^1_{j,k} / l(\TG^i_{j,1}) \in \Q$ so the homeomorphism $\phi_{\G,i}|_{\bigcup r^i_{j,k}(\MS^ 1 \times [3/4, 1])}$ is periodic. %So $\phi_{\G,i}$ is pseudo-periodic. The construction is exactly the same for the next levels of the filtration.
\end{remark}

\begin{remark}\label{re:conj_mixed}
Given a relative mixed \tat graph $(\G^{\bullet}, A^{\bullet}, \delta_{\bullet})$ and given two different 
embeddings of $(\G,A)$ into the surface $\Sigma$, 
the induced mixed \tat twists are conjugated by the same homeomorphism that relates the two  embeddings.
\end{remark}

%\begin{remark}\label{rem:isometry_bound}
%We make the following observation that will be used in \Cref{thm:mixed_model}. Let $(\G^{\bullet}, A^{\bullet}, \delta_{\bullet})$ be a relative mixed \tat graph.
%
%For any choice of product structures on $(\Si_{\G^1})_{\G_{\G^1}}$, the relative \tat homeomorphism $\phi_{\G,0}:\Si_{\G^1} \to \Si_{\G^1}$ is, by definition, an isometry restricted to $\TG^1 \cup (A^0 \setminus A^1).$ Recall that in the construction, for simplicity of notation, we assumed $A=\emptyset$ but it is not always the case. 
%
%Observe now that $\calD_{\delta_1}|_{\TG^1}$ is also an isometry by definition of boundary Dehn twist. Also $\calD_{\delta_1}|_{A^0 \setminus A^1}=id$. We can conclude that the induced $\phi_{\G,1}|_{A^0 \setminus A^2}$ is an isometry. The same argument extends to the rest of the filtered graph, so we conclude that for any choice of product structure, the mixed \tat twist $\phi_\G$ satisfies that its restriction to the relative boundaries $\phi_\G|_{A}$ is an isometry.
%\end{remark}

\subsection{A realization theorem. }\label{sec:final}
It is straightforward that the filtered metric graph described at the end of \Cref{sec:restricted} is a mixed relative \tat graph with the given $\delta$'s. The construction given in that section proves the following: 

\begin{theorem}\label{thm:mixed_model}
Let $\phi$ be a pseudo-periodic homeomorphism %in canonical form 
satisfying assumptions (1)-(3) at the beginning of \Cref{sec:restricted}. Let $\partial^1\Si$ be a union of some, at least one, connected components of $\partial\Si$ that:
\begin{itemize}
\item  are fixed pointwise by $\phi$, 
\item are contained in a single connected component of $\Si \setminus \calC$ and 
\item have positive fractional Dehn twist coefficient.
\end{itemize}
%Then, for each choice of a connected component in $\Si \setminus \calA$  that contains a boundary component of $\Si$ fixed pointwise by $\phi$, 
%Assume $\phi$ is in the quasi-canonical form of \Cref{re:stan} with respect to $\partial^1\Si$.  
Then, there exists a mixed \tat graph $(\G^\bullet,A^\bullet,\delta_\bullet)$  with $A^0=\partial\Si\setminus\partial^1\Si$ and an embedding $\G \hookrightarrow \Si$ such that:

$$[\phi_{(\Sigma,\G)}]_{\partial\Si,\phi|_{\partial\Si}}= [\phi]_{\partial\Si,\phi|_{\partial\Si}}.$$

\end{theorem}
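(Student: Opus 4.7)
The plan is to build the filtered metric ribbon graph $(\G^\bullet, A^\bullet,\delta_\bullet)$ by induction on distance from a root vertex of the tree $G(\phi,\Si)$. Since $\phi$ is in quasi-canonical form and $G(\phi,\Si)$ is a tree, I root it at the vertex $v_0$ corresponding to the component $\Si_0$ of $\Si\setminus\calA$ containing $\partial^1\Si$. Every vertex then acquires a well-defined depth, as does every edge (= cut curve). Let $\Si^{\geq i}$ denote the $\phi$-invariant subsurface consisting of components at depth $\geq i$ together with annular neighborhoods of the cut curves at depth $\geq i$; let $d$ be the height of the tree. On each orbit of depth-$i$ components the restriction of $\phi$ (off the collars of its boundary cut curves) is periodic by the canonical-form property, so Theorem~\ref{thm:rel_signed} applies piece by piece.

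For the construction I descend from depth $d$ outward. At depth $d$ apply Theorem~\ref{thm:rel_signed} to each orbit of leaf components, obtaining signed relative \tat spines whose relative boundary $A^d$ consists only of the components of $B$ lying in these leaves (the cut curves at depth $d$ are kept \emph{interior} in the ambient surface and will, after cutting along $\G^d$, reappear as components of $\widetilde{\G}^d$). Take $\G^d$ to be the union of these spines. Inductively, given $\G^{i+1}$, for each orbit of depth-$i$ components realize its periodic restriction by a signed relative \tat spine via Theorem~\ref{thm:rel_signed}, arranging (using the freedom granted by Lemma~\ref{lem:assum}) that the new edges attach to $\G^{i+1}$ precisely at the points where the annular neighborhoods of depth-$(i+1)$ cut curves meet the new component's interior; set $\G^i:=\G^{i+1}\cup(\text{new spines})$. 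Define $\delta_0$ by the safe-walk length giving the correct rotation numbers on $\partial^1\Si$ and on $A^0\setminus A^1$ as in the proof of Theorem~\ref{thm:000}; recursively, on each connected component of $\G^i$ (for $i\geq 1$) set $\delta_i$ to the unique positive rational value such that
\begin{equation}\label{eq:deltachoice}
\sum_{k}\delta^{\,i}_{j,k}/\ell(\widetilde{\G}^i_{j,1}) \;=\; -s(\calC^{\,i}_{j,k}),
\end{equation}
which is meaningful (and yields $\delta_i\geq 0$) precisely because by assumption (2) every screw number is non-positive. The positivity of the fractional Dehn twist coefficient on $\partial^1\Si$ in assumption (3'') is what ensures that $\iota\equiv +$ works at depth $0$, so no negative sign is needed.

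The mixed \tat property now follows by verifying the equivalent condition I') of Remark~\ref{rem:equiv_mixed_tat}: at each stage $\widetilde{\phi}_{\G,i}=\calD_{\delta_i}\circ\phi_{\G,i-1}$ is compatible with $g_i$ because, by the inductive construction, $\phi_{\G,i-1}$ is exactly the signed \tat twist of the depth-$\geq i$ spines and $\calD_{\delta_i}$ supplies the missing boundary rotation along $\widetilde{\G}^i$ required for the depth-$i$ spines to match it through $g_i$. Property III) for the components of $A^\bullet$ is automatic from Theorem~\ref{thm:rel_signed} at each level. The isotopy-class identifications then follow: on each component of $\Si\setminus\calA$ the restriction of $\phi_{(\Si,\G)}$ coincides on a spine with the periodic representative of $\phi$ by Theorem~\ref{thm:rel_signed}(ii), and at each cut-curve annulus the screw number of $\phi_{(\Si,\G)}$ equals that of $\phi$ by formula (\ref{eq:sc}) together with (\ref{eq:deltachoice}); assembling these data by Corollary~\ref{cor:iso_rot} applied on each piece gives $[\phi_{(\Si,\G)}]_{\partial^1\Si}=[\phi]_{\partial^1\Si}$. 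For the strengthening to $[\phi_{(\Si,\G)}]_{\partial,\phi|_\partial}=[\phi]_{\partial,\phi|_\partial}$, invoke Remark~\ref{rem:isometry_bound} so that $\phi_{(\Si,\G)}|_{A}$ is an isometry matching the periodic action of $\phi|_{A}$ (which is possible because we used Theorem~\ref{thm:rel_signed} fixing the action on $A$).

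The main obstacle I anticipate is the coherent gluing of spines across different depths: at each inductive step the new spine must attach to $\G^{i+1}$ along finitely many prescribed points coming from where depth-$(i+1)$ cut curves meet the depth-$i$ component, and the metric $\delta_i$ on the new part must simultaneously solve the relative \tat equations \emph{at that level} (so that $\phi_{\G,i-1}$ makes sense as a periodic homeomorphism) and the screw-number equations (\ref{eq:deltachoice}). These two sets of constraints are decoupled because $\delta_i$ affects only the boundary Dehn twist inserted at the $i$-th level, while the tat structure at level $i$ is governed by the lengths in $\G^{i}\setminus\G^{i+1}$ and the value $\delta_{i-1}$; but the decoupling must be justified carefully, and the compatibility with prescribed attachment points requires exploiting the freedom in Lemma~\ref{lem:assum} (subdividing edges and rescaling) at each step.
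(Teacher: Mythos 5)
Your overall architecture (rooting the tree $G(\phi,\Si)$ at the piece containing $\partial^1\Si$, realizing periodic pieces by relative \tat structures, and pinning the $\delta_i$'s down through the screw numbers via (\ref{eq:sc})) is the same as the paper's, but the way you produce the metrics on the deeper spines does not work, and this is exactly the point you yourself flag as the ``main obstacle''. The claimed decoupling of constraints is false: in a mixed \tat graph the map induced on a depth-$i$ spine is the safe-walk flow of total length $\delta_0+\delta_1+\cdots+\delta_i$ through the nested subgraphs, and the gluing compatibility (condition I') of \Cref{rem:equiv_mixed_tat}) at level $i$ correlates the length of each cut circle $\TG^{i}_{j,k}$ and the position of the attaching points with the rotation data coming from \emph{all} shallower levels, not only with $\delta_{i-1}$ and the screw numbers. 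Hence you cannot fix the metric of each depth-$i$ piece independently by applying \Cref{thm:rel_signed} to that piece: (i) its hypotheses fail there, since for $i\geq 1$ the first-return map $\phi^{\alpha_j}$ fixes no boundary component pointwise (it rotates $\calC''^{i}_{j,k}$, and $\partial^1\Si$ lies in the root piece), and the pieces of an orbit are permuted, so at best \Cref{thm:rel} applies to one representative and is then transported; (ii) even granting a relative \tat metric on a deep piece, nothing makes its boundary-circle length compatible with the walk lengths and rotation numbers imposed from outside, and in your leaves-inward induction those outer data $\delta_0,\dots,\delta_{i-1}$ are not even known when you fix the deep metric, while rescaling afterwards feeds back into the $\delta_j$'s through the screw-number equations. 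The paper avoids this entirely by working from the root outward and by \emph{never} solving \tat equations on the deep spines: on each deeper piece it takes a $\phi^{\alpha_j}$-invariant spine cut into a single cylinder with an invariant product structure and transports the metric of the outer curve $\calC'^{i+1}_{j,k}$ across the annulus and the cylinder onto $\TG^{i+1}_{j,k}$ (steps (a)--(e)), which makes the cross-level compatibility automatic.

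A second genuine gap is the final identification of isotopy classes. Knowing that the restrictions to the pieces are isotopic to those of $\phi$ and that the screw numbers agree does not by itself yield $[\phi_{(\Sigma,\G)}]_{\partial^1\Si}=[\phi]_{\partial^1\Si}$: \Cref{cor:iso_rot} only compares two homeomorphisms fixing $\partial^1\Si$ pointwise which coincide and are periodic on a common spine, so ``assembling by \Cref{cor:iso_rot} on each piece'' is not an argument for the global pseudo-periodic map; that assembly is essentially the content one would otherwise have to import from the Matsumoto--Montesinos classification. The paper instead builds the isotopy inductively: it compares the lift $\widetilde{\phi}_{i+1}$ of $\phi$ with the trivial extension $\bar{\phi}_i$ of $\phi|_{\Si^{\leq i}}$ across the annuli and cylinders and shows, using the linearization \Cref{lem:qp}, that they differ exactly by the boundary Dehn twists $\calD_{\delta_{i+1}}$, i.e.\ the identity (\ref{eq:b3}); this single identity is what simultaneously proves the compatibility I') (hence that the constructed object is a mixed \tat graph) and the inductive isotopy statement, including the refinement relative to the action on $\partial\Si$. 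Your proposal needs an analogous mechanism; as written, both the existence of the required metric and the passage from piecewise data to the rel-$\partial^1\Si$ isotopy are asserted rather than proved.
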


And we have as a corollary the case of the monodromy of plane branches: 
\begin{corollary}\label{cor:monod}
Let $f: \C^2 \to \C$ be an irreducible polynomial with an isolated singularity at $0$. Let $\Si$ be the corresponding Milnor fiber and let $h:\Si  \to \Si$ be a representative of the monodromy that fixes $\partial \Si$ pointwise. Then, there exists a mixed \tat graph $(\G^{\bullet}, \delta_{\bullet})$ (with no relative boundaries) embedded in $\Sigma$ such that $[\phi_{(\Sigma,\G)}]_{\partial \Si} = [h]_{\partial \Si}$.
\end{corollary}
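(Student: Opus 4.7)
The plan is to apply \Cref{thm:mixed_model} directly to the monodromy $h$. Since $f$ is irreducible, the Milnor fiber $\Si$ has a single boundary component; setting $\partial^1\Si := \partial\Si$ forces $B = \partial\Si \setminus \partial^1\Si = \emptyset$, so the resulting mixed \tat graph will automatically have no relative boundaries, as required by the statement. The only connected component of $\Si\setminus\calC$ adjacent to the boundary automatically contains $\partial\Si$, so that auxiliary hypothesis of \Cref{thm:mixed_model} is trivially satisfied. It remains to verify that $h$ satisfies the assumptions (1), (2), (3') and (3'') listed before \Cref{thm:mixed_model}.

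First, $h$ is pseudo-periodic in the sense of \Cref{def:pseudo}: this is the classical theorem of L\^e--A'Campo on monodromies of plane curve singularities. A minimal subordinated system of cut curves $\calC$ can be read off the minimal good embedded resolution of the singularity: each rupture vertex of the dual resolution graph gives rise to a Seifert-fibered piece of the JSJ decomposition of $\Si$, and the cut curves in $\calC$ correspond to the edges of the dual graph joining such vertices. Since the dual resolution graph of any plane curve singularity is a tree, the incidence graph $G(h,\Si)$ is likewise a tree, which gives assumption (1). Assumption (3') is guaranteed by the hypothesis that $h$ fixes $\partial\Si$ pointwise. Assumption (3''), positivity of the fractional Dehn twist coefficient at the unique boundary component, follows from the classical resolution picture: this coefficient equals $1/N$ where $N$ is the multiplicity of the exceptional divisor met by the strict transform, hence strictly positive.

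The remaining assumption (2) — that all screw numbers are non-positive — is the one genuinely specific to monodromies of holomorphic germs. It is exactly what comes out of the detailed analysis carried out by A'Campo in \cite{Camp2}: at each cut curve coming from an edge $e$ of the resolution graph joining rupture vertices with multiplicities $m$ and $m'$, the corresponding screw number is a negative rational expressible in terms of $m$, $m'$ and the determinant of the intersection matrix along $e$. In particular it is non-positive on every orbit of the induced permutation of $\calC$.

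With assumptions (1)--(3'') established, \Cref{thm:mixed_model} immediately yields a mixed \tat graph $(\G^{\bullet},\delta_{\bullet})$ embedded in $\Si$ with no relative boundaries and such that $[\phi_{(\Si,\G)}]_{\partial\Si} = [h]_{\partial\Si}$, which proves the corollary. The main obstacle is the geometric verification of (2): while the tree property (1) and the positivity at the boundary (3'') follow essentially by inspection of the resolution graph, the non-positivity of \emph{every} screw number requires the full strength of the computations in \cite{Camp2}.
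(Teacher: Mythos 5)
Your proposal is correct and follows essentially the same route as the paper: verify that the monodromy satisfies the hypotheses (pseudo-periodicity, tree structure of $G(h,\Si)$, non-positive screw numbers, single boundary component with positive fractional Dehn twist coefficient) by appealing to A'Campo's description in \cite{Camp2}, and then apply \Cref{thm:mixed_model}. The extra details you supply about the resolution graph only elaborate on what the paper delegates to that citation.
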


\begin{proof}
In \cite{Camp2} it is given a description of the Milnor fiber and the monodromy, which, in particular shows that:
\begin{itemize}
\item[(0)] $h$ is pseudo-periodic,
\item[(1)] $G(h, \Si)$ is a tree, 
\item[(2)] it has  all screw numbers negative,
\item[(3)] the Milnor fiber has $1$ boundary component and the fractional Dehn twist coefficient with respect to it is positive.
\end{itemize}
hence we conclude by \Cref{thm:mixed_model}.
\end{proof}

\begin{figure}%
\includegraphics[width=90mm]{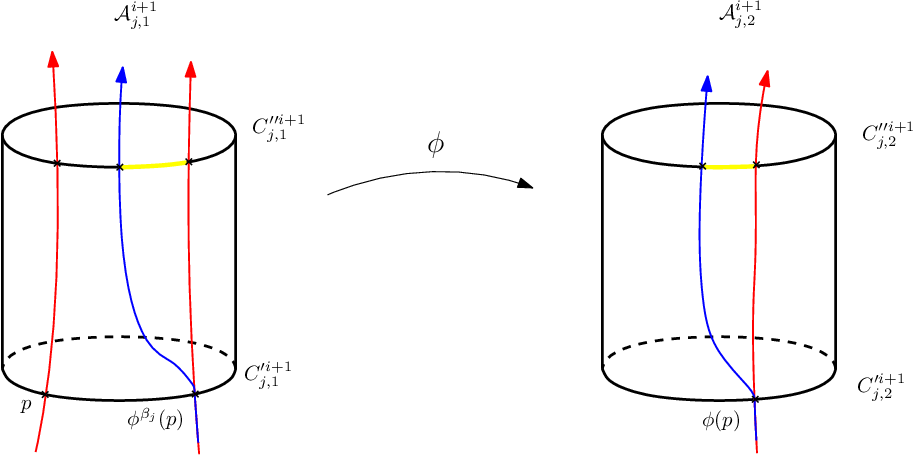}%{m_tat}%
\caption{In red, the edges of $\TG^{\leq i+1}$ passing by a point $p\in \Si^{\leq i}\cap \calA_1$, by $\phi(p)$ and by $\phi^{\beta_{i+1}}(p)$. In black, the image by $\phi_{\G^{i+1}}$ or $\hat{\phi}^{i+1}$ of the first one, and the image through the point $\phi^\beta(p)$ of an edge through the point $\phi^{\beta-1}(p)$. In yellow, a curve in $\calC_1''$ and $\calC_2''$ that has length $-s\cdot length (\calC_1'')$ and $-s/\alpha\cdot length (\calC_1'')$ respectively.} %
\label{fig:m_tat}
\end{figure}
\begin{remark} We could broaden the definition of mixed \tat twist allowing $\delta_i:\G^i\to \R$. In this way, we allow turning in the other direction along the separating annuli. We would find a \emph{signed} mixed \tat twist that would model pseudo-periodic homeomorphisms with positive screw numbers as well.
\end{remark}

%\begin{remark} In \Cref{ex:non_regular2} we show an example of mixed \tat homeomorphism %that does not satisfy the assumptions at the beginning of the section. It is an open %question to characterize the mapping classes modelizable by mixed \tat graphs.
%\end{remark}

%\begin{remark}
%In \ref{sub:mix_sing} we give an explicit mixed \tat graph for any given isolated plane %curve singularity, so we actually re-prove \Cref{cor:monod}. The graph that we produce %there is different from the graph produced by the construction of \Cref{thm:mixed_model}. 
%\end{remark}

\begin{example}\label{ex:example_thm}
Let $\Si$ be the surface of which is  symmetrically embedded in $\R^3$ as in \Cref{fig:ex_grafo} with its boundary component being the unit circle in the $xy$-plane. We follow the notation of the figure. 

Consider the rotation of $\pi$ radians around the $z$-axis. By the symmetric embedding of the surface, it leaves the surface invariant. Isotope the rotation so that it is the identity on $z\leq 0$ and its restriction to $\Si$ has fractional Dehn twist coefficient $1/2$ at the only boundary component. We denote the homeomorphism after the isotopy by $R_{\pi}$.

Let $D_i$ be a full positive Dehn twist on $\calA^{1}_{1,k}$, $k=1,2$. We define the homeomorphism \begin{equation}\label{eq:comp2}\phi:= D_2 \circ D_1^{-2} \circ R_{\pi}|_{\Si}\end{equation}

It is clear that $\phi$ is a pseudo-periodic  homeomorphism and it is already in quasi-canonical form of \Cref{re:stan}. We consider the decomposition of $\Sigma$ adapted to $\phi$ as in Figure \ref{fig:ex_grafo}. In particular we have a single annuli orbit $\calA^1$ and $s(\calA^1)=-1$.

One can  follow the proof of  \Cref{thm:mixed_model} to find a mixed \tat graph embedded in $\Si$ that models $\phi$. In  \Cref{fig:ex_grafo} you can find the abstract mixed \tat graph for $\delta_0=\pi$, $\delta_1=\pi/18$ which  determines the  homeomorphism only up to conjugacy (see \Cref{re:conj_mixed}).    
In \Cref{fig:ex_sur} you can find the embedded graph one obtains following the proof of \Cref{thm:mixed_model} which  determines also the isotopy classes $[\phi]_{\partial,\phi|_\partial}$ or $[\phi]$.

\begin{figure}[ht]
\centering
\includegraphics[scale=0.7]{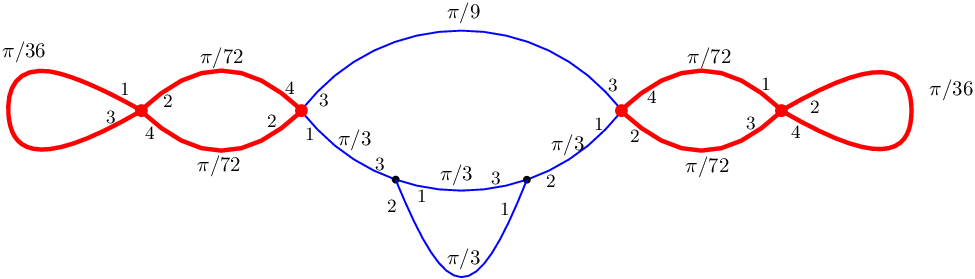}%{ex_grafo}
\caption{Mixed \tat graph with $\G_1$ in red.}
\label{fig:ex_grafo}
\end{figure}

\begin{figure}[ht]
\centering
\includegraphics[scale=0.5]{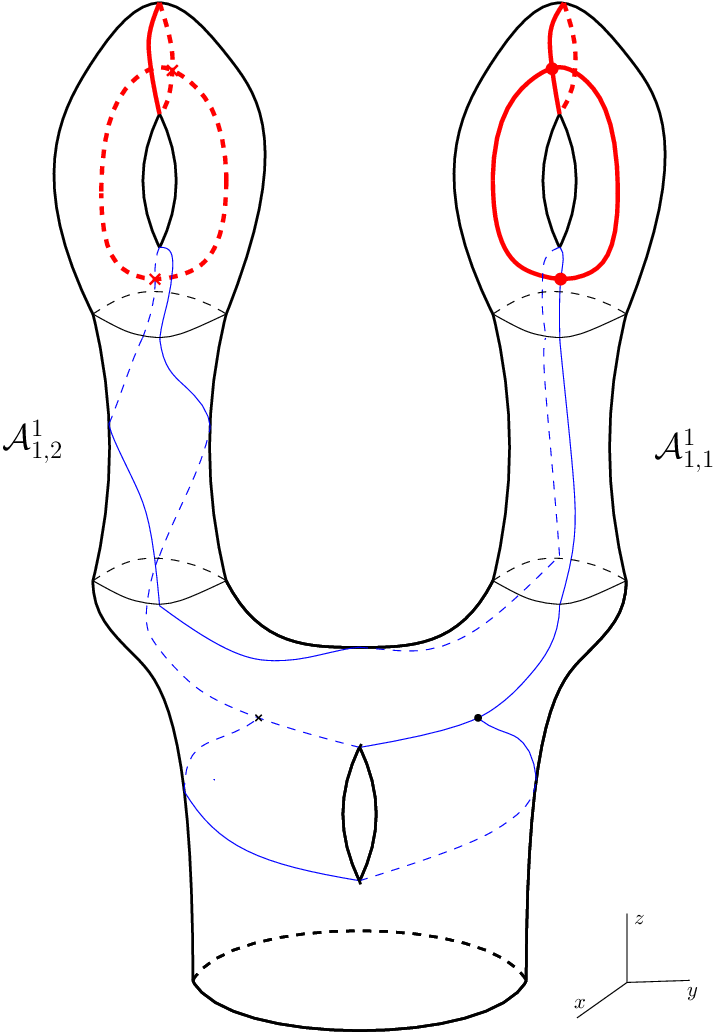}%{example_thm2}
\caption{We can see the surface $\Sigma$ of genus $3$ and $1$ boundary component. In red and blue we see the embedded graph $\G$ and in red we see $\G^1$.}%We also see indicated the two annuli that form the orbit $\calA^1$. The portion of the graph on the left annuli is twisted (following the construcion of  \Cref{thm:mixed_model}) to get that $\phi_G$ is isotopic to $D_2$ in that annuli

\label{fig:ex_sur}
\end{figure}

\end{example}

Note that in the key example in \Cref{sec:restricted}, the adjacency graph of 
the connected pieces of the subsurfaces where the restriction of the 
homeomorphims is periodic, is a tree. This was assumtion (1) in that section. 
In the following example we show a pseudo-periodic homeomorphism not satifying 
this restriction, that is a mixed \tat twist. As we have remarked at the 
introduction, in the sequel article \cite{Bal}, it is proved that this 
restriction is superfluous, and that mixed \tat twist is a much more general 
class.  

\begin{example}\label{ex:non_regular2} This is an example of mixed \tat twist that does not satisfy the assumption (1) in page \pageref{page:assump} at the beginning of the section.

It is induced by a mixed \tat graph with a filtration $\G \supset \G^1$. The thickening of $\G$ is a surface of genus $3$ and $1$ boundary component. 

We obtain the graph $\G$ as follows. We take the relative \tat graph of the first figure in \Cref{fig:ex2} that induces the rotation of the disk with 3 more boundary components. This will correspond to $\G_{\G^1}$. We also consider a graph $K_{3,3}$ with edges of length $\pi/12$. Note that the thickening of $K_{3,3}$ is a torus with 3 boundary components and that this metric $K_{3,3}$ is a \tat graph for $\pi/6$. 

Now, we identify the three relative 
components of $\G_{\G^1}$ with $\widetilde{K}_{3,3}$ which is obtained by cutting the graph $K_{3,3}$ along itself (and consists in three $\MS^1$). After the identification we quotient by the quotient map $\widetilde{K}_{3,3}\to K_{3,3}$ to obtain a graph $\G$ as the third one in \Cref{fig:ex2}. 
It is a mixed \tat graph for $\delta_0=\pi$ and $\delta_1=\pi/6$ with the embedding of $K_{3,3}$ as the level 1 subgraph $\G^1$.

It can be easily checked that the induced homeomorphism, that we denote by $\phi$, has restriction to $\Si^1_{1,1}$  isotopic to the rotation of \Cref{fig:ex2}, restriction to $\Si^{2}_{1,1}$ permuting the boundary components in the same way and the screw number of the annuli equals  $-1$.

\begin{figure}[!h]
\centering
\includegraphics[scale=0.65]{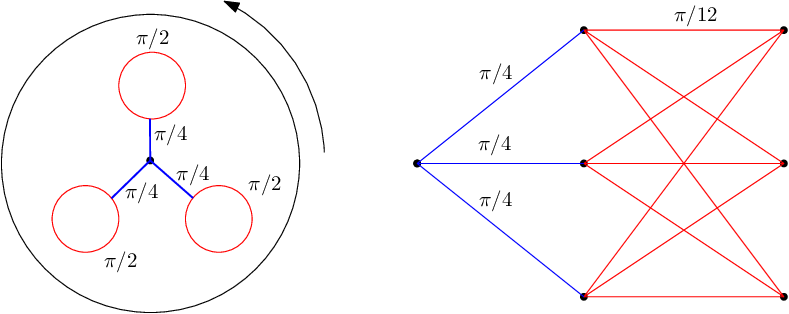}%{ex_grafitos}
\caption{On the left we see $\Si_{\G^1}$, in blue and red we see $\G_{\G^1}$ and in red we see $\TG^1$. On the right, we see a projection on the plane of the mixed \tat graph for $\delta_0=\pi$ and $\delta_1=\pi/6$ with $\G^1$ in red (note that $\G^1$ is $K_{3,3}$ with each edge of length $\pi/12$. If no cyclic order is indicated, then consider the counterclockwise ordering induced by the orientation of the plane.}
\label{fig:ex2}
\end{figure}

\begin{figure}[H]
\centering
\includegraphics[scale=0.55]{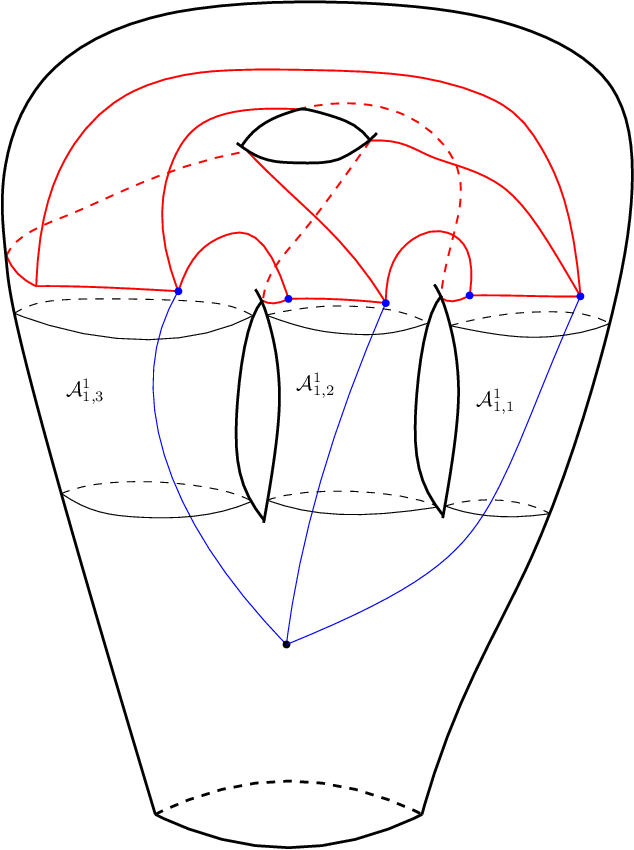}%{ex_grafitos2}
\caption{The decomposition of $\Si$ adapted to $\phi$. The embedding of the mixed \tat graph for $\delta_0=\pi$ and $\delta_1=\pi/6$ with $\G^1$ in red. } 
\label{fig:ex2_sur}
\end{figure}
\end{example}

\bibliographystyle{alpha}
\bibliography{bibliography}

\end{document}